\newcommand\bSI[1]{{\small[\SI{}{#1}]}}
\newlength\unitwdth
\newlength\numwdth
\newlength\tdima
\newcommand\SIdescr[2]{%
    \setlength\tdima{\linewidth}%
    \addtolength\tdima{\@totalleftmargin}%
    \addtolength\tdima{-\dimen\@curtab}%
    \addtolength\tdima{-\unitwdth}%
    \addtolength\tdima{-\numwdth}%
    \parbox[t]{\tdima}{%
        #1
        \leaders\hbox{$\m@th\mkern \@dotsep mu\hbox{\tiny.}\mkern \@dotsep mu$}%
        \hfill
        \ifhmode\strut\fi
        \makebox[0pt][l]{%
            \makebox[\unitwdth][l]{}%
            \makebox[\numwdth][r]{#2}}}}
\newcommand{\Ycal}{\mathcal{Y}}
\newcommand{\Hil}{\mathcal{H}}
\newcommand{\Z}{\mathbb{Z}}
\newcommand{\N}{\mathbb{N}}
\newcommand{\R}{\mathbb{R}}
\newcommand{\epsilontilde}{{\tilde{\epsilon}}}
\newcommand{\epsilonhat}{{\hat{\epsilon}}}
\newcommand{\Realization}{\mathrm{R}}
\newcommand{\mult}{\Phi^{Z,d,n,l}_{\mathrm{mult};\tilde{\epsilon}}}
\newcommand{\vect}{\mathbf{vec}}
\newcommand{\Paral}{\mathrm{P}}
\newcommand{\identity}{\mathbf{Id}}
\newcommand{\conc}{{\raisebox{2pt}{\tiny\newmoon} \,}}
\newtheorem{theorem}{Theorem}[section]
\newtheorem*{theorem*}{Theorem}
\newtheorem{remark}[theorem]{Remark}
\newtheorem{definition}[theorem]{Definition}
\newtheorem{proposition}[theorem]{Proposition}
\newtheorem{lemma}[theorem]{Lemma}
\newtheorem{corollary}[theorem]{Corollary}
\newtheorem{assumption}[theorem]{Assumption}
\newtheorem*{remark*}{Remark}
\newtheorem*{proposition*}{Proposition}
\numberwithin{equation}{section}
\definecolor{darkgreen}{rgb}{0.0, 0.5, 0.0}
\definecolor{darkcandyapplered}{rgb}{0.64, 0.0, 0.0}
\title{A Theoretical Analysis of Deep Neural Networks and \\ Parametric PDEs}
\author{Gitta Kutyniok \thanks{Institut für Mathematik, Technische Universit\"at Berlin, Straße des 17.~Juni 136, 10623 Berlin, Germany, e-mail: \texttt{$\{$kutyniok, raslan, schneidr$\}$@math.tu-berlin.de}} \thanks{Fakult\"at für Informatik und Elektrotechnik, Technische Universit\"at Berlin}
\thanks{
Department of Physics and Technology, University of Troms\o} \and Philipp Petersen\thanks{University of Vienna, Faculty of Mathematics and Research Plattform Data Science @ Uni Vienna, Oskar Morgenstern Platz 1,
1090 Wien, e-mail: \texttt{philipp.petersen@univie.ac.at}} 
\and Mones Raslan\footnotemark[1] 
\and Reinhold Schneider\footnotemark[1]}
\begin{document}

\maketitle

\begin{abstract}
We derive upper bounds on the complexity of ReLU neural networks approximating the solution maps of parametric partial differential equations. In particular, without any knowledge of its concrete shape, we use the inherent low-dimensionality of the solution manifold to obtain approximation rates which are significantly superior to those provided by classical neural network approximation results. Concretely, we use the existence of a small reduced basis to construct, for a large variety of parametric partial differential equations, neural networks that yield approximations of the parametric solution maps in such a way that the sizes of these networks essentially only depend on the size of the reduced basis.
\end{abstract}
\vspace{.2cm}

\noindent
\textbf{Keywords:} deep neural networks, parametric PDEs, approximation rates, reduced basis method

\noindent
\textbf{Mathematical Subject Classification:} 
 35A35% Theoretical approximation to solutions
, 35J99%Elliptic PDEs (none of the above, but in this section),
, 41A25% Rate Of Convergence
, 41A46% nonlinear approximation
, 68T05%	Learning and adaptive systems
, 65N30% Galerkin Methods

\section{Introduction}

In this work, we analyze the suitability of deep neural networks (DNNs) for the numerical solution of parametric problems. Such problems connect a parameter space with a solution state space via a so-called \emph{parametric map}, \cite{ohlberger2015reduced}. 
One special case of such a parametric problem arises when the parametric map results from solving a partial differential equation (PDE) and the parameters describe physical or geometrical constraints of the PDE such as, for example, the shape of the physical domain, boundary conditions, or a source term. 
Applications that lead to these problems include modeling unsteady and steady
heat and mass transfer, acoustics, fluid mechanics, or electromagnetics, \cite{CertReduced}.

Solving a parametric PDE for every point in the parameter space of interest individually, typically leads to two types of problems. 
First, if the number of parameters of interest is excessive---a scenario coined many-query application---then the associated computational complexity could be unreasonably high. 
Second, if the computation time is severely limited, such as in real-time applications, then solving even a single PDE might be too costly. 

A core assumption to overcome the two issues outlined above is that the solution manifold, i.e., the set of all admissible solutions associated with the parameter space, is inherently low-dimensional. This assumption forms the foundation for the so-called reduced basis method (RBM). A reduced basis discretization is then a (Galerkin) projection on a low-dimensional approximation space that is built from snapshots of the parametrically induced manifold, \cite{RozzaPateraRB}. 

Constructing the low-dimensional approximation spaces is typically computationally expensive because it involves solving the PDEs for multiple instances of parameters. 
These computations take place in a so-called \emph{offline} phase---a step of pre-computation, where one assumes to have access to sufficiently powerful computational resources. 
Once a suitable low-dimensional space is found, the cost of solving the associated PDEs for a new parameter value is significantly reduced and can be performed quickly and \emph{online}, i.e., with limited resources, \cite{BALMES1996381, prud2002reduced}. 
We will give a more thorough introduction to RBMs in Section \ref{sec:RedBase}. 
An extensive survey of works on RBMs, which can be traced back to the seventies and eighties of the last century (see for instance \cite{FoxMiura,NoorI,NoorII}), is beyond the scope of this paper. 
We refer, for example, to \cite[Chapter 1.1]{CertReduced}, \cite{QuarteroniIntro,DahmenSampleSolution,HaasdonkIntro} 
and \cite[Chapter 1.9]{CohenDeVoreHighPDE} for (historical) studies of this topic.

In this work, we show that the low-dimensionality of the solution manifold also enables an efficient approximation of the parametric map by DNNs. In this context, the RBM will be, first and foremost, a tool to model this low-dimensionality by acting as a blueprint for the construction of the DNNs.

\subsection{Statistical Learning Problems}

The motivation to study the approximability of parametric maps by DNNs stems from the following similarities between parametric problems and \emph{statistical learning problems}:
Assume that we are given a \emph{domain set} $X\subset \R^n$, $n \in \N$ and a \emph{label set} $Y \subset \R^k$, $k \in \N$. 
Further assume that there exists an unknown probability distribution $\rho$ on $X \times Y$. 

Given a \emph{loss function} $\mathcal{L} \colon Y \times Y\to \R^+$, the goal of a statistical learning problem is to find a function $f$, 
which we will call \emph{prediction rule}, from a hypothesis class $H \subset \{h\colon X \to Y\}$ such that the \emph{expected loss} $\mathbb{E}_{(\mathbf{x},\mathbf{y})\sim \rho}\mathcal{L}(f(\mathbf{x}), \mathbf{y})$ is minimized, \cite{Cucker02onthe}. 
Since the probability measure $\rho$ is unknown, we have no direct access to the expected loss.
Instead, we assume that we are given a set of training data, i.e. pairs $(\mathbf{x}_i, \mathbf{y}_i)_{i = 1}^N$, $N \in \N$, 
which were drawn independently with respect to $\rho$. Then one finds $f$ by minimizing the so-called \emph{empirical loss} 
\begin{align}\label{eq:EmpLoss}
    \sum_{i = 1}^N \mathcal{L}(f(\mathbf{x}_i), \mathbf{y}_i)
\end{align}
over $H$. We will call optimizing the empirical loss the \emph{learning procedure}.

In view of PDEs, the approach proposed above can be rephrased in the following way. 
We are aiming to produce a function from a parameter set to a state space based on a few snapshots only. 
This function should satisfy the involved PDEs as precisely as possible, 
and the evaluation of this function should be very efficient even though the construction of it can potentially be computationally expensive. 

In the above-described sense, a parametric PDE problem almost perfectly matches the definition of a statistical learning problem. 
Indeed, the PDEs and the metric on the state space correspond to a (deterministic) distribution $\rho$ and a loss function. 
Moreover, the snapshots are construed as the training data, and the offline phase mirrors the learning procedure. Finally, the parametric map is the prediction rule. 

One of the most efficient learning methods nowadays is deep learning. 
This method describes a range of learning procedures to solve statistical learning problems where the hypothesis class $H$ is taken to be a set of DNNs, \cite{LeCun2015DeepLearning, Goodfellow-et-al-2016}. 
These methods outperform virtually all classical machine learning techniques in sufficiently complicated tasks from speech recognition to image classification. Strikingly, training DNNs is a computationally very demanding task that is usually performed on highly parallelized machines. 
Once a DNN is fully trained, however, its application to a given input is orders of magnitudes faster than the training process. 
This observation again reflects the offline-online phase distinction that is common in RBM approaches.

Based on the overwhelming success of these techniques and the apparent similarities of learning problems and parametric problems it appears natural to apply methods from deep learning to statistical learning problems in the sense of (partly) replacing the parameter-dependent map by a DNN.
Very successful advances in this direction have been reported in \cite{Khoo, RBNonlinearProblems, lee2018model, yang2018physics, raissi2018deep, DalSantoParametric}.

\subsection{Our Contribution}

In the applications \cite{Khoo, RBNonlinearProblems,  lee2018model, yang2018physics, raissi2018deep, DalSantoParametric} mentioned above, 
the combination of DNNs and parametric problems seems to be remarkably efficient.
In this paper, we present a theoretical justification of this approach. 
We address the question to what extent the hypothesis class of DNNs is sufficiently broad to approximately and efficiently represent the associated parametric maps. 
Concretely, we aim at understanding the necessary number of parameters of DNNs required to allow a sufficiently accurate approximation. We will demonstrate that depending on the target accuracy the required number of parameters of DNNs essentially only scales with the intrinsic dimension of the solution manifold, in particular, according to its Kolmogorov $N$-widths. 
We outline our results in Subsection \ref{sec:TwoResults}. 
Then, we present a simplified exposition of our argument leading to the main results in Subsection \ref{sec:SimplifiedOverView}.

\subsubsection{Approximation Theoretical Results}\label{sec:TwoResults}

The main contributions of this work is given by an approximation result with DNNs based on ReLU activation functions. Here, we aim to learn a variation of the parametric map 
$$
    \mathcal{Y} \ni y \mapsto u_y \in \Hil,
$$
where $\mathcal{Y}$ is the parameter space and $\Hil$ is a Hilbert space. In our case, the parameter space will be a compact subset of $\R^p$ for some fixed, but possibly large $p\in \N,$ i.e., we consider the case of finitely supported parameter vectors.

We assume that there exists a basis of a high-fidelity discretization of $\Hil$ which may potentially be quite large.
Let $\mathbf{u}_y$ be the coefficient vector of $u_y$ with respect to the high-fidelity discretization. 
Moreover, we assume that there exists a RB approximating $u_y$ sufficiently accurately for every $y\in \mathcal{Y}$. 

Theorem \ref{thm:NNCoefficientApproximation} then states that, under some technical assumptions, there exists a DNN that approximates the \emph{discretized solution map}  
$$
    \mathcal{Y} \ni y \mapsto \mathbf{u}_y
$$
up to a uniform error of $\epsilon>0$, while having a size that is polylogarithmical in $\epsilon$, cubic in the size of the reduced basis, and at most linear in the size of the high-fidelity basis.

\begin{comment}
\item \textit{Approximation of the continuous solution:} 
Next, we study the approximation of the non-discretized parametric map. 
To be able to make any approximation theoretical statements, we assume that $\Hil$ is a Hilbert space of functions. Concretely, we assume $\Hil \subset \{h\colon \Omega \to \R^k\}$, $\Omega \subset \R^n$, $n,k\in \N$.
Under the assumption that there exists a RB the elements of which can be well approximated by DNNs, we demonstrate in Theorem \ref{thm:ApproxofU} that there exists a DNN approximating 
$$
\mathcal{Y}\times \Omega \ni (y,\mathbf{x}) \mapsto u_y(\mathbf{x})
$$
up to a uniform error of $\epsilon>0$ and the size of which depends polylogarithmically on $\epsilon$, whereas the dependence on the size of the RB is cubic.
\end{comment}

This result highlights the common observation that, 
if a low-dimensional structure is present in a problem, then DNNs are able to identify it and use it advantageously.
Concretely, our results show that a DNN is sufficiently flexible to benefit from the existence of a reduced basis in the sense that its size in the complex task of solving a parametric PDE does not or only weakly depend on the high-fidelity discretization and mainly on the size of the reduced basis.

The main result is based on four pillars that are described in detail in Subsection \ref{sec:SimplifiedOverView}: First, we show that DNNs can efficiently solve linear systems, in the sense that, if supplied with a matrix and a right-hand side, a moderately-sized network outputs the solution of the inverse problem. Second, the reduced-basis approach allows reformulating the parametric problem, as a relatively small and parametrized linear system. 
Third, in many cases, the map that takes the parameters to the stiffness matrices with respect to the reduced basis and right-hand side can be very efficiently represented by DNNs. Finally, the fact that neural networks are naturally compositional allows combining the efficient representation of linear problems with the NN implementing operator inversion.

In practice, the approximating DNNs that we show to exist need to be found using a learning algorithm. In this work, we will not analyze the feasibility of learning these DNNs. The typical approach here is to apply methods based on stochastic gradient descent. Empirical studies of this procedure in the context of learning deep neural networks were carried out in \cite{Khoo, RBNonlinearProblems, lee2018model, yang2018physics, raissi2018deep}. In particular, we mention the recent study in \cite{GeiPM2020}, which analyzes precisely the set-up described in this work and finds a strong impact of the approximation-theoretical behavior of DNNs on their practical performance.

\subsubsection{Simplified Presentation of the Argument}\label{sec:SimplifiedOverView}
In this section, we present a simplified outline of the arguments leading to the approximation result described in Subsection \ref{sec:TwoResults}. In this simplified setup, we think of a ReLU neural network (ReLU NN) as a function 
\begin{align}\label{eq:SimplifiedNetwork}
\R^n\to \R^k,~\mathbf{x} \mapsto T_L\varrho(T_{L-1}\varrho( \dots \varrho(T_1(\mathbf{x})))), 
\end{align}
where $L \in \N$, $T_1, \dots, T_L$ are affine maps, and $\varrho: \R \to \R$, $\varrho(x) \coloneqq \max \{ 0, x\}$ is the ReLU activation function which is applied coordinate-wise in \eqref{eq:SimplifiedNetwork}. 
We call $L$ the number of layers of the NN. Since $T_\ell$ are affine linear maps, we have for all $\mathbf{x} \in \mathrm{dom } \, T_\ell$ that $T_\ell(\mathbf{x}) = \mathbf{A}_\ell(\mathbf{x}) + \mathbf{b}_\ell$ for a matrix $\mathbf{A}_\ell$ and a vector $\mathbf{b}_\ell$. 
We define the size of the NN as the number of non-zero entries of all $\mathbf{A}_\ell$ and $\mathbf{b}_\ell$ for $\ell \in \{1, \dots, L\}$. This definition will later be sharpened and extended in Definition \ref{def:NeuralNetworks}.

\begin{enumerate}
    \item As a first step, we recall the construction of a \emph{scalar multiplication operator by ReLU NNs} due to \cite{YAROTSKY2017103}. 
    This construction is based on two observations. First, defining $g\colon [0,1] \to [0,1]$,  $g(x) \coloneqq \min \{2x, 2-2x\}$, we see that $g$ is a hat function. 
    Moreover, multiple compositions of $g$ with itself produce saw-tooth functions. We set, for $s \in \N$, $g_1 \coloneqq g$ and $g_{s+1} \coloneqq g \circ g_s$. 
    It was demonstrated in \cite{YAROTSKY2017103} that
    \begin{align}\label{eq:YarotskySquare}
        x^2 = \lim_{n \to \infty} f_n(x)  \coloneqq \lim_{n \to \infty} x - \sum_{s = 1}^n \frac{g_s(x)}{2^{2s}}, \quad \text{ for all } x \in [0,1].
    \end{align}
    The second observation for establishing an approximation of a scalar multiplication by NNs is that we can write $g(x) = 2\varrho(x)-4\varrho(x-1/2) + 2\varrho(x-2)$ and therefore $g_s$ can be exactly represented by a ReLU NN. 
    Given that $g_s$ is bounded by $1$, it is not hard to see that $f_n$ converges to the square function exponentially fast for $n \to \infty$. 
    Moreover, $f_n$ can be implemented exactly as a ReLU NN by previous arguments. 
    Finally, the parallelogram identity, $xz = 1/4((x+z)^2 - (x-z)^2)$ for $x,z\in \R$, 
    demonstrates how an approximate realization of the square function by ReLU NNs yields an approximate realization of scalar multiplication by ReLU NNs.  
    
    It is intuitively clear from the exponential convergence in \eqref{eq:YarotskySquare} and proved in \cite[Proposition 3]{YAROTSKY2017103} that the size of a NN approximating the scalar multiplication on $[-1,1]^2$ up to an error of $\epsilon>0$ is $\mathcal{O}(\log_2(1/\epsilon))$.
    
    \item As a next step, we use the approximate scalar multiplication to approximate a \emph{multiplication operator for matrices by ReLU NNs}. 
    A matrix multiplication of two matrices of size $d \times d$ can be performed using $d^3$ scalar multiplications. 
    Of course, as famously shown in \cite{strassen1969gaussian}, a more efficient matrix multiplication can also be carried out with less than $d^3$ multiplications. However, for simplicity, we focus here on the most basic implementation of matrix multiplication.   
    Hence, the approximate multiplication of two matrices with entries bounded by $1$ can be performed by NN of size $\mathcal{O}(d^3 \log_2(1/\epsilon))$ with accuracy $\epsilon>0$. 
    We make this precise in Proposition \ref{prop:Multiplikation}. Along the same lines, we can demonstrate how to construct a \emph{NN emulating matrix-vector multiplications}.
    
    \item Concatenating multiple matrix multiplications, we can implement \emph{matrix polynomials by ReLU NNs}. In particular, for $\mathbf{A}\in \R^{d\times d}$ such that $\|\mathbf{A}\|_2 \leq 1-\delta$ for some $\delta \in (0,1)$, the map $\mathbf{A} \mapsto \sum_{s = 0}^m \mathbf{A}^s$ can be approximately implemented by a ReLU NN with an accuracy of $\epsilon>0$ and which has a size of $\mathcal{O}(m \log_2^2(m)d^3 \cdot ( \log(1/\epsilon)+\log_2(m) )$, where the additional $\log_2$ term in $m$ inside the brackets appears since each of the approximations of the sum needs to be performed with accuracy $\epsilon/m$. 
    It is well known, that the \emph{Neumann series} $\sum_{s = 0}^m \mathbf{A}^s$ converges exponentially fast to $(\mathbf{Id}_{\R^d}-\mathbf{A})^{-1}$ for $m \to \infty$. 
    Therefore, under suitable conditions on the matrix $\mathbf{A}$, we can construct a NN $\Phi_{\epsilon}^{\mathrm{inv}}$ that \emph{approximates the inversion operator}, 
    i.e. the map $\mathbf{A} \mapsto \mathbf{A}^{-1}$ up to accuracy $\epsilon>0$. 
    This NN has size $\mathcal{O}(d^3\log_2^q(1/\epsilon))$ for a constant $q>0$. This is made precise in Theorem \ref{thm:Inverse}.
    
    \item The existence of $\Phi_{\epsilon}^{\mathrm{inv}}$ and the emulation of approximate matrix-vector multiplications yield that there exists a NN that for a given matrix and right-hand side approximately solves the associated linear system. 
    Next, we make two assumptions that are satisfied in many applications as we demonstrate in Subsection \ref{sec:Examples}: 
    \begin{itemize}
        \item The map from the parameters to the associated stiffness matrices of the Galerkin discretization of the parametric PDE with respect to a reduced basis can be well approximated by NNs.  
        \item The map from the parameters to the right-hand side of the parametric PDEs discretized according to the reduced basis can be well approximated by NNs. 
    \end{itemize}
    From these assumptions and the existence of $\Phi_{\epsilon}^{\mathrm{inv}}$ and a ReLU NN emulating a matrix-vector multiplication, it is not hard to see that there is a NN that \textit{approximately implements the map from a parameter to the associated discretized solution with respect to the reduced basis}. 
    If the reduced basis has size $d$ and the implementations of the map yielding the stiffness matrix and the right-hand side are sufficiently efficient then, by the construction of $\Phi_{\epsilon}^{\mathrm{inv}}$, 
    the resulting NN has size $\mathcal{O}(d^3\log_2^q(1/\epsilon))$. We call this NN $\Phi_{\epsilon}^{\mathrm{rb}}$.
    \item Finally, we build on the construction of $\Phi_{\epsilon}^{\mathrm{rb}}$ to establish the result of Section \ref{sec:TwoResults}. First of all, 
    let $D$ be the size of the high-fidelity basis. 
    If $D$ is sufficiently large, then every element from the reduced basis can be approximately represented in the high-fidelity basis. 
    Therefore, one can perform an \emph{approximation to a change of bases} by applying a linear map $\mathbf{V} \in \R^{D \times d}$ to a vector with respect to the reduced basis. 
    The first statement of Subsection \ref{sec:TwoResults} now follows directly by considering the NN $\mathbf{V} \circ \Phi_{\epsilon}^{\mathrm{rb}}$. 
    Through this procedure, the size of the NN is increased to $\mathcal{O}(d^3\log_2^q(1/\epsilon)) + d D)$. 
    The full argument is presented in the proof of Theorem \ref{thm:NNCoefficientApproximation}.
    
    \begin{comment}
    Concerning the second statement in Subsection \ref{sec:TwoResults} we additionally \emph{assume that NNs can approximate each element of the reduced basis accurately}. 
    Using scalar multiplications again, we can multiply each coefficient of the output of the solution map with the associated approximate implementation of the basis function to obtain an approximate implementation of the map $\mathcal{Y}\times \Omega \ni (y,\mathbf{x}) \mapsto u_y(\mathbf{x})$. 
    This yields the second statement of Subsection \ref{sec:TwoResults}. 
    The details are presented in Theorem \ref{thm:ApproxofU}.
    \end{comment}
\end{enumerate}

\subsection{Potential Impact and Extensions}

We believe that the results of this article have the potential to significantly impact the research on NNs and parametric problems in the following ways:

\begin{itemize}
\item \emph{Theoretical foundation:} We offer a theoretical underpinning for the empirical success of NNs for parametric problems which was observed in, e.g., \cite{Khoo, RBNonlinearProblems,  lee2018model, yang2018physics, raissi2018deep}. Indeed, our result, Theorem \ref{thm:NNCoefficientApproximation}, indicates that properly trained NNs are as efficient in solving parametric PDEs as RBMs if the complexity of NNs is measured in terms of free parameters. 
    On a broader level, linking deep learning techniques for parametric PDE problems with approximation theory opens the field up to a new direction of thorough mathematical analysis.    
    \item \emph{Understanding the role of the ambient dimension:}
    It has been repeatedly observed that NNs seem to offer approximation rates of high-dimensional functions that do not deteriorate exponentially with increasing dimension, 
    \cite{mallat2016understanding, Goodfellow-et-al-2016}.

    In this context, it is interesting to identify the key quantity determining the achievable approximation rates of DNNs. Possible explanation for approximation rates that are essentially independent from the ambient dimension have been identified if the functions to be approximated have special structures such as compositionality, \cite{mhaskar2016learning, poggio2017and}, or invariances, \cite{mallat2016understanding, PetV2018OptApproxReLU}. 
    In this article, we identify the highly problem-specific notion of the \emph{dimension of the solution manifold} as a key quantity determining the achievable approximation rates by NNs for parametric problems. We discuss the connection between the approximation rates that NNs achieve and the ambient dimension in detail in Section \ref{sec:CurseDiscussion}.

    \item \emph{Identifying suitable architectures:}
    One question in applications is how to choose the right NN architectures for the associated problem. 
    Our results show that NNs of sufficient depth and size are able to produce very efficient approximations. 
    Nonetheless, it needs to be mentioned that our results do not yield a lower bound on the number of layers and thus it is not clear whether deep NNs are indeed necessary.      
\end{itemize}

This work is a step towards establishing a theory of deep learning-based solutions of parametric problems. However, given the complexity of this field, 
it is clear that many more steps need to follow. We outline a couple of natural further questions of interest below:

\begin{itemize}
    \item \emph{General parametric problems:} Below we restrict ourselves to coercive, symmetric, and linear parametric problems with finitely many parameters. 
    There exist many extensions to, e.g. noncoercive, nonsymmetric, or nonlinear problems, \cite{veroy2003posteriori, grepl2007efficient, canuto2009posteriori, jung2009reduced,CohenSchwabCurse,ZechSchwabMultilevel}, or to infinite parameter spaces, see e.g. \cite{ bachmayr-cohen-migliorati,BachmayrCohenSPDEs}. It would be interesting to see if the methods proposed in this work can be generalized to these more challenging situations. 
     
    \item \emph{Bounding the number of snapshots:} 
    The interpretation of the parametric problem as a statistical learning problem has the convenient side-effect that various techniques have been established to bound the number of necessary samples $N$, such that the empirical loss \eqref{eq:EmpLoss} is very close to the expected loss.
    In other words, the generalization error of the minimizer of the learning procedure is small, meaning that the prediction rule performs well on unseen data. 
    (Here, the error is measured in a norm induced by the loss function and the underlying probability distribution.). 
    Using these techniques, it is possible to bound the number of snapshots required for the offline phase to achieve a certain fidelity in the online phase. 
    Estimates of the generalization error in the context of high-dimensional PDEs have been deduced in, e.g., \cite{weinan2017deep,grohs2018proof,berner2018analysis, eigel2018variational, Reisinger2019}.

    \item \emph{Special NN architectures:} 
    This article studies the feasibility of standard feed-forward NNs. 
    In practice, one often uses special architectures that have proved efficient in applications. 
    First and foremost, almost all NNs used in applications are convolutional neural networks (CNNs), \cite{lecun1998gradient}. 
    Hence a relevant question is to what extent the results of this work also hold for such architectures. 
    It was demonstrated in \cite{petersen2018equivalence} that there is a direct correspondence between the approximation rates of CNNs and that of standard NNs. 
    Thus we expect that the results of this work translate to CNNs. 
    
    Another successful architecture is that of residual neural networks (ResNets), \cite{he2016deep}.
    These neural networks also admit skip-connections, i.e.,
    do not only connect neurons in adjacent layers. 
    This architecture is by design at least as powerful as a standard NN and hence inherits all approximation properties of standard NNs.
    
    \item \emph{Necessary properties of neural networks:} 
    In this work, we demonstrate the attainability of certain approximation rates by NNs. 
    It is not clear if the presented results are optimal or if there are specific necessary assumptions on the architectures, 
    such as a minimal depth, a minimal number of parameters, or a minimal number of neurons per layer. For approximation results of classical function spaces such lower bounds on specifications of NNs have been established for example in \cite{boelcskeiNeural, ReLUSobolev, PetV2018OptApproxReLU, YAROTSKY2017103}. 
    It is conceivable that the techniques in these works can be transferred to the approximation tasks described in this work.

    \item \emph{General matrix polynomials:} 
    As outlined in Subsection \ref{sec:SimplifiedOverView}, our results are based on the approximate implementation of matrix polynomials. Naturally, this construction can be used to define and construct a ReLU NN based functional calculus. 
    In other words, for any $d \in \N$ and every continuous function $f$ that can be well approximated by polynomials, 
    we can construct a ReLU NN which approximates the map $\mathbf{A} \mapsto f(\mathbf{A})$ for any appropriately bounded matrix $\mathbf{A}$. 
   
    A special instance of such a function of interest is given by  $f(\mathbf{A})\coloneqq e^{t\mathbf{A}},~t>0,$ which is analytic and plays an important role in the treatment of initial value problems.
    
    \item \emph{Numerical studies:} In a practical learning problem, the approximation-theoretical aspect only describes one part of the problem. Two further central factors are the data generation and the optimization process. It is conceivable that in comparison to these issues, approximation theoretical considerations only play a negligible role. To understand the extent to which the result of this paper is relevant for applications, comprehensive studies of the theoretical set-up of this work should be carried out. A first one was published recently in \cite{GeiPM2020}.  
\end{itemize}

\subsection{Related Work}
In this section, we give an extensive overview of works related to this paper. In particular, for completeness, we start by giving a review of approximation theory of NNs without an explicit connection to PDEs. Afterward, we will see how NNs have been employed for the solution of PDEs. 

\subsubsection{Review of Approximation Theory of Neural Networks}\label{sec:approximationTheoryOfDNNs}

The first and most fundamental results on the approximation capabilities of NNs were universality results. 
These results claim that NNs with at least one hidden layer can approximate any continuous function on a bounded domain to arbitrary accuracy if they have sufficiently many neurons,
\cite{Hornik1989universalApprox, Cybenko1989}. 
However, these results do not quantify the required sizes of NNs to achieve these rates. 
One of the first results in this direction was given in \cite{Barron1993}. 
There, a bound on the sufficient size of NNs with sigmoidal activation functions approximating a function with finite Fourier moments is presented. 
Further results describe approximation rates for various smoothness classes by sigmoidal or even more general activation functions, \cite{ Mhaskar:1996:NNO:1362203.1362213, Mhaskar1993, PinkusUniversalApproximation, Maiorov1999LowerBounds}.

For the non-differentiable activation function ReLU, 
first rates of approximation were identified in \cite{YAROTSKY2017103} for classes of smooth functions, in \cite{PetV2018OptApproxReLU} for piecewise smooth functions, and in \cite{grohs2019deep} for oscillatory functions. 
Moreover, NNs mirror the approximation rates of various dictionaries such as wavelets,
\cite{shaham2018provable}, general affine systems, \cite{boelcskeiNeural}, linear finite elements, \cite{he2018relu}, and higher-order finite elements, \cite{OPS19_811}.

\subsubsection{Neural Networks and PDEs}
A well-established line of research is that of solving high-dimensional PDEs by NNs assuming that the NN is the solution of the underlying PDE, e.g., \cite{DGM, berner2018analysis, Curse, JentzenKolmogorov, Reisinger2019, JentzenHeat, weinan2017deep,eigel2018variational}. 
In this regime, it is often possible to bound the size of the involved NNs in a way that does not scale exponentially with the underlying dimension. 
In that way, these results are quite related to our approaches. 
Our results do not seek to represent the solution of a PDE as a NN, but a parametric map. 
Moreover, we analyze the complexity of the solution manifold in terms of Kolmogorov $N$-widths.
Finally, the underlying spatial dimension of the involved PDEs in our case would usually be moderate.
However, the dimension of the parameter space could be immense.

One of the first approaches analyzing NN approximation rates for solutions of parametric PDEs was carried out in \cite{SchwabZech}. In that work, the analyticity of the solution map $y\mapsto u_y$ and polynomial chaos expansions with respect to the parametric variable are used to approximate the map $y\mapsto u_y$ by ReLU NNs of moderate size.
Moreover, we mention the works \cite{Khoo, RBNonlinearProblems,  lee2018model, yang2018physics, raissi2018deep} which apply NNs in one way or another to parametric problems. 
These approaches study the topic of learning a parametric problem but do not offer a theoretical analysis of the required sizes of the involved NNs. These results form our motivation to study the constructions of this paper.

Finally, we mention that the setup of the recent numerical study \cite{GeiPM2020} is closely related to this work.

 \subsection{Outline}
 
 In Section \ref{sec:RedBase}, we describe the type of parametric PDEs that we  consider in this paper, and we recall the theory of RBs. 
 Section \ref{sec:NNCalc} introduces a NN calculus which is the basis for all constructions in this work. 
 There we will also construct the NN that maps a matrix to its approximate inverse in Theorem \ref{thm:Inverse}. 
 In Section \ref{sec:NNPDE}, we construct NNs approximating parametric maps. 
 First, in Theorem \ref{subsec:CoeffNNs}, we approximate the parametric maps after a high-fidelity discretization. 
 Afterward, in Subsection \ref{sec:Examples}, we list two broad examples where all assumptions which we imposed are satisfied.
 
We conclude this paper in Section \ref{sec:CurseDiscussion} with a discussion of our results in light of the dependence of the underlying NN complexities in terms of the governing quantities.
 
 \begin{comment}
 Finally, in Theorem \ref{thm:ApproxofU}, we demonstrate a construction of a NN approximating the non-discretized parametric map. 
 \end{comment}
 To not interrupt the flow of reading, we have deferred all auxiliary results and proofs to the appendices.
 
 \subsection{Notation}
 We denote by $\N=\{1,2,...\}$ the set of all \emph{natural numbers} and define $\N_0\coloneqq \N\cup \{0\}.$ Moreover, for $a\in \R$ we set $\lfloor a \rfloor\coloneqq \max\{b\in \Z\colon~b\leq a\} $ and $\lceil a\rceil\coloneqq \min\{b\in \Z\colon~b\geq a\}. $ Let $n,l\in \N.$
 Let $\mathbf{Id}_{\R^n}$ be the \emph{identity} and $\mathbf{0}_{\R^n}$ be the \emph{zero vector} on $\R^n.$ 
 Moreover, for $\mathbf{A}\in \R^{n\times l}$, we denote by $\mathbf{A}^T$ its \emph{transpose,} by $\sigma(\mathbf{A})$ the \emph{spectrum of $\mathbf{A}$}, 
 by $\|\mathbf{A}\|_2$ its \emph{spectral norm} and by $\|\mathbf{A}\|_0\coloneqq \# \{(i,j)\colon\mathbf{A}_{i,j}\neq 0\},$ where $\# V$ denotes the cardinality of a set $V$, 
 the \emph{number of non-zero entries} of $\mathbf{A}$. Moreover, for $\mathbf{v}\in \R^n$ we denote by $|\mathbf{v}|$ its \emph{Euclidean norm}.
 Let $V$ be a vector space. 
 Then we say that $X\subset^{\mathrm{s}}V,$ if $X$ is a \emph{linear subspace} of $V.$ 
 Moreover, if $(V,\|\cdot\|_V)$ is a normed vector space, $X$ is a subset of $V$ and $v\in V,$ we denote by $\mathrm{dist}(v,X)\coloneqq  \inf\{\|x-v\|_V\colon~x\in X\}$ the \emph{distance} between $v,X$ and by $(V^*,\|\cdot\|_{V^*})$ the \emph{topological dual space of $V$,}
 i.e. the set of all scalar-valued, linear, continuous functions equipped with the \emph{operator norm}. 
 For a compact set $\Omega\subset \R^n$ we denote by $C^r(\Omega),~r\in \N_0\cup\{\infty\},$ the spaces of \emph{$r$ times continuously differentiable functions}, 
 by $L^p(\Omega,\R^n),p\in [1,\infty]$ the \emph{$\R^n$-valued Lebesgue spaces}, where we set $L^p(\Omega)\coloneqq L^p(\Omega,\R)$ and by $H^1(\Omega)\coloneqq W^{1,2}(\Omega)$ the \emph{first-order Sobolev space}.

\section{Parametric PDEs and Reduced Basis Methods}\label{sec:RedBase}

In this section, we introduce the type of parametric problems that we study in this paper. 
A parametric problem in its most general form is based on a map $\mathcal{P}\colon \mathcal{Y} \to \mathcal{Z}$, where $\mathcal{Y}$ is the \emph{parameter space} and $\mathcal{Z}$ is called \emph{solution state space} $\mathcal{Z}$. 
In the case of parametric PDEs, $\mathcal{Y}$ describes certain parameters of a partial differential equation, $\mathcal{Z}$ is a function space or a discretization thereof, and $\mathcal{P}(y)\in \mathcal{Z}$ is found by solving a PDE with parameter $y$. 

We will place several assumptions on the PDEs underlying $\mathcal{P}$ and the parameter spaces $\mathcal{Y}$ in Section \ref{subsec:Setup}. Afterward, we give an abstract overview of Galerkin methods in Section \ref{subsec:Galerkin} before recapitulating some basic facts about RBs in Section \ref{subsec:RBIntro}.

\subsection{Parametric Partial Differential Equations}\label{subsec:Setup}

In the following, we will consider parameter-dependent equations given in the variational form 
\begin{align}\label{eq:opeq}
    b_y\left(u_y, v\right) =  f_y(v), \quad \text{ for all } y\in \Ycal,~v\in \Hil,
\end{align}
where
\begin{compactenum}[(i)]
    \item $\mathcal{Y}$ is the \emph{parameter set} specified in Assumption \ref{ass:opeq},
    \item $\mathcal{H}$ is a Hilbert space,
    \item $b_y\colon\Hil\times \Hil\to \mathbb{R}$ is a \emph{continuous bilinear form}, which fulfills certain well-posedness conditions specified in Assumption \ref{ass:opeq},
    \item $f_{y}\in \Hil^*$ is the \emph{parameter-dependent right-hand side of \eqref{eq:opeq}},
    \item  $u_y\in \Hil$ is the \emph{solution of \eqref{eq:opeq}}. 
\end{compactenum}

\begin{assumption} \label{ass:opeq}
Throughout this paper, we impose the following assumptions on Equation \eqref{eq:opeq}.
\begin{itemize}
    \item \textbf{The parameter set $\mathcal{Y}$}: We assume that $\Ycal$ is a compact subset of $\R^p,$ where $p\in \N$ is fixed and potentially large. 
    \begin{remark*}
    In \cite[Section 1.2]{CohenDeVoreHighPDE}, it has been demonstrated that if $\Ycal$ is a compact subset of some Banach space $V$, then one can describe every element in $\Ycal$ by a sequence of real numbers in an affine way. 
    To be more precise, there exist $(v_i)_{i=0}^\infty\subset V$ such that for every $y\in \Ycal$ and some coefficient sequence $\mathbf{c}_y$ whose elements can be bounded in absolute value by 1 there holds $y=v_0 +\sum_{i=1}^\infty (\mathbf{c}_y)_i v_i,$ implying that $\Ycal$ can be completely described by the collection of sequences $\mathbf{c}_y$. 
    In this paper, we assume these sequences $\mathbf{c}_y$ to be finite with a fixed, but possibly large support size.
    \end{remark*}
    
    \item \textbf{Symmetry, uniform continuity, and coercivity of the bilinear forms:} We assume that for all $y\in \Ycal$ the bilinear forms $b_y$ are \emph{symmetric}, i.e.
    \begin{align*}
        b_y(u,v)=b_y(v,u), \qquad \text{ for all } u,v\in \Hil.
    \end{align*}
    Moreover, we assume that the bilinear forms $b_y$ are \emph{uniformly continuous} in the sense that there exists a constant $C_{\mathrm{cont}} > 0$ with
    \begin{align*}
        \left|b_y(u,v) \right| \leq C_{\mathrm{cont}}\|u\|_{\Hil}\|v\|_{\Hil}, \qquad \text{ for all } u\in \Hil,v\in \Hil,y\in \Ycal.
    \end{align*}
    
   Finally, we assume that the involved bilinear forms are \emph{uniformly coercive} in the sense that there exists a constant $C_{\mathrm{coer}}>0$ such that 
      \begin{align*}
       \inf_{u\in\Hil\setminus\{0\}} \frac{b_y(u,u)}{\|u\|_{\Hil}^2}  \geq C_{\mathrm{coer}}, \qquad \text{ for all } u\in \Hil,y\in \Ycal.
   \end{align*}
   Hence, by the Lax-Milgram lemma  (see \cite[Lemma 2.1]{QuarteroniIntro}), Equation \eqref{eq:opeq} is \emph{well-posed}, i.e. for every $y\in \Ycal$ and every $f_y\in \Hil^*$ there exists exactly one $u_y\in \Hil $ such that \eqref{eq:opeq} is satisfied and $u_y$ depends continuously on $f_y$. 
   
    \item \textbf{Uniform boundedness of the right-hand side:} We assume that there exists a constant $C_{\mathrm{rhs}}>0$ such that 
    \begin{align*}
        \left\|f_y\right\|_{\Hil^*}\leq C_{\mathrm{rhs}}, \qquad \text{ for all } y\in \Ycal.
    \end{align*}
    
     \item \textbf{Compactness of the solution manifold:} We assume that the \emph{solution manifold}
     \begin{align*}
   S(\Ycal)\coloneqq \left\{u_y\colon u_y \text{ is the solution of } \eqref{eq:opeq},~ y\in \Ycal \right\}
    \end{align*}
    is compact in $\Hil.$
    
    \begin{remark*}
        The assumption that $S(\Ycal)$ is compact follows immediately if the solution map $y\mapsto u_y$ is continuous. 
        This condition is true (see \cite[Proposition 5.1, Corollary 5.1]{QuarteroniIntro}), if for all $u,v\in \Hil$ the maps $y\mapsto b_y(u,v)$  as well as $y\to f_y(v)$ are Lipschitz continuous. 
        In fact, there exists a multitude of parametric PDEs, for which the maps $y\mapsto b_y(u,v)$ and $y\to f_y(v)$ are even in $C^{r}$ for some $r\in \N\cup\{\infty\}.$ 
        In this case, $\left\{\left(y,u_y\right)\colon y\in \Ycal \right\}\subset \R^p\times \Hil $ is a $p$-dimensional manifold of class $C^r$ (see \cite[Proposition 5.2, Remark 5.4]{QuarteroniIntro}). 
        Moreover, we refer to \cite[Remark 5.2]{QuarteroniIntro} and the references therein for a discussion under which circumstances it is possible to turn a discontinuous parameter dependency into a continuous one ensuring the compactness of $S(\Ycal)$.
    \end{remark*}
\end{itemize}
\end{assumption}

\subsection{High-Fidelity Approximations} \label{subsec:Galerkin}

In practice, one cannot hope to solve \eqref{eq:opeq} exactly for every $y\in \Ycal$. Instead, 
if we assume for the moment that $y$ is fixed,
a common approach towards the calculation of an approximate solution of \eqref{eq:opeq} is given by the \emph{Galerkin method}, which we will describe shortly following \cite[Appendix A]{CertReduced} and \cite[Chapter 2.4]{QuarteroniIntro}. In this framework, instead of solving \eqref{eq:opeq}, one solves a discrete scheme of the form 
\begin{align}\label{eq:discopeq}
    b_y\left(u^{\mathrm{disc}}_y , v\right) = f_y(v) \qquad \text{ for all } v\in U^{\mathrm{disc}},
\end{align}
where $U^{\mathrm{disc}}\subset^{\mathrm{s}}\Hil$ is a subspace of $\Hil$ with $\mathrm{dim}\left(U^{\mathrm{disc}} \right) < \infty$ and $u^{\mathrm{disc}}_y\in U^{\mathrm{disc}}$ is the solution of \eqref{eq:discopeq}. For the solution $u^{\mathrm{disc}}_y$ of \eqref{eq:discopeq} we have that
\begin{align*}
    \left\|u^{\mathrm{disc}}_y \right\|_{\Hil} \leq \frac{1}{C_{\mathrm{coer}}}\left\|f_y \right\|_{\Hil^*}.
\end{align*}
Moreover, up to a constant, we have that $u^{\mathrm{disc}}_y$ is a best approximation of the solution $u_y$ of \eqref{eq:opeq} by elements in $U^{\mathrm{disc}}$. To be more precise, by \emph{Cea's Lemma}, \cite[Lemma 2.2]{QuarteroniIntro},  
\begin{align}\label{eq:Cea}
    \left\|u_y-u^{\mathrm{disc}}_y \right\|_{\Hil} \leq \frac{C_{\mathrm{cont}}}{C_{\mathrm{coer}}} \inf_{w\in U^{\mathrm{disc}}}\left\|u_y-w \right\|_{\Hil }.
\end{align}
 Let us now assume that $U^\mathrm{disc}$ is given. Moreover, if $N\coloneqq \mathrm{dim}\left(U^{\mathrm{disc}}\right)$, let $\left(\varphi_{i} \right)_{i=1}^N$ be a basis for $U^{\mathrm{disc}}$. Then the matrix 
\begin{align*}
    \mathbf{B}_y\coloneqq \left(b_y\left( \varphi_{j},\varphi_{i}\right) \right)_{i,j=1}^N
\end{align*}
is non-singular and positive definite. The solution $u^{\mathrm{disc}}_y$ of \eqref{eq:discopeq} satisfies 
\begin{align*}
    u^{\mathrm{disc}}_y= \sum_{i=1}^N (\mathbf{u}_y)_{i} \varphi_{i},
\end{align*}
where 
\begin{align*}
    \mathbf{u}_y\coloneqq \left(\mathbf{B}_y\right)^{-1}\mathbf{f}_y \in \R^N
\end{align*}
and $\mathbf{f}_y\coloneqq \left( f_y\left(\varphi_{i}\right) \right)_{i=1}^N\in \R^N$. Typically, one starts with a \emph{high-fidelity discretization} 
of the space $\Hil$, i.e. one chooses a finite- but potentially high-dimensional subspace for which the computed discretized solutions are sufficiently accurate for any $y\in \Ycal$. To be more precise, we postulate the following: 

\begin{assumption} \label{ass:high-fidelity}
    We assume that there exists a finite dimensional space $U^\mathrm{h}\subset^{\mathrm{s}}\Hil$ with dimension $D<\infty$ and basis $(\varphi_i)_{i=1}^D$. This space is called \emph{high-fidelity discretization}. 
    For $y\in \Ycal,$ denote by $\mathbf{B}_{y}^{\mathrm{h}}\coloneqq \left(b_y(\varphi_j,\varphi_i)\right)_{i,j=1}^{D}\in \R^{D\times D}$ the stiffness matrix of the high-fidelity discretization, 
    by $\mathbf{f}_{y}^{\mathrm{h}}\coloneqq \left(f_y(\varphi_i) \right)_{i=1}^{D}$ the discretized right-hand side, and by $\mathbf{u}_{y}^{\mathrm{h}}\coloneqq \left(\mathbf{B}_{y}^{\mathrm{h}} \right)^{-1}\mathbf{f}_{y}^{\mathrm{h}}\in \R^{D}$ the coefficient vector of the Galerkin solution with respect to the high-fidelity discretization.
    Moreover, we denote by $u^\mathrm{h}_y\coloneqq \sum_{i=1}^{D} \left(\mathbf{u}_{y}^{\mathrm{h}}\right)_i \varphi_i$ the Galerkin solution. 
    
    We assume that, for every $y\in \Ycal$, $\sup_{y\in \Ycal} \left\|u_y-u^{\mathrm{h}}_y \right\|_\Hil\leq \epsilonhat$ for an arbitrarily small, but fixed $\epsilonhat>0$. In the following, similarly as in \cite{DahmenSampleSolution}, we will not distinguish between $\Hil$ and $U^{\mathrm{h}}$, unless such a distinction matters.
\end{assumption}

In practice, following this approach, one often needs to calculate $u^{\mathrm{h}}_y\approx u_y$ for a variety of parameters $y\in \Ycal$. This, in general, is a very expensive procedure due to the high-dimensionality of the space $U^{\mathrm{h}}.$ 
In particular, given $\left(\varphi_{i} \right)_{i=1}^{D},$ 
one needs to solve high-dimensional systems of linear equations to determine the coefficient vector $\mathbf{u}^{\mathrm{h}}_y.$ A well-established remedy to overcome these difficulties is given by methods based on the theory of reduced bases, which we will recapitulate in the upcoming subsection. 

Before we proceed, let us fix some notation.
We denote by $\mathbf{G}\coloneqq \left(\langle \varphi_i,\varphi_j\rangle_\Hil\right)_{i,j=1}^{D}\in \R^{D\times D}$ the symmetric, 
positive definite \emph{Gram matrix} of the basis vectors $(\varphi_i)_{i=1}^{D}.$ 
Then, for any $v\in U^{\mathrm{h}}$ with coefficient vector $\mathbf{v}$ with respect to the basis $(\varphi_i)_{i=1}^{D}$ we have (see \cite[Equation 2.41]{QuarteroniIntro}) 
\begin{align} \label{eq:NormCoefficientHilbert}
\left| \mathbf{v}\right|_{\mathbf{G}}\coloneqq \left|\mathbf{G}^{1/2}\mathbf{v}\right|= \|v\|_\Hil.
\end{align} 

\subsection{Theory of Reduced Bases} \label{subsec:RBIntro}
In this subsection and unless stated otherwise, we follow \cite[Chapter 5]{QuarteroniIntro} and the references therein. The main motivation behind the theory of RBs lies in the fact that under  Assumption \ref{ass:opeq} the solution manifold $S(\Ycal)$
is a compact subset of $\Hil.$
This compactness property allows posing the question whether, for every $\epsilontilde\geq \epsilonhat$, it is possible to construct a finite-dimensional subspace $U^{\mathrm{rb}}_\epsilontilde$ of $\Hil $ such that $d(\epsilontilde)\coloneqq \mathrm{dim}\left(U^{\mathrm{rb}}_\epsilontilde\right) \ll D$  and such that
\begin{align}\label{eq:RBError}
    \sup_{y\in \Ycal} \inf_{w\in U^{\mathrm{rb}}_\epsilontilde}\left\|u_y- w \right\|_{\Hil } \leq \epsilontilde,
\end{align}
or, equivalently, if there exist linearly independent vectors $\left(\psi_{i} \right)_{i=1}^{d(\epsilontilde)}$ with the property that  
\begin{align*}
    \left\|\sum_{i=1}^{d(\epsilontilde)} (\mathbf{c}_y)_i \psi_{i} - u_y \right\|_\Hil \leq \epsilontilde, \quad \text{ for all } y\in \Ycal \text{ and some coefficient vector } \mathbf{c}_y\in \R^{d(\epsilontilde)}.
\end{align*}
The starting point of this theory lies in the concept of the Kolmogorov $N$-width which is defined as follows.

\begin{definition}[\cite{DahmenSampleSolution}]
     For $N\in\N,$ the \emph{Kolmogorov $N$-width} of a bounded subset $X$ of a normed space $V$ is defined by 
    $$
    W_N(X) \coloneqq \inf_{\substack{V_N \subset^\mathrm{s} V \\\mathrm{dim}(V_N) \leq N}}\sup_{x\in X} \mathrm{dist}\left(x,V_N\right).
    $$
\end{definition}
    This quantity describes the best possible uniform approximation error of $X$ by an at most $N$-dimensional linear subspace of $V$.We discuss concrete upper bounds on $W_N(S(\Ycal))$ in more detail in Section \ref{sec:CurseDiscussion}.
    The aim of RBMs is to construct the spaces $U_{\epsilontilde}^{\mathrm{rb}}$ in such a way that the quantity $ \sup_{y\in \Ycal}\mathrm{dist}\left(u_y,U^{\mathrm{rb}}_{\epsilontilde}\right)$ is close to $W_{d(\epsilontilde)}\left(S(\Ycal)\right)$. 
    
    The identification of the basis vectors $(\psi_i)_{i=1}^{d(\epsilontilde)}$ of  $U_{\epsilontilde}^{\mathrm{rb}}$ usually happens in an \emph{offline phase} in which one has considerable computational resources available and which is usually based on the determination of high-fidelity discretizations of samples of the parameter set $\Ycal.$ 
    The most common methods are based on {(weak) greedy procedures} (see for instance \cite[Chapter 7]{QuarteroniIntro} and the references therein) or {proper orthogonal decompositions} (see for instance \cite[Chapter 6]{QuarteroniIntro} and the references therein). 
    In the last step, an orthogonalization procedure (such as a Gram-Schmidt process) is performed to obtain an orthonormal set of basis vectors $(\psi_i)_{i=1}^{d(\epsilontilde)}$. 
    
    Afterward, in the \emph{online phase}, one assembles for a given input $y$ the corresponding low-dimensional stiffness matrices and vectors and determines the Galerkin solution by solving a low-dimensional system of linear equations. 
    To ensure an efficient implementation of the online phase, a common assumption which we do \emph{not} require in this paper is the \emph{affine decomposition} of \eqref{eq:opeq}, 
    which means that there exist $Q_b,Q_f\in \N,$ parameter-independent bilinear forms $b^q\colon\Hil\times \Hil \to \R,$ maps $\theta_q\colon\Ycal\to \R$ for $q=1,...,Q_b,$ parameter-independent $f^{q'}\in \Hil^*$  as well as maps  $\theta^{q'}\colon\Ycal\to \R$ for $q'=1,...,Q_f$ such that 
    \begin{align}\label{eq:affineProblem}
        b_y = \sum_{q=1}^{Q_b} \theta_q(y)b^q, \qquad \text{as well as}\qquad f_y = \sum_{q'=1}^{Q_f} \theta^{q'}(y)f^{q'}, \qquad \text{for all } y\in \Ycal.  
    \end{align}
    As has been pointed out in \cite[Chapter 5.7]{QuarteroniIntro}, in principal three types of reduced bases generated by RBMs have been established in the literature - the \emph{Lagrange reduced basis}, 
    the \emph{Hermite reduced basis} and the \emph{Taylor reduced basis.}  
    While the most common type, the Lagrange RB, consists of orthonormalized versions of  high-fidelity \emph{snapshots} $u^{\mathrm{h}}\left(y^1\right)\approx u\left(y^1\right),...,u^{\mathrm{h}}\left(y^{n}\right)\approx u\left(y^{n}\right),$ 
    Hermite RBs consist of snapshots $u^{\mathrm{h}}\left(y^1\right)\approx u\left(y^1\right),...,u^{\mathrm{h}}\left(y^{n}\right)\approx u\left(y^{n}\right),$ as well as their first partial derivatives $\frac{\partial u^{\mathrm{h}}}{\partial y_i}(y^j)\approx \frac{\partial u}{\partial y_i}(y^j) ,i=1,...,p,~j=1,...,n$, 
    whereas Taylor RBs are built of derivatives of the form $\frac{\partial^k u^{\mathrm{h}}}{\partial y_i^k}(\overline{y})\approx\frac{\partial^k u}{\partial y_i^k}(\overline{y}),~i=1,...,p,~k=0,...,n-1$ around a given expansion point $\overline{y}\in \Ycal$.
    In this paper, we will later assume that there exist small RBs $(\psi_i)_{i=1}^{d(\epsilontilde)}$ generated by \emph{arbitrary} linear combinations of the high-fidelity elements $(\varphi_i)_{i=1}^{D}$. Note that all types of RBs discussed above satisfy this assumption. 
    
    The next statement gives a (generally sharp) upper bound which relates the possibility of constructing small snapshot RBs directly to the Kolmogorov $N$-width. 
    
    \begin{theorem}[\protect{\cite[Theorem 4.1.]{CohenDahmenGreedy}}] \label{thm:InnerNWidth}
     Let $N\in \N.$ For a compact subset $X$ of a normed space $V,$ define the \emph{inner $N$-width of $X$} by 
     \begin{align*}
         \overline{W}_N(X)\coloneqq \inf_{V_N\in \mathcal{M}_N} \sup_{x\in X} \mathrm{dist}\left(x,V_N\right),
     \end{align*}
     where $\mathcal{M}_N\coloneqq\left\{V_N\subset^{\mathrm{s}}V\colon~V_N=\mathrm{span}\left(x_i\right)_{i=1}^N,x_1,...,x_N\in X \right\}.$
     Then 
     \begin{align*}
         \overline{W}_N(X) \leq (N+1) W_N(X).
     \end{align*}
    \end{theorem}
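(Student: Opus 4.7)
The plan is to construct the spanning elements $x_1,\dots,x_N \in X$ via a volume-maximization procedure inside a near-optimal Kolmogorov subspace, and then use an exchange argument on this volume to control the coordinates of every projected element. The factor $N+1$ will arise cleanly from combining the resulting coordinate bound with the $\epsilon \approx W_N(X)$ approximation error. Concretely, fix $\epsilon > W_N(X)$ and choose $V_N \subset^{\mathrm{s}} V$ with $\dim V_N \leq N$ and $\sup_{x\in X}\mathrm{dist}(x,V_N) \leq \epsilon$. For each $x\in X$, select a point $P(x)\in V_N$ satisfying $\|x-P(x)\|_V \leq \epsilon$ (such a point exists by finite-dimensionality of $V_N$, though $P$ need not be linear or continuous in the normed-space setting). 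Equip the finite-dimensional $V_N$ with an auxiliary inner product, so that an $N$-dimensional volume $\mathrm{vol}(y_1,\dots,y_N)$ on $V_N$ is well defined.

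Consider the supremum of $\mathrm{vol}(P(x_1),\dots,P(x_N))$ over all $N$-tuples in $X^N$; this supremum is finite since $X$ is compact and hence $P(X)$ is bounded. For any $\eta \in (0,1)$, pick $(x_1,\dots,x_N)\in X^N$ attaining this supremum up to a factor $(1-\eta)$. Assuming for the moment that the chosen $P(x_i)$ span $V_N$, expand an arbitrary $x\in X$ as $P(x) = \sum_{i=1}^N c_i P(x_i)$. Replacing $x_j$ by $x$ in the selection alters the volume by exactly the factor $|c_j|$, since the corresponding change-of-basis matrix equals the identity except in column $j$, where it is $(c_1,\dots,c_N)^T$, and so its determinant is $c_j$. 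Near-maximality of the chosen tuple therefore forces $|c_j|\leq (1-\eta)^{-1}$ for every $j$.

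From the identity
$$
x - \sum_{i=1}^N c_i\, x_i \;=\; \bigl(x - P(x)\bigr) \;-\; \sum_{i=1}^N c_i\bigl(x_i - P(x_i)\bigr),
$$
the triangle inequality yields
$$
\mathrm{dist}\bigl(x,\mathrm{span}(x_1,\dots,x_N)\bigr) \;\leq\; \epsilon \bigl(1 + N(1-\eta)^{-1}\bigr).
$$
Taking the supremum over $x\in X$ gives $\overline{W}_N(X) \leq (1+N(1-\eta)^{-1})\epsilon$. Sending $\eta \downarrow 0$ and then $\epsilon \downarrow W_N(X)$ delivers the desired inequality $\overline{W}_N(X) \leq (N+1) W_N(X)$.

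The main subtlety I expect is the degenerate case in which $\mathrm{span}\bigl(P(X)\bigr)$ does not cover all of $V_N$, so that the $N$-volume vanishes identically and the exchange argument collapses. The remedy is to restrict a priori to the subspace $\widetilde V_N := \mathrm{span}(P(X))$, whose dimension $\widetilde N \leq N$ takes the place of $N$ in the estimate and yields the sharper bound $(\widetilde N+1)\epsilon \leq (N+1)\epsilon$. A secondary technical point is that $P$ need not be continuous; this is inessential because the argument only needs the existence of near-supremal $N$-tuples, which follows from the definition of supremum without any continuity of $P$.
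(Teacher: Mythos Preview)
The paper does not supply its own proof of this theorem; it merely cites the result from \cite{CohenDahmenGreedy}. Your argument is correct and is essentially the volume-maximization (Auerbach-type) proof used in that reference: pick a near-optimal Kolmogorov subspace, select near-volume-maximizing projected elements, bound all coordinates via the exchange argument, and combine with the triangle inequality to get the factor $N+1$. Your handling of the degenerate case and the lack of continuity of $P$ is also appropriate.
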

    
    Translated into our framework, Theorem \ref{thm:InnerNWidth} states that for every $N\in \N,$ there exist solutions $u^{\mathrm{h}}(y^i)\approx u(y^i),$ ~$i=1,...,N$ of \eqref{eq:opeq} such that 
    \begin{align*}
        \sup_{y\in \Ycal} ~\inf_{w\in\mathrm{span}\left(u^{\mathrm{h}}(y^i)\right)_{i=1}^{N}} \left\|u_y-w \right\|_\Hil \leq (N+1)W_N(S(\mathcal{Y})).
    \end{align*}

    \begin{remark}\label{rem:NWidthExamples}
    We note that this bound is sharp for general $X,V$. However, it is not necessarily optimal for special instances of $S(\Ycal)$. If, for instance, $W_N(S(\Ycal))$ decays polynomially, then $\overline{W}_N(S(\Ycal))$ decays with the same rate (see \cite[Theorem 3.1.]{CohenDahmenGreedy}). Moreover, if $W_N(S(\Ycal)) \leq C e^{-cN^{\beta}}$ for some $c,C,\beta>0$ then by \cite[Corollary 3.3 (iii)]{DeVoreGreedy} we have $ \overline{W}_N(S(\Ycal)) \leq \tilde{C}e^{-\tilde{c}N^{\beta}}$ for some $\tilde{c},\tilde{C}>0.$ 
    \end{remark}
    
    Taking the discussion above as a justification, we assume from now on that for every $\epsilontilde\geq \epsilonhat$ there exists a RB space $U_\epsilontilde^{\mathrm{rb}}=\mathrm{span} \left(\psi_i \right)_{i=1}^{d(\epsilontilde)},$ which fulfills \eqref{eq:RBError}, where the linearly independent basis vectors  $\left(\psi_i \right)_{i=1}^{d(\epsilontilde)}$ are linear combinations of the high-fidelity basis vectors $\left(\varphi_i\right)_{i=1}^{D}$ in the sense that there exists a transformation matrix $\mathbf{V}_\epsilontilde\in \R^{D\times d(\epsilontilde)}$ such that 
\begin{align*}
    \left(\mathbf{\psi}_i\right)_{i=1}^{d(\epsilontilde)}= \left(\sum_{j=1}^{D} (\mathbf{V}_\epsilontilde)_{j,i}\varphi_j\right)_{i=1}^{d(\epsilontilde)} 
\end{align*}
and where $d(\epsilontilde)\ll D$ is chosen to be as small as possible, at least fulfilling $\mathrm{dist}\left(S(\Ycal),U_\epsilontilde^{\mathrm{rb}}\right)\leq \overline{W}_{d(\epsilontilde)}(S(\Ycal)).$
In addition, we assume that the vectors $(\psi_i)_{i=1}^{d(\epsilontilde)}$ form an orthonormal system in $\Hil,$ which is equivalent to the fact that the columns of $\mathbf{G}^{1/2}\mathbf{V}_\epsilontilde$ are orthonormal  (see \cite[Remark 4.1]{QuarteroniIntro}). This in turn implies
\begin{align}\label{eq:NormTrafo}
    \left\|\mathbf{G}^{1/2}\mathbf{V}_\epsilontilde \right\|_2=1, \quad \text{ for all } \epsilontilde\geq \epsilonhat  
\end{align}
as well as 
\begin{align}\label{eq:NormRB}
    \left\|\sum_{i=1}^{d(\epsilontilde)} \mathbf{c}_i \psi_i \right\|_{\Hil} = \left| \mathbf{c}\right|, \quad \text{ for all } \mathbf{c}\in \R^{d(\epsilontilde)}.
\end{align}
For the underlying discretization matrix, one can demonstrate (see for instance \cite[Section 3.4.1]{QuarteroniIntro}) that 
\begin{align*}
    \mathbf{B}_{y,\epsilontilde}^{\mathrm{rb}} \coloneqq  \left(b_y(\psi_j,\psi_i) \right)_{i,j=1}^{d(\epsilontilde)}=  \mathbf{V}_\epsilontilde^T  \mathbf{B}_{y,\epsilontilde}^{\mathrm{h}} \mathbf{V}_\epsilontilde\in  \R^{d(\epsilontilde)\times d(\epsilontilde)}, \quad \text{ for all } y\in \Ycal.
\end{align*}
Moreover, due to the symmetry and the coercivity of the underlying bilinear forms combined with the orthonormality of the basis vectors $(\psi_i)_{i=1}^{d(\epsilontilde)}$, one can show (see for instance \cite[Remark 3.5]{QuarteroniIntro}) that 
\begin{align} \label{eq:NormStiffness}
    C_{\mathrm{coer}} \leq \left\|\mathbf{B}_{y,\epsilontilde}^{\mathrm{rb}} \right\|_2 \leq C_{\mathrm{cont}}, \quad \text{ as well as }\quad \frac{1}{C_{\mathrm{cont}}} \leq  \left\|\left(\mathbf{B}_{y,\epsilontilde}^{\mathrm{rb}}\right)^{-1} \right\|_2\leq \frac{1}{C_{\mathrm{coer}}}, \quad \text{ for all } y\in \Ycal,
\end{align}
 implying that the condition number of the stiffness matrix with respect to the RB remains bounded independently of $y$ and the dimension $d(\epsilontilde).$ Additionally, the discretized right-hand side with respect to the RB is given by 
\begin{align*}
    \mathbf{f}_{y,\epsilontilde}^{\mathrm{rb}} \coloneqq \left( f_y(\psi_i) \right)_{i=1}^{d(\epsilontilde)}= \mathbf{V}_\epsilontilde^T \mathbf{f}_{y,\epsilontilde}^{\mathrm{h}} \in \R^{d(\epsilontilde)}
\end{align*}
and, by the Bessel inequality, we have that $\left|\mathbf{f}_{y,\epsilontilde}^{\mathrm{rb}} \right| \leq \left\|f_{y}\right\|_{\Hil^*} \leq C_{\mathrm{rhs}}.$ Moreover, let %
\begin{align*}
    \mathbf{u}_{y,\epsilontilde}^{\mathrm{rb}}\coloneqq \left(\mathbf{B}^{\mathrm{rb}}_{y,\epsilontilde}\right)^{-1} \mathbf{f}_{y,\epsilontilde}^{\mathrm{rb}} 
\end{align*}
be the coefficient vector of the Galerkin solution with respect to the RB space. Then, the Galerkin solution $u_{y,\epsilontilde}^{\mathrm{rb}}$ can be written as
    \begin{align*}
        u_{y,\epsilontilde}^{\mathrm{rb}}= \sum_{i=1}^{d(\epsilontilde)} \left(\mathbf{u}_{y,\epsilontilde}^{\mathrm{rb}} \right)_i \psi_i = \sum_{j=1}^{D} \left(\mathbf{V}_\epsilontilde \mathbf{u}_{y,\epsilontilde}^{\mathrm{rb}} \right)_j \varphi_j,
    \end{align*}
i.e. 
$$\tilde{\mathbf{u}}_{y,\epsilontilde}^{\mathrm{h}}\coloneqq \mathbf{V}_\epsilontilde  \mathbf{u}_{y,\epsilontilde}^{\mathrm{rb}}\in \R^{D}$$ 
is the coefficient vector of the RB solution if expanded with respect to the high-fidelity basis $\left(\varphi_i \right)_{i=1}^{D}.$
Finally, as in Equation \ref{eq:Cea}, we obtain
\begin{align*}
     \sup_{y\in \Ycal} \left\|u_y- u_{y,\epsilontilde}^{\mathrm{rb}}\right\|_\Hil \leq \sup_{y\in\Ycal} \frac{C_{\mathrm{cont}}}{C_{\mathrm{coer}}} \inf_{w\in U^{\mathrm{rb}}_{\epsilontilde}} \left\|u_y-w \right\|_{\Hil} \leq \frac{C_{\mathrm{cont}}}{C_{\mathrm{coer}}} \epsilontilde.
\end{align*}

In the following sections, we will emulate RBMs with NNs by showing that we are able to construct NNs which approximate the maps $\mathbf{u}_{\cdot,\epsilontilde}^{\mathrm{rb}},\tilde{\mathbf{u}}_{\cdot,\epsilontilde}^{\mathrm{h}}$ such that their complexity
depends only on the size of the reduced basis and at most linearly on $D$.  The key ingredient will be the construction of small NNs implementing an approximate matrix inversion based on Richardson iterations in Section \ref{sec:NNCalc}. In Section \ref{sec:NNPDE}, we then proceed with building the NNs the realizations of which approximate the maps  $\mathbf{u}_{\cdot,\epsilontilde}^{\mathrm{rb}},\tilde{\mathbf{u}}_{\cdot,\epsilontilde}^{\mathrm{h}}$, respectively. 

\section{Neural Network Calculus }\label{sec:NNCalc}
The goal of this chapter is to emulate the matrix inversion by NNs. In Section \ref{subsec:BasicNN}, we introduce some basic notions connected to NNs as well as some basic operations one can perform with these. In Section \ref{subsec:matrInv}, we state a result which shows the existence of NNs the ReLU-realizations of which take a matrix $\mathbf{A}\in \R^{d\times d},~\|\mathbf{A}\|_2<1$ as their input and calculate an approximation of $\left(\mathbf{Id}_{\R^d}-\mathbf{A}\right)^{-1}$ based on its Neumann series expansion. The associated proofs can be found in Appendix \ref{app:NNCalculusProofs}.  

\subsection{Basic Definitions and Operations} \label{subsec:BasicNN}

We start by introducing a formal definition of a NN. Afterward, we introduce several operations, such as parallelization and concatenation that can be used to assemble complex NNs out of simpler ones.
Unless stated otherwise we follow the notion of \cite{PetV2018OptApproxReLU} where most of this formal framework was introduced.
First, we introduce a terminology for NNs that allows us to differentiate between a NN
as a family of weights and the function implemented by the NN. This implemented function will be
called the realization of the NN. 
\begin{definition}\label{def:NeuralNetworks}
Let $n, L\in \N$.
A \emph{NN $\Phi$ with input dimension $\mathrm{dim}_{\mathrm{in}}\left(\Phi\right)\coloneqq n$ and $L$ layers}
is a sequence of matrix-vector tuples
\[
  \Phi = \big( (\mathbf{A}_1,\mathbf{b}_1), (\mathbf{A}_2,\mathbf{b}_2), \dots, (\mathbf{A}_L, \mathbf{b}_L) \big),
\]
where $N_0 = n$ and $N_1, \dots, N_{L} \in \N$, and where each
$\mathbf{A}_\ell$ is an $N_{\ell} \times N_{\ell-1}$ matrix,
and $\mathbf{b}_\ell \in \R^{N_\ell}$.

If $\Phi$ is a NN as above, $K \subset \R^n$, and if
$\varrho\colon \R \to \R$ is arbitrary, then we define the associated
\emph{realization of $\Phi$ with activation function $\varrho$ over $K$}
(in short, the $\varrho$\emph{-realization of $\Phi$ over $K$})
as the map $\Realization_{\varrho}^K(\Phi)\colon K \to \R^{N_L}$ such that
\[
  \Realization_{\varrho}^K(\Phi)(\mathbf{x}) = \mathbf{x}_L ,
\]
where $\mathbf{x}_L$ results from the following scheme:
\begin{equation*}
  \begin{split}
    \mathbf{x}_0 &\coloneqq \mathbf{x}, \\
    \mathbf{x}_{\ell} &\coloneqq \varrho(\mathbf{A}_{\ell} \, \mathbf{x}_{\ell-1} + \mathbf{b}_\ell),
    \qquad \text{ for } \ell = 1, \dots, L-1,\\
    \mathbf{x}_L &\coloneqq \mathbf{A}_{L} \, \mathbf{x}_{L-1} + \mathbf{b}_{L},
  \end{split}
\end{equation*}
and where $\varrho$ acts componentwise, that is,
$\varrho(\mathbf{v}) = (\varrho(\mathbf{v}_1), \dots, \varrho(\mathbf{v}_m))$
for any $\mathbf{v} = (\mathbf{v}_1, \dots, \mathbf{v}_m) \in \R^m$.

We call $N(\Phi) \coloneqq n + \sum_{j = 1}^L N_j$ the
\emph{number of neurons of the NN} $\Phi$ and $L = L(\Phi)$ the
\emph{number of layers}. For $\ell \leq L$ we call $M_\ell(\Phi) \coloneqq \|\mathbf{A}_\ell\|_0 + \|\mathbf{b}_\ell\|_0$ the \emph{number of weights in the $\ell$-th layer} and we define $M(\Phi)\coloneqq \sum_{\ell = 1}^L M_\ell(\Phi)$, which we call the \emph{number of weights of $\Phi.$}
Moreover, we refer to $\mathrm{dim}_{\mathrm{out}}\left(\Phi\right)\coloneqq N_L $ as 
\emph{the output dimension} of $\Phi$.
\end{definition}

First of all, we note that it is possible to concatenate two NNs in the following way.
\begin{definition}
Let $L_1, L_2 \in \N$ and let $\vphantom{\sum_j}
\Phi^1 = \big( (\mathbf{A}_1^1,\mathbf{b}_1^1), \dots, (\mathbf{A}_{L_1}^1,\mathbf{b}_{L_1}^1) \big),
\Phi^2 = \big((\mathbf{A}_1^2,\mathbf{b}_1^2), \dots, (\mathbf{A}_{L_2}^2,\mathbf{b}_{L_2}^2) \big)$
be two NNs such that the input layer of $\Phi^1$ has the same
dimension as the output layer of $\Phi^2$.
Then, $\Phi^1 \conc \Phi^2$ denotes the following $L_1+L_2-1$ layer
NN:
\[
  \Phi^1 \conc \Phi^2
  \coloneqq \big(
       (\mathbf{A}_1^2,\mathbf{b}_1^2),
       \dots,
       (\mathbf{A}_{L_2-1}^2,\mathbf{b}_{L_2-1}^2),
       (\mathbf{A}_{1}^1 \mathbf{A}_{L_2}^2, \mathbf{A}_{1}^1 \mathbf{b}^2_{L_2} + \mathbf{b}_1^1),
       (\mathbf{A}_{2}^1, \mathbf{b}_{2}^1),
       \dots,
       (\mathbf{A}_{L_1}^1, \mathbf{b}_{{L_1}}^1)
     \big). 
\]
We call $\Phi^1 \conc \Phi^2$ the
\emph{concatenation of $\Phi^1,$ $\Phi^2$}.
\end{definition}

In general, there is no bound on $M(\Phi^1 \conc \Phi^2)$ that is linear in $M(\Phi^1)$ and $M(\Phi^2)$. 
For the remainder of the paper, let $\varrho$ be given by the ReLU activation function, i.e., $\varrho(x)=\max\{x,0\}$ for $x\in \R.$ 
We will see in the following, that we are able to introduce an alternative concatenation which helps us to control the number of non-zero weights. 
Towards this goal, we give the following result which shows that we can construct NNs the ReLU-realization of which is the identity function on $\R^n.$ 

\begin{lemma}\label{lem:ReLUidentity}
For any $L\in \N$ there exists a NN $\Phi^{\identity}_{n,L}$ with input dimension $n,$ output dimension $n$ and at most $2n L$ non-zero, $\{-1,1\}$-valued weights such that
\[
    \Realization^{\R^n}_\varrho\left(\Phi^{\identity}_{n,L} \right)=\mathbf{Id}_{\R^n}.
\]
\end{lemma}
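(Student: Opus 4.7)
The proof will rest on the elementary ReLU identity $x = \varrho(x) - \varrho(-x)$ for every $x\in\R$, together with the fact that $\varrho$ is the identity on $[0,\infty)$, so that nonnegative values can be propagated through further ReLU layers without distortion.

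For the base case $L=1$, the plan is to simply take $\Phi^{\identity}_{n,1} \coloneqq \big((\mathbf{Id}_{\R^n},\mathbf{0}_{\R^n})\big)$: by Definition \ref{def:NeuralNetworks}, the last affine map is not composed with $\varrho$, so the realization is literally $\mathbf{x}\mapsto \mathbf{x}$. This uses $n\leq 2n\cdot 1$ nonzero weights, all equal to $1$.

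For $L\geq 2$, I would define
\begin{equation*}
\mathbf{A}_1 \coloneqq \begin{pmatrix}\mathbf{Id}_{\R^n}\\ -\mathbf{Id}_{\R^n}\end{pmatrix}\in\R^{2n\times n},\qquad
\mathbf{A}_\ell \coloneqq \mathbf{Id}_{\R^{2n}}\in\R^{2n\times 2n}\ \ (2\leq \ell\leq L-1),\qquad
\mathbf{A}_L \coloneqq \bigl(\mathbf{Id}_{\R^n}\ \ -\mathbf{Id}_{\R^n}\bigr)\in\R^{n\times 2n},
\end{equation*}
with all biases set to zero. I would then track the propagation: after the first affine map we obtain $(\mathbf{x},-\mathbf{x})\in\R^{2n}$, and applying $\varrho$ componentwise gives $\mathbf{v}\coloneqq (\varrho(\mathbf{x}),\varrho(-\mathbf{x}))$, which is componentwise nonnegative. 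For each intermediate layer $2\leq\ell\leq L-1$, the pre-activation is $\mathbf{Id}_{\R^{2n}}\mathbf{v}=\mathbf{v}$, and since $\mathbf{v}\geq 0$ componentwise we have $\varrho(\mathbf{v})=\mathbf{v}$. Finally, the $L$-th layer (no $\varrho$ applied) produces $\varrho(\mathbf{x})-\varrho(-\mathbf{x})=\mathbf{x}$, which is the desired identity.

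It remains to count weights: $\mathbf{A}_1$ and $\mathbf{A}_L$ each contribute $2n$ nonzero entries in $\{-1,1\}$, every intermediate $\mathbf{A}_\ell$ contributes $2n$ nonzero entries equal to $1$, and all biases are zero. Summing yields $M(\Phi^{\identity}_{n,L})=2n\cdot L$, matching the bound. There is no real obstacle here; the only subtlety is the book-keeping convention that no activation is applied after the last affine map, which is exactly what makes both the $L=1$ case and the final subtraction step work cleanly.
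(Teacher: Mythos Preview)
Your proof is correct and is exactly the standard construction that the paper's bound of $2nL$ nonzero $\{-1,1\}$-valued weights is tailored to (the paper states the lemma without giving a proof, but the weight count and weight values pin down precisely this split-into-positive-and-negative-parts architecture). The case split at $L=1$, the propagation argument using $\varrho(\mathbf{v})=\mathbf{v}$ for nonnegative $\mathbf{v}$, and the final weight count are all handled cleanly.
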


We now introduce the sparse concatenation of two NNs.

\begin{definition}\label{def:sparseconc}
Let $\Phi^1,\Phi^2$ be two NNs such that the output dimension of $\Phi^2$ and the input dimension of $\Phi^1$ equal $n\in \N$. Then the \emph{sparse concatenation of $\Phi^1$ and $\Phi^2$} is defined as 
\[
    \Phi^1\odot \Phi^2 \coloneqq \Phi^1\conc \Phi^{\mathbf{Id}}_{n,1}\conc \Phi^2.
\]
\end{definition}

We will see later in Lemma \ref{lem:size} the properties of the sparse concatenation of NNs. We proceed with the second operation that we can perform with NNs. This operation is called parallelization.

\begin{definition}[\cite{PetV2018OptApproxReLU,SchwabOption}]
Let $\Phi^1,...,\Phi^k$ be NNs which have equal input dimension such that there holds $\Phi^i=\left((\mathbf{A}^i_1,\mathbf{b}^i_1),...,(\mathbf{A}^i_L,\mathbf{b}^i_L) \right)$ for some $L\in \N$. Then, we define the \emph{parallelization of $\Phi^1,...,\Phi^k$} by  
\begin{align*}
    \Paral\left(\Phi^1,...,\Phi^k\right)\coloneqq \left(\left(\left( \begin{array}{cccc}
\mathbf{A}^1_1 &  &  &    \\
 & \mathbf{A}^2_1 &  &    \\
 &  & \ddots &   \\
 &  &  & \mathbf{A}^k_1 
\end{array} \right) , \begin{pmatrix}\mathbf{b}^1_1 \\ \mathbf{b}^2_1 \\ \vdots \\ \mathbf{b}^k_1 \end{pmatrix}\right),...,\left(\left(\begin{array}{cccc}
\mathbf{A}^1_L &  &  &    \\
 & \mathbf{A}^2_L &  &    \\
 &  & \ddots &   \\
 &  &  & \mathbf{A}^k_L 
\end{array} \right) , \begin{pmatrix}\mathbf{b}^1_L \\ \mathbf{b}^2_L \\ \vdots \\ \mathbf{b}^k_L \end{pmatrix} \right) \right). 
\end{align*}
Now, let $\Phi$ be a NN and $L\in \N$ such that $L(\Phi)\leq L.$ Then, define the NN 
\begin{align*}
    E_L(\Phi)\coloneqq \begin{cases} \Phi, \qquad &\text{ if } L(\Phi)=L, \\
    \Phi^{\mathbf{Id}}_{\mathrm{dim}_{\mathrm{out}}(\Phi),L-L(\Phi)}\odot\Phi,\qquad &\text{ if } L(\Phi)<L.\end{cases}
\end{align*}
Finally, let $\tilde{\Phi}^1,...,\tilde{\Phi}^k$ be NNs which have the same input dimension and let 
\[\tilde{L}\coloneqq \max\left\{ L\left(\tilde{\Phi}^1\right),...,L\left(\tilde{\Phi}^k\right)\right\}.\] 
Then, we define 
\begin{align*}
    \Paral\left(\tilde{\Phi}^1,...,\tilde{\Phi}^k \right)\coloneqq \Paral\left(E_{\tilde{L}}\left(\tilde{\Phi}^1 \right),...,E_{\tilde{L}}\left(\tilde{\Phi}^k \right)\right).
\end{align*}
We call $\Paral\left(\tilde{\Phi}^1,...,\tilde{\Phi}^k \right)$ the \emph{parallelization of} $\tilde{\Phi}^1,...,\tilde{\Phi}^k.$
\end{definition}

The following lemma was established in \cite[Lemma 5.4]{SchwabOption} and examines the properties of the sparse concatenation as well as of the parallelization of NNs.

\begin{lemma}[\cite{SchwabOption}] 
\label{lem:size}
Let $\Phi^1,...,\Phi^k$ be NNs. 
\begin{itemize}
    \item[(a)] If the input dimension of $\Phi^1$, which shall be denoted by $n_1$, equals the output dimension of $\Phi^2,$ and $n_2$ is the input dimension of $\Phi^2,$ then
    $$
    \mathrm{R}^{\R^{n_1}}_\varrho\left(\Phi^1\right) \circ \mathrm{R}^{\R^{n_2}}_\varrho\left(\Phi^2\right) = \mathrm{R}^{\R^{n_2}}_\varrho\left(\Phi^1 \odot \Phi^2\right)
    $$
    and
    \begin{enumerate}[(i)]
        \item $L\left(\Phi^1 \odot\Phi^2\right) \leq L\left(\Phi^1\right)+L\left(\Phi^2\right),$
        \item $M\left(\Phi^1 \odot\Phi^2\right)\leq M\left(\Phi^1\right)+ M\left(\Phi^2\right)+ M_1\left(\Phi^1\right)+M_{L\left(\Phi^2\right)}\left(\Phi^2\right)\leq 2M\left(\Phi^1\right)+2M\left(\Phi^2\right),$
        \item $M_1\left(\Phi^1\odot \Phi^2\right) =M_1\left(\Phi^2\right),$
        \item $M_{L\left(\Phi^1\odot \Phi^2\right)}\left(\Phi^1\odot \Phi^2\right) = M_{L\left(\Phi^1\right)}\left(\Phi^1\right).$
    \end{enumerate}
    \item[(b)] If the input dimension of $\Phi^i$, denoted by $n$, equals the input dimension of $\Phi^j,$ for all $i,j$, then for the NN $\Paral\left(\Phi^1,\Phi^2,...,\Phi^k\right)$ and all $\mathbf{x}_1,\dots \mathbf{x}_k\in \R^n$ we have
    $$
    \mathrm{R}^{\R^n}_\varrho\left(\Paral\left(\Phi^1,\Phi^2,...,\Phi^k\right)\right)(\mathbf{x}_1,\dots,\mathbf{x}_k) = \left( \mathrm{R}^{\R^n}_\varrho\left(\Phi^1\right)(\mathbf{x}_1), \mathrm{R}^{\R^n}_\varrho\left(\Phi^2\right)(\mathbf{x}_2) , \dots, \mathrm{R}^{\R^n}_\varrho\left(\Phi^k\right)(\mathbf{x}_k)\right)
    $$
    as well as
    \begin{enumerate}[(i)]
        \item $L\left(\Paral\left(\Phi_1,\Phi^2,...,\Phi^k\right)\right) = \max_{i=1,...,k}L\left(\Phi^i\right),$
        \item $M\left(\Paral\left(\Phi^1,\Phi^2,...,\Phi^k\right)\right)\leq 2\left(\sum_{i=1}^k M\left(\Phi^i\right) \right)+4\left(\sum_{i=1}^k \mathrm{dim}_{\mathrm{out}}\left(\Phi^i\right) \right)\max_{i=1,...,k}L\left(\Phi^i\right),$
         \item $M\left(\Paral\left(\Phi^1,\Phi^2,...,\Phi^k\right)\right) =  \sum_{i=1}^k M\left(\Phi^i\right)$, if $L\left(\Phi^1\right) = L\left(\Phi^2\right)  = \ldots = L\left(\Phi^k\right)$,
        \item $M_1\left(\Paral\left(\Phi^1,\Phi^2,...,\Phi^k\right)\right) =\sum_{i=1}^k M_1\left(\Phi^i\right),$
        \item $M_{L\left(\Paral\left(\Phi^1,\Phi^2,...,\Phi^k\right)\right)}\left(\Paral\left(\Phi^1,\Phi^2,...,\Phi^k\right)\right) \leq  \sum_{i=1}^k \max\left\{2\mathrm{dim}_{\mathrm{out}}\left(\Phi^i\right),M_{L\left(\Phi^i\right)}\left(\Phi^i\right) \right\},$
        \item $M_{L\left(\Paral\left(\Phi^1,\Phi^2,...,\Phi^k\right)\right)}\left(\Paral\left(\Phi^1,\Phi^2,...,\Phi^k\right)\right)=\sum_{i=1}^k M_{L\left(\Phi^i\right)}\left(\Phi^i\right),$ if $L\left(\Phi^1\right) = L\left(\Phi^2\right)  = \ldots = L\left(\Phi^k\right)$.
    \end{enumerate}
\end{itemize}
\end{lemma}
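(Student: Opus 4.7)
The plan is to verify each claim by directly unpacking the definitions of concatenation, sparse concatenation, and parallelization, using Lemma \ref{lem:ReLUidentity} as the only nontrivial input. The statements are combinatorial in nature, so there is no conceptual trick; the work lies in careful bookkeeping of non-zero matrix entries and layer counts as the networks are glued together.

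For part (a), I would start by verifying that for plain concatenation, $\Realization^{K}_\varrho(\Phi^1 \conc \Phi^2) = \Realization^{K}_\varrho(\Phi^1) \circ \Realization^{K}_\varrho(\Phi^2)$ whenever the interface dimensions match; this is an immediate unwinding of the realization scheme, since the merged middle tuple $(\mathbf{A}^1_1 \mathbf{A}^2_{L(\Phi^2)}, \mathbf{A}^1_1 \mathbf{b}^2_{L(\Phi^2)} + \mathbf{b}^1_1)$ exactly implements the composition of the last affine map of $\Phi^2$ with the first affine map of $\Phi^1$, while the activations at all other positions remain intact. Combining this with the identity property of $\Phi^{\identity}_{n,1}$ yields the realization equality for $\Phi^1 \odot \Phi^2$. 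The depth bound (i) follows by summing depths of the three pieces and subtracting for the merges. For the weight bound (ii), the role of the identity buffer is essential: since its weight matrices have at most $2n$ non-zero entries by Lemma \ref{lem:ReLUidentity}, when the identity buffer is merged with the first layer of $\Phi^1$ and the last layer of $\Phi^2$ during the triple concatenation, the resulting merged layers have weight counts bounded in terms of $M_1(\Phi^1)$ and $M_{L(\Phi^2)}(\Phi^2)$ rather than by an uncontrolled matrix product. Claims (iii) and (iv) follow by identifying which original layers end up at the first and last positions of the composite.

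For part (b), I would first handle the equal-depth case $L(\Phi^1) = \dots = L(\Phi^k)$. The block-diagonal structure of the parallelization matrices ensures that each coordinate block of the input propagates through its own subnetwork, so the output stacks the realizations $\Realization^{\R^n}_\varrho(\Phi^i)$ and the non-zero counts are exact layerwise sums, giving (iii) and (vi) directly. The general case is reduced to this by observing that $E_L$ sparsely concatenates each shallower $\Phi^i$ with an identity block of the appropriate depth, which preserves the realization by part (a) and Lemma \ref{lem:ReLUidentity}. Depth bound (i) is immediate. The weight bounds (ii), (iv), (v) split into the sum of individual weights plus the cost of padding, where each padded layer contributes at most $4 \mathrm{dim}_{\mathrm{out}}(\Phi^i)$ weights, using the bound of Lemma \ref{lem:ReLUidentity} together with part (a)(ii). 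The main obstacle is the case analysis for the layer-specific bounds (iv)--(vi): one has to distinguish, for each $\Phi^i$, whether its depth already equals $L$ or whether padding has been attached, and whether that padding sits at the first or last position of the composite. None of these steps is conceptually hard, but the notation must be managed systematically to avoid off-by-one errors in the merged layers.
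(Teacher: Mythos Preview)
The paper does not provide its own proof of this lemma; it simply cites \cite[Lemma 5.4]{SchwabOption} and states the result. Your proposed direct verification from the definitions of concatenation, sparse concatenation, and parallelization, using Lemma \ref{lem:ReLUidentity} to control the identity buffer, is exactly the standard approach in the referenced literature and is correct in outline. There is nothing to compare against in this paper, and your plan is sound.
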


\subsection{A Neural Network Based Approach Towards Matrix Inversion} \label{subsec:matrInv}

The goal of this subsection is to emulate the inversion of square matrices by realizations of NNs which are comparatively small in size. In particular, Theorem \ref{thm:Inverse} shows that, for $d\in \N,~\epsilon\in (0,1/4),$ and $\delta \in (0,1),$ we are able to efficiently construct NNs $\Phi^{1-\delta,d}_{\mathrm{inv};\epsilon}$ the ReLU-realization of which approximates the map 
\[
\left\{\mathbf{A}\in \R^{d\times d}\colon\|\mathbf{A}\|_2\leq 1-\delta \right\}\to \R^{d\times d},~\mathbf{A}\mapsto \left(\mathbf{Id}_{\R^d}-\mathbf{A} \right)^{-1}=\sum_{k=0}^\infty \mathbf{A}^k
\]
up to an $\|\cdot \|_2$- error of $\epsilon$.

To stay in the classical NN setting, we employ vectorized matrices in the remainder of this paper. Let $\mathbf{A}\in \R^{d\times l}.$ We write
$$
    \vect(\mathbf{A})\coloneqq (\mathbf{A}_{1,1},...,\mathbf{A}_{d,1},...,\mathbf{A}_{1,l},...,\mathbf{A}_{d,l})^T\in \R^{dl}.
$$  
Moreover, for a vector $\mathbf{v}=(\mathbf{v}_{1,1},...,\mathbf{v}_{d,1},...,\mathbf{v}_{1,d},...,\mathbf{v}_{d,l})^T\in \R^{d l }$ we set
$$
       \mathbf{matr}(\mathbf{v}) \coloneqq (\mathbf{v}_{i,j})_{i=1,...,d,~j=1,...,l}\in \R^{d\times l}.
$$
In addition, for $d,n,l\in \N$ and $Z>0$ we set 
$$
K^Z_{d,n,l}\coloneqq \left\{(\vect(\mathbf{A}),\vect(\mathbf{B}))\colon(\mathbf{A},\mathbf{B})\in \R^{d\times n}\times \R^{n\times l} ,~\|\mathbf{A}\|_2,\|\mathbf{B}\|_2\leq Z\right\}
$$
as well as
$$
K^Z_{d}\coloneqq \left\{\vect(\mathbf{A})\colon\mathbf{A}\in \R^{d\times d},~\|\mathbf{A}\|_2\leq Z\right\}.
$$

The basic ingredient for the construction of NNs emulating a matrix inversion is the following result about NNs emulating the multiplication of two matrices.

\begin{proposition}\label{prop:Multiplikation}
Let $d,n,l\in \N,~\epsilon\in (0,1),~Z> 0.$ There exists a NN $\Phi^{Z,d,n,l}_{\mathrm{mult};\epsilon}$ with $n\cdot(d+l) $- dimensional input, $dl $-dimensional output such that, for an absolute constant $C_{\mathrm{mult}}>0,$ the following properties are fulfilled:

\begin{enumerate}[(i)]
    \item $L\left(\mult\right)\leq C_{\mathrm{mult}}\cdot \left(\log_2\left(1/\epsilon\right) +\log_2\left(n\sqrt{dl}\right)+\log_2\left(\max\left\{1,Z\right\}\right)\right)$,
    \item $M\left(\mult\right) \leq C_{\mathrm{mult}}\cdot \left(\log_2\left(1/\epsilon\right) +\log_2\left(n\sqrt{dl}\right)+\log_2\left(\max\left\{1,Z\right\}\right)\right) dnl, $
    \item $M_1\left(\mult\right) \leq C_{\mathrm{mult}} dnl, \qquad \text{ as well as } \qquad M_{L\left(\mult\right)}\left(\mult\right)  \leq C_{\mathrm{mult}} dnl$,
    \item $\sup_{(\vect(\mathbf{A}),\vect(\mathbf{B}))\in K^Z_{d,n,l}}\left\|\mathbf{A} \mathbf{B}- \mathbf{matr}\left(\Realization_{\varrho}^{K^Z_{d,n,l}}\left(\Phi^{Z,d,n,l}_{\mathrm{mult};\epsilon}\right)(\vect(\mathbf{A}),\vect(\mathbf{B}))\right)\right\|_2 \leq \epsilon$,
    \item for any $(\vect(\mathbf{A}),\vect(\mathbf{B}))\in  K^Z_{d,n,l}$ we have 
    \[
    \left\| \mathbf{matr}\left(\mathrm{R}_{\varrho}^{K^Z_{d,n,l}}\left(\mult\right) (\vect(\mathbf{A}),\vect(\mathbf{B}))\right)\right\|_2\leq \epsilon+\|\mathbf{A}\|_2 \|\mathbf{B}\|_2\leq \epsilon+Z^2 \leq 1+Z^2.
    \]
\end{enumerate}
\end{proposition}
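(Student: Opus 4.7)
The plan is to reduce approximate matrix multiplication to $dnl$ independent scalar multiplications performed in parallel, and then read off the size bounds from Lemma \ref{lem:size}.

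First I would build the scalar multiplication block. Using the Yarotsky construction sketched in Subsection \ref{sec:SimplifiedOverView}, there is a ReLU network approximating $x\mapsto x^2$ on $[0,1]$ to accuracy $\eta$ with depth and number of weights both of order $\log_2(1/\eta)$. The parallelogram identity $xy=\tfrac14((x+y)^2-(x-y)^2)$, together with the fact that $\tfrac12(x\pm y)\in[-1,1]$ whenever $(x,y)\in[-1,1]^2$, turns this into a network $\Phi^{\mathrm{mult},1}_{\eta}$ approximating scalar multiplication on $[-1,1]^2$ with the same order. To lift to inputs bounded by $Z$, I would exploit $xy=Z^2\cdot (x/Z)(y/Z)$: precomposing with the linear map $(x,y)\mapsto (x/Z,y/Z)$ (absorbed into the first affine layer) and rescaling the output by $Z^2$ (absorbed into the last affine layer) yields a network $\Phi^{\mathrm{mult},Z}_{\eta}$ of depth and size of order $\log_2(1/\eta)+\log_2\max\{1,Z\}$ and uniform error $\eta$ on $[-Z,Z]^2$.

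Next I would set the accuracy budget. If $\|\mathbf{A}\|_2,\|\mathbf{B}\|_2\leq Z$, then every entry of $\mathbf{A}$ and $\mathbf{B}$ is bounded by $Z$ in absolute value, so the scalar block applies. Replacing each true product $\mathbf{A}_{ik}\mathbf{B}_{kj}$ with its scalar approximation of accuracy $\eta$ and summing over $k$ yields an entrywise error at most $n\eta$ in the approximation of $\mathbf{AB}$. The spectral norm of this error matrix $\mathbf{E}$ is controlled by its Frobenius norm, $\|\mathbf{E}\|_2\leq\|\mathbf{E}\|_F\leq \sqrt{dl}\cdot n\eta$, so the choice $\eta\coloneqq \epsilon/(n\sqrt{dl})$ already gives (iv). This choice produces precisely the sum $\log_2(1/\epsilon)+\log_2(n\sqrt{dl})+\log_2\max\{1,Z\}$ that appears in (i) and (ii).

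The network $\Phi^{Z,d,n,l}_{\mathrm{mult};\epsilon}$ itself is assembled by two linear modifications of $\Paral$ applied to $dnl$ copies of $\Phi^{\mathrm{mult},Z}_{\eta}$, one copy per triple $(i,k,j)$. The first affine layer of the parallelization is fed by the selection map $(\vect(\mathbf{A}),\vect(\mathbf{B}))\mapsto (\mathbf{A}_{ik},\mathbf{B}_{kj})_{i,k,j}$, and the final affine layer is composed with the aggregation map $(p_{ikj})\mapsto (\sum_k p_{ikj})_{i,j}$. Because the Yarotsky scalar block has $O(1)$ neurons in its first and last layers and a linear final affine layer, these two modifications contribute $O(dnl)$ nonzero entries in the first layer (two input reads per scalar block) and $O(dnl)$ nonzero entries in the last layer (an $n$-term sum for each of the $dl$ outputs, with $O(1)$ predecessors per sum), giving (iii). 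Depth and total size now follow from Lemma \ref{lem:size}(b) applied to the parallel copies, establishing (i) and (ii), and (v) is immediate from (iv) together with $\|\tilde{\mathbf{C}}\|_2\leq\|\mathbf{AB}\|_2+\epsilon\leq Z^2+\epsilon$. The main obstacle I expect is bookkeeping: merging the input-selection and output-aggregation linear maps into the parallelized scalar blocks without inflating $M_1$ or $M_{L}$ beyond $O(dnl)$, and tracking the accuracy through the $Z$-rescaling so that $\log_2\max\{1,Z\}$ enters additively. Once these two merges are performed carefully, all bounds follow by direct application of Lemma \ref{lem:size}.
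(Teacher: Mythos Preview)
Your proposal is correct and follows essentially the same approach as the paper: reduce to $dnl$ parallel scalar multiplications with accuracy $\epsilon/(n\sqrt{dl})$, merge the input-selection and output-aggregation linear maps into the first and last layers, and read off the bounds from Lemma \ref{lem:size}. The paper organizes the parallelization in two nested stages (first over $k$ for fixed $(i,j)$, then over $(i,j)$) and handles the bookkeeping obstacle you flag via Lemma \ref{lem:TrivialConcEstimate}, which shows that concatenating with a row vector on the left, or with a matrix having at most one nonzero per row on the right, cannot increase any layerwise weight count.
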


Based on Proposition \ref{prop:Multiplikation}, we construct in Appendix \ref{app:InverseProof} NNs emulating the map $\mathbf{A}\mapsto \mathbf{A}^k$ for square matrices $\mathbf{A}$ and $k\in \N.$ This construction is then used to prove the following result.

\begin{theorem} \label{thm:Inverse}
 For $\epsilon,\delta\in (0,1)$ define
\[
m(\epsilon,\delta)\coloneqq \left\lceil \frac{\log_2\left( 0.5 \epsilon\delta \right)}{\log_{2}(1-\delta)}\right\rceil.
\]
There exists a universal constant $C_{\mathrm{inv}} > 0$ such that for every $d\in \N,$ $\epsilon\in (0,1/4)$ and every $\delta\in (0,1)$ there exists  a NN $\Phi^{1-\delta, d}_{\mathrm{inv};\epsilon}$ with $d^2$-dimensional input, $d^2$-dimensional output and the following properties:
\begin{enumerate}[(i)]
    \item $L\left(\Phi^{1-\delta, d}_{\mathrm{inv};\epsilon}\right)\leq C_{\mathrm{inv}} \log_2\left(m(\epsilon,\delta) \right)\cdot  \left(\log_2\left(1/ \epsilon\right)+\log_2\left(m(\epsilon,\delta) \right)+\log_2(d)\right)$,
    \item $M\left(\Phi^{1-\delta, d}_{\mathrm{inv};\epsilon}\right)\leq C_{\mathrm{inv}} m(\epsilon,\delta) \log_2^2(m(\epsilon,\delta))d^3\cdot  \left(\log_2\left(1/\epsilon\right)+ \log_2\left(m(\epsilon,\delta)\right) +\log_2(d) \right)$,
    
    \item $\sup_{\vect(\mathbf{A})\in K^{1-\delta}_{d}} \left\|\left(\mathbf{Id}_{\R^{d}}-\mathbf{A} \right)^{-1} - \mathbf{matr}\left( \Realization_\varrho^{K^{1-\delta}_{d}}\left(\Phi^{1-\delta,d}_{\mathrm{inv};\epsilon}\right)(\vect(\mathbf{A}))\right)   \right\|_2 \leq \epsilon,$
    \item for any $\vect(\mathbf{A})\in K^{1-\delta}_d$ we have 
    \begin{align*}
        \left\| \mathbf{matr}\left( \Realization_\varrho^{K^{1-\delta}_{d}}\left(\Phi^{1-\delta,d}_{\mathrm{inv};\epsilon}\right)(\vect(\mathbf{A}))\right)   \right\|_2\leq \epsilon + \left\|\left(\mathbf{Id}_{\R^d}-\mathbf{A}\right)^{-1} \right\|_2 \leq \epsilon+ \frac{1}{1-\|\mathbf{A}\|_2} \leq \epsilon + \frac{1}{\delta}.
    \end{align*}
\end{enumerate}
\end{theorem}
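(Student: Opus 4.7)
The plan is to approximate $(\mathbf{Id}_{\R^d}-\mathbf{A})^{-1}$ by its truncated Neumann series $S_m(\mathbf{A}) \coloneqq \sum_{k=0}^{m}\mathbf{A}^k$ and to realize this matrix polynomial by chaining together copies of the multiplication network $\Phi^{Z,d,d,d}_{\mathrm{mult};\eta}$ supplied by Proposition \ref{prop:Multiplikation}. The first step is the tail estimate: since $\|\mathbf{A}\|_2 \leq 1-\delta$ one has
\begin{align*}
\bigl\|(\mathbf{Id}_{\R^d}-\mathbf{A})^{-1}-S_m(\mathbf{A})\bigr\|_2 \;\leq\; \sum_{k=m+1}^{\infty}(1-\delta)^k \;=\; \frac{(1-\delta)^{m+1}}{\delta},
\end{align*}
and the definition of $m(\epsilon,\delta)$ is precisely tuned so that the right-hand side is at most $\epsilon/2$.

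For the network construction I would use iterated squaring. Setting $s \coloneqq \lceil \log_2(m(\epsilon,\delta)+1)\rceil$, I would first form the squares $\mathbf{B}_j = \mathbf{A}^{2^j}$, $j=0,\ldots,s-1$, by sparse-concatenating $s-1$ copies of $\Phi^{Z,d,d,d}_{\mathrm{mult};\eta}$ with $Z = 1+\eta$, which is admissible because $\|\mathbf{A}^{2^j}\|_2\leq 1$ plus accumulated multiplication error. Each power $\mathbf{A}^k$ with $k\in\{0,\ldots,m(\epsilon,\delta)\}$ is then the product of those $\mathbf{B}_j$ selected by the binary digits of $k$; I would compute these products in parallel via a further stage of at most $s-1$ sparse-concatenated multiplications per power. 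A final affine map (absorbed into the last linear layer of the already-sparsely-concatenated block) sums the resulting $\mathbf{A}^k$ to produce $S_m(\mathbf{A})$.

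The main technical obstacle is controlling the propagation of errors across the many approximate multiplications. The idea is a standard inductive telescoping: if each true factor in a product of $r$ matrices of spectral norm at most $1+\eta$ is replaced by its NN approximation, the resulting spectral error is bounded by $O(r\eta)$ provided $r\eta$ is small. Hence every $\mathbf{A}^k$ is approximated to accuracy $O(s\eta) = O(\eta\log m)$, and summing $m+1$ of these contributes cumulative error $O(m\eta\log m)$. Choosing $\eta$ of order $\epsilon/(m\log m)$ keeps this below $\epsilon/2$, which combined with the tail bound yields (iii); claim (iv) then follows from $\|(\mathbf{Id}_{\R^d}-\mathbf{A})^{-1}\|_2 \leq 1/\delta$ and the triangle inequality applied to the output of the constructed network.

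For the size and depth bookkeeping I would appeal to Lemma \ref{lem:size}. With the above choice of $\eta$ one has $\log_2(1/\eta) = O(\log(1/\epsilon)+\log m)$ and $Z\leq 2$, so Proposition \ref{prop:Multiplikation} gives each multiplication depth $O(\log(1/\epsilon)+\log m + \log d)$ and $O(d^3)$ weights per layer up to the same logarithmic factor. Sparse-concatenating the $O(\log m)$ sequential multiplications required for squaring and then for assembling the longest binary product yields the depth bound (i). Parallelizing the $O(m\log m)$ total multiplications (one for each bit of each power $\mathbf{A}^k$) and paying the padding cost introduced by the identity networks used to align layer counts in Lemma \ref{lem:size} produces the weight bound (ii); the slight slack compared to the naive $m\log m$ count is exactly the extra $\log m$ factor that appears in the stated size.
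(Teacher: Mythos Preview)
Your strategy---truncate the Neumann series at $m(\epsilon,\delta)$, realize each power via repeated squaring, parallelize, and sum---coincides with the paper's, and the tail bound, the choice of $m$, and the derivation of (iv) from (iii) are all correct. The gap is in your error-propagation step for the squaring stage. Your telescoping lemma ``a product of $r$ matrices, each replaced by an $\eta$-accurate approximation, has error $O(r\eta)$'' is fine, but it does not apply to the computation of $\mathbf{B}_j=\mathbf{A}^{2^j}$: there the two inputs to the $j$-th approximate multiplication are not $\eta$-close to the true factors but carry the accumulated error $\epsilon_{j-1}$ from all previous squarings. The honest recursion is $\epsilon_j\le\eta+\bigl(\|\tilde{\mathbf{B}}_{j-1}\|_2+\|\mathbf{A}^{2^{j-1}}\|_2\bigr)\,\epsilon_{j-1}$, and for $\|\mathbf{A}\|_2=1-\delta$ with small $\delta$ the bracket stays close to $2$ for roughly $\log_2(1/\delta)$ steps, so $\epsilon_j$ grows geometrically, not linearly, in $j$. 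Hence ``every $\mathbf{A}^k$ is approximated to accuracy $O(\eta\log m)$'' is unjustified as written.

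The paper sidesteps this by a rescaling trick: it builds the power networks only for inputs with $\|\mathbf{A}\|_2\le 1/2$ (Proposition~\ref{prop:Pow2}, Proposition~\ref{prop:PowGeneral}), where $\|\mathbf{A}^{2^{j-1}}\|_2\le 2^{-2^{j-1}}$ shrinks so fast that the recursion collapses to $\epsilon_j\le\epsilon$ uniformly in $j$; a general bound $Z$ on $\|\mathbf{A}\|_2$ is then recovered by pre-scaling the input by $1/(2Z)$ and post-scaling the output by $(2Z)^k$ (Corollary~\ref{cor:pow}). Your argument can be repaired the same way, or, more crudely, by accepting a per-power error that is polynomial in $m$ and shrinking $\eta$ accordingly---this only changes $\log_2(1/\eta)$ by an additive $O(\log m)$, which is already absorbed in the stated bounds (i)--(ii). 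So the slip is genuine but not fatal to the final statement.
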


\begin{remark}
In the proof of Theorem \ref{thm:Inverse}, we approximate the function mapping a matrix to its inverse via the Neumann series and then emulate this construction by NNs. There certainly exist alternative approaches to approximating this inversion function, such as, for example, via Chebyshev matrix polynomials (for an introduction of Chebyshev polynomials, see for instance \cite[Chapter 8.2]{Sullivan2015}). In fact, approximation by Chebyshev matrix polynomials is more efficient in terms of the degree of the polynomials required to reach a certain approximation accuracy. However, emulation of Chebyshev matrix polynomials by NNs either requires larger networks than that of monomials or, if they are represented in a monomial basis, coefficients that grow exponentially with the polynomial degree. In the end, the advantage of a smaller degree in the approximation through Chebyshev matrix polynomials does not seem to set off the drawbacks described before.
\end{remark}

\section{Neural Networks and Solutions of PDEs Using Reduced Bases}\label{sec:NNPDE}
In this section, we invoke the estimates for the approximate matrix inversion from Section \ref{subsec:matrInv} to approximate the parameter-dependent solution of parametric PDEs by NNs. In other words, for $\epsilontilde \geq \epsilonhat$, we construct NNs approximating the maps

\begin{align*}
    \Ycal\to \R^{D}\colon \quad y\mapsto \tilde{\mathbf{u}}_{y,\epsilontilde}^{\mathrm{h}}, \text{ and }   \Ycal\to \R^{d(\epsilontilde)}\colon \quad y\mapsto \mathbf{u}_{y,\epsilontilde}^{\mathrm{rb}}.
\end{align*}

Here, the sizes of the NNs essentially only depend on the approximation fidelity $\epsilontilde$ and the size $d(\epsilontilde)$ of an appropriate RB, but are independent or at most linear in the dimension of the high-fidelity discretization $D$.

We start in Section \ref{subsec:CoeffNNs} by constructing, under some general assumptions on the parametric problem, a NN emulating the maps $\tilde{\mathbf{u}}_{\cdot,\epsilontilde}^{\mathrm{h}}$ and $\mathbf{u}_{\cdot,\epsilontilde}^{\mathrm{rb}}$. In Section \ref{sec:Examples}, we verify these assumptions on two examples. 

\begin{comment}
Finally, in Section \ref{sec:ApproximationOfHilbertSpaceSolution}, we demonstrate under which conditions the NN approximation of  $\mathbf{u}_{\cdot,\epsilontilde}^{\mathrm{rb}}$ leads to a construction of a NN that efficiently approximates $y \mapsto u_{y}$. All proofs can be found in Appendix \ref{app:PDENNProofs}.
\end{comment}

\subsection{Determining the Coefficients of the Solution}\label{subsec:CoeffNNs}

Next, we present constructions of NNs the ReLU-realizations of which approximate the maps $\tilde{\mathbf{u}}_{\cdot,\epsilontilde}^{\mathrm{h}}$ and $\mathbf{u}_{\cdot,\epsilontilde}^{\mathrm{rb}}$, respectively. In our main result of this subsection, the approximation error of the NN approximation $\tilde{\mathbf{u}}_{\cdot,\epsilontilde}^{\mathrm{h}}$ will be measured with respect to the $|\cdot|_{\mathbf{G}}$-norm since we can relate this norm directly to the norm on $\Hil$ via Equation \eqref{eq:NormCoefficientHilbert}. In contrast, the approximation error of the NN approximating $\mathbf{u}^{\mathrm{rb}}_{\cdot,\epsilontilde}$ will be measured with respect to the $|\cdot|$-norm due to Equation \ref{eq:NormRB}.

As already indicated earlier, the main ingredient of the following arguments is an application of the NN of Theorem \ref{thm:Inverse} to the matrix $\mathbf{B}_{y,\epsilontilde}^{\mathrm{rb}}$. As a preparation, we show in Proposition \ref{prop:IndependentAlpha} in the appendix, that we can rescale the matrix $\mathbf{B}_{y,\epsilontilde}^{\mathrm{rb}}$ with a constant factor $\alpha \coloneqq (C_{\mathrm{coer}}+C_{\mathrm{cont}})^{-1}$ (in particular, independent of $y$ and $d(\epsilontilde)$) so that with $C_{\mathrm{coer}}\delta \coloneqq \alpha C_{\mathrm{coer}}$   
$$
\left\|\mathbf{Id}_{\R^{d(\epsilontilde)}}-\alpha \mathbf{B}_{y,\epsilontilde}^{\mathrm{rb}} \right\|_2 \leq 1-\delta <1.
$$

We will fix these values of $\alpha$ and $\delta$ for the remainder of the manuscript. Next, we state two abstract assumptions on the approximability of the map $ \mathbf{B}_{\cdot,\epsilontilde}^{\mathrm{rb}}$ which we will later on specify when we consider concrete examples in Subsection \ref{sec:Examples}.

\begin{assumption}\label{ass:ParameterStiffness}
We assume that, for any $\epsilontilde\geq \epsilonhat,\epsilon>0,$ and for a corresponding RB $(\psi_{i})_{i=1}^{d(\epsilontilde)},$ there exists a NN $\Phi^{\mathbf{B}}_{\epsilontilde,\epsilon}$ with $p$-dimensional input and $d(\epsilontilde)^2$-dimensional output
 such that 
\begin{align*}
    \sup_{y\in \Ycal} \left\|\alpha  \mathbf{B}^{\mathrm{rb}}_{y,\epsilontilde}- \mathbf{matr}\left( \Realization^{\Ycal}_{\varrho}\left(\Phi^{\mathbf{B}}_{\epsilontilde,\epsilon}\right)(y)\right)\right\|_2\leq \epsilon.
\end{align*}
We set $B_{M}\left(\epsilontilde, \epsilon\right) \coloneqq M\left(\Phi^{\mathbf{B}}_{\epsilontilde,\epsilon}\right) \in \N$ and $B_{L}\left(\epsilontilde, \epsilon\right) \coloneqq L\left(\Phi^{\mathbf{B}}_{\epsilontilde,\epsilon}\right)\in \N.$
\end{assumption}

In addition to Assumption \ref{ass:ParameterStiffness}, we state the following assumption on the approximability of the map $ \mathbf{f}_{\cdot,\epsilontilde}^{\mathrm{rb}}$.
\begin{assumption}\label{ass:ParameterRHS}
We assume that for every $\epsilontilde\geq \epsilonhat,\epsilon>0,$ and a corresponding RB $(\psi_{i})_{i=1}^{d(\epsilontilde)}$ there exists a NN $\Phi^{\mathbf{f}}_{\epsilontilde,\epsilon}$ with $p$-dimensional input and $d(\epsilontilde)$-dimensional output such that 
\begin{align*}
    \sup_{y\in \Ycal} \left| \mathbf{f}^{\mathrm{rb}}_{y,\epsilontilde}-  \Realization^{\Ycal}_{\varrho}\left(\Phi^{\mathbf{f}}_{\epsilontilde,\epsilon}\right)(y)\right|\leq \epsilon.
\end{align*}
We set $ F_L\left(\epsilontilde, \epsilon\right) \coloneqq L(\Phi^{\mathbf{f}}_{\epsilontilde,\epsilon})$ and $F_M\left(\epsilontilde, \epsilon\right) \coloneqq M\left(\Phi^{\mathbf{f}}_{\epsilontilde,\epsilon}\right)$.
\end{assumption}

Now we are in a position to construct NNs the ReLU-realizations of which approximate the coefficient maps $\tilde{\mathbf{u}}_{\cdot,\epsilontilde}^{\mathrm{h}},\mathbf{u}_{\cdot,\epsilontilde}^{\mathrm{rb}}.$ 

\begin{theorem}\label{thm:NNCoefficientApproximation}
 Let $\epsilontilde\geq \epsilonhat$ and $\epsilon \in \left(0,\alpha/4 \cdot \min\{1,C_{\mathrm{coer}}\} \right).$ Moreover, define $\epsilon'\coloneqq \epsilon/\max\{6,C_{\mathrm{rhs}}\}$, $\epsilon''\coloneqq \epsilon/3 \cdot C_{\mathrm{coer}}$, $\epsilon'''\coloneqq 3/8\cdot \epsilon'\alpha C_{\mathrm{coer}}^2$ and $\kappa \coloneqq 2 \max\left\{1,C_{\mathrm{rhs}},1/C_{\mathrm{coer}}\right\}$.
 Additionally, assume that Assumption \ref{ass:ParameterStiffness} and Assumption \ref{ass:ParameterRHS} hold. Then there exist NNs $\Phi^{\mathbf{u},\mathrm{rb}}_{\epsilontilde,\epsilon}$ and $\Phi^{\mathbf{u},\mathrm{h}}_{\epsilontilde,\epsilon}$ such that the following properties hold:
\begin{enumerate}[(i)]
\item $\sup_{y\in \Ycal}\left|\mathbf{u}^{\mathrm{rb}}_{y,\epsilontilde} - \Realization_\varrho^\Ycal\left( \Phi^{\mathbf{u},\mathrm{rb}}_{\epsilontilde,\epsilon}\right)(y) \right|\leq \epsilon$, \text{ and }
$
     \sup_{y\in \Ycal}\left|\tilde{\mathbf{u}}^{\mathrm{h}}_{y,\epsilontilde} - \Realization_\varrho^\Ycal\left( \Phi^{\mathbf{u},\mathrm{h}}_{\epsilontilde,\epsilon}\right)(y) \right|_{\mathbf{G}}\leq \epsilon,$
\item there exists a constant $C^{\mathbf{u}}_L= C^{\mathbf{u}}_L(C_{\mathrm{coer}}, C_{\mathrm{cont}}, C_{\mathrm{rhs}})>0$ such that
\begin{align*} 
L\left(\Phi^{\mathbf{u}, \mathrm{rb}}_{\epsilontilde,\epsilon}\right)&\leq L\left(\Phi^{\mathbf{u},\mathrm{h}}_{\epsilontilde,\epsilon}\right) \\ &\leq C^{\mathbf{u}}_L \max\left\{ \log_2(\log_2(1/\epsilon))\left(\log_2(1/\epsilon)+ \log_2(\log_2(1/\epsilon))+\log_2(d(\epsilontilde))\right) + B_L(\epsilontilde, \epsilon'''), F_L\left(\epsilontilde,\epsilon''\right)\right\},
\end{align*}
\item 
there exists a constant $C^{\mathbf{u}}_M= C^{\mathbf{u}}_M(C_{\mathrm{coer}}, C_{\mathrm{cont}}, C_{\mathrm{rhs}})>0$ such that
\begin{align*}
    M\left(\Phi^{\mathbf{u}, \mathrm{rb}}_{\epsilontilde,\epsilon}\right) & \leq C^{\mathbf{u}}_M d(\epsilontilde)^2 \cdot \bigg(d(\epsilontilde)\log_2(1/\epsilon)  \log_2^2(\log_2(1/\epsilon))\big(\log_2(1/\epsilon)+\log_2(\log_2(1/\epsilon)) + \log_2(d(\epsilontilde))\big)... \\  &\qquad \qquad \qquad \qquad \qquad \qquad  ...+ B_L(\epsilontilde,\epsilon''') + F_L\left(\epsilontilde,\epsilon''\right)\bigg)  +2B_M(\epsilontilde,\epsilon''') +F_M\left(\epsilontilde,\epsilon''\right),
\end{align*}

\item $
M\left(\Phi^{\mathbf{u},\mathrm{h}}_{\epsilontilde,\epsilon}\right) \leq 2 D d(\epsilontilde) + 2M\left(\Phi^{\mathbf{u}, \mathrm{rb}}_{\epsilontilde,\epsilon}\right),
$
\item $\sup_{y\in \Ycal}\left|\Realization_\varrho^\Ycal\left( \Phi^{\mathbf{u},\mathrm{rb}}_{\epsilontilde,\epsilon}\right)(y) \right|\leq \kappa^2 + \frac{\epsilon}{3}$, \text{ and }
$
     \sup_{y\in \Ycal}\left|\Realization_\varrho^\Ycal\left( \Phi^{\mathbf{u},\mathrm{h}}_{\epsilontilde,\epsilon}\right)(y) \right|_{\mathbf{G}}\leq \kappa^2 + \frac{\epsilon}{3}.$
\end{enumerate}
\end{theorem}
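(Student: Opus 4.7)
The plan is to assemble $\Phi^{\mathbf{u},\mathrm{rb}}_{\epsilontilde,\epsilon}$ from three ingredients---a NN $\Phi^{\mathbf{B}}_{\mathrm{inv};\epsilontilde,\epsilon'}$ approximating $y\mapsto (\mathbf{B}^{\mathrm{rb}}_{y,\epsilontilde})^{-1}$, the NN $\Phi^{\mathbf{f}}_{\epsilontilde,\epsilon''}$ from Assumption \ref{ass:ParameterRHS}, and the matrix-vector product NN $\Phi^{\kappa,d(\epsilontilde),d(\epsilontilde),1}_{\mathrm{mult};\epsilon/3}$ from Proposition \ref{prop:Multiplikation}---and to obtain $\Phi^{\mathbf{u},\mathrm{h}}_{\epsilontilde,\epsilon}$ by prepending the linear change-of-basis $\mathbf{V}_\epsilontilde$ as a single affine layer.

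The central step is constructing $\Phi^{\mathbf{B}}_{\mathrm{inv};\epsilontilde,\epsilon'}$. I would sparse-concatenate $\Phi^{\mathbf{B}}_{\epsilontilde,\epsilon'''}$ (from Assumption \ref{ass:ParameterStiffness}) with an affine-linear layer implementing $\mathbf{A}\mapsto \mathbf{Id}_{\R^{d(\epsilontilde)}} - \mathbf{A}$, producing a realization $\tilde{\mathbf{M}}(y)$ whose distance to $\mathbf{Id}_{\R^{d(\epsilontilde)}}-\alpha \mathbf{B}^{\mathrm{rb}}_{y,\epsilontilde}$ is at most $\epsilon'''$ in $\|\cdot\|_2$. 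Since $\|\mathbf{Id}_{\R^{d(\epsilontilde)}} - \alpha \mathbf{B}^{\mathrm{rb}}_{y,\epsilontilde}\|_2\leq 1-\delta$ by the preparatory rescaling and $\epsilon'''$ is smaller than $\delta/2$, we have $\|\tilde{\mathbf{M}}(y)\|_2\leq 1-\delta/2$, so the inversion NN $\Phi^{1-\delta/2, d(\epsilontilde)}_{\mathrm{inv};\hat\epsilon}$ from Theorem \ref{thm:Inverse} is applicable. Sparse-concatenating with it and folding the scalar $\alpha$ into the final weight matrix yields $\Phi^{\mathbf{B}}_{\mathrm{inv};\epsilontilde,\epsilon'}$. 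Via the perturbation identity $(A+E)^{-1}-A^{-1}=-(A+E)^{-1}E A^{-1}$ applied with $A=\alpha \mathbf{B}^{\mathrm{rb}}_{y,\epsilontilde}$ and the uniform bound $\|A^{-1}\|_2\leq 1/(\alpha C_{\mathrm{coer}})$ from \eqref{eq:NormStiffness}, the definition $\epsilon'''=\tfrac{3}{8}\epsilon'\alpha C_{\mathrm{coer}}^2$ together with an appropriately chosen $\hat\epsilon$ of order $\epsilon'$ suffices to guarantee $\|\tilde{\mathbf{A}}(y)-(\mathbf{B}^{\mathrm{rb}}_{y,\epsilontilde})^{-1}\|_2\leq \epsilon'$.

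With $\Phi^{\mathbf{B}}_{\mathrm{inv};\epsilontilde,\epsilon'}$ in hand, parallelization with $\Phi^{\mathbf{f}}_{\epsilontilde,\epsilon''}$ feeds into the multiplication NN; the choice $\kappa=2\max\{1,C_{\mathrm{rhs}},1/C_{\mathrm{coer}}\}$ absorbs the bounds $\|(\mathbf{B}^{\mathrm{rb}}_{y,\epsilontilde})^{-1}\|_2\leq 1/C_{\mathrm{coer}}$ and $|\mathbf{f}^{\mathrm{rb}}_{y,\epsilontilde}|\leq C_{\mathrm{rhs}}$ together with the sub-accuracy slack, keeping the parallel outputs inside the guaranteed-accuracy set $K^{\kappa}_{d(\epsilontilde),d(\epsilontilde),1}$ of Proposition \ref{prop:Multiplikation}. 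Part (i) then follows from the triangle inequality
\begin{align*}
\bigl|\mathbf{u}^{\mathrm{rb}}_{y,\epsilontilde} - \Realization_\varrho^\Ycal(\Phi^{\mathbf{u},\mathrm{rb}}_{\epsilontilde,\epsilon})(y)\bigr| \leq \|(\mathbf{B}^{\mathrm{rb}}_{y,\epsilontilde})^{-1}\|_2\,\epsilon'' + \epsilon'\,(C_{\mathrm{rhs}}+\epsilon'') + \tfrac{\epsilon}{3},
\end{align*}
whose three summands account for the approximations of $\mathbf{f}^{\mathrm{rb}}$, of the inverse, and of the multiplication; the calibration of $\epsilon', \epsilon''$ in the statement is precisely what forces each to be at most $\epsilon/3$. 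The $\mathrm{h}$-claim follows since prepending $\mathbf{V}_\epsilontilde$ gives $\Realization_\varrho^\Ycal(\Phi^{\mathbf{u},\mathrm{h}}_{\epsilontilde,\epsilon}) = \mathbf{V}_\epsilontilde\Realization_\varrho^\Ycal(\Phi^{\mathbf{u},\mathrm{rb}}_{\epsilontilde,\epsilon})$ and $|\mathbf{V}_\epsilontilde \mathbf{z}|_{\mathbf{G}} = |\mathbf{G}^{1/2}\mathbf{V}_\epsilontilde \mathbf{z}|\leq \|\mathbf{G}^{1/2}\mathbf{V}_\epsilontilde\|_2|\mathbf{z}| = |\mathbf{z}|$ by \eqref{eq:NormTrafo}; the bound (v) is inherited directly from Proposition \ref{prop:Multiplikation}(v).

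The depth/weight estimates (ii)--(iv) are read off Lemma \ref{lem:size}: the overall depth and weight count equal those of the multiplication NN (polylogarithmic in $1/\epsilon$ and $d(\epsilontilde)$, with $d(\epsilontilde)^2$ leading weight terms for a matrix-vector product) added to the maximum over the two parallel branches, where the inversion branch contributes the quantities from Theorem \ref{thm:Inverse} together with $B_L, B_M$, and the right-hand-side branch contributes $F_L, F_M$. The identity-padding needed to align unequal depths in the parallelization is precisely what produces the additive $B_L\cdot d(\epsilontilde)^2$ and $F_L\cdot d(\epsilontilde)^2$ terms in the weight count of (iii). Estimate (iv) comes from writing $\Phi^{\mathbf{u},\mathrm{h}}_{\epsilontilde,\epsilon}$ as a sparse concatenation with a single-layer NN carrying at most $Dd(\epsilontilde)$ weights. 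The principal obstacle will be the bookkeeping: propagating the triangle-inequality calibration down to the intermediate sub-accuracies $\epsilon', \epsilon'', \epsilon''', \hat\epsilon$, verifying that $m(\hat\epsilon,\delta/2)$ simplifies to something of order $\log_2(1/\epsilon)$ because $\delta$ is a fixed constant independent of $\epsilontilde$, and confirming that the identity-padding weight cost in the parallelization stays linear in $B_L, F_L$ and polynomial in $d(\epsilontilde)$.
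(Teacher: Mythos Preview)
Your proposal is correct and follows essentially the same route as the paper: the construction of $\Phi^{\mathbf{B}}_{\mathrm{inv};\epsilontilde,\epsilon'}$ via Assumption~\ref{ass:ParameterStiffness}, the affine shift to $\mathbf{Id}-\alpha\mathbf{B}^{\mathrm{rb}}$, Theorem~\ref{thm:Inverse}, and the perturbation identity matches the paper's preparatory Proposition~\ref{prop:InverseStiffnessNN}, and your three-term triangle inequality for part~(i) is exactly the decomposition $\mathrm{I}+\mathrm{II}+\mathrm{III}$ used there. The only cosmetic difference is that the paper folds the $\mathbf{A}\mapsto\mathbf{Id}-\mathbf{A}$ map into the last layer of $\Phi^{\mathbf{B}}_{\epsilontilde,\epsilon'''}$ (Remark~\ref{rem:ParameterStiffness}) rather than adding it via sparse concatenation, which saves a layer but does not change the argument.
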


\begin{remark}
In the proof of Theorem \ref{thm:NNCoefficientApproximation} we construct a NN $\Phi^{\mathbf{B}}_{\mathrm{inv};\epsilontilde,\epsilon}$ the ReLU realization of which $\epsilon$-approximates $\left(\mathbf{B}_{y}^{\mathrm{rb}}\right)^{-1}$ (see Proposition \ref{prop:InverseStiffnessNN}). Then the NNs of Theorem \ref{thm:NNCoefficientApproximation} can be explicitely constructed as \begin{align*}
     \Phi^{\mathbf{u},\mathrm{rb}}_{\epsilontilde,\epsilon} \coloneqq \Phi^{\kappa,d(\epsilontilde),d(\epsilontilde),1}_{\mathrm{mult};\frac{\epsilon}{3}} \odot \Paral\left(\Phi^{\mathbf{B}}_{\mathrm{inv};\epsilontilde,\epsilon'},\Phi^{\mathbf{f}}_{\epsilontilde,\epsilon''} \right)\conc \left(\left(  \begin{pmatrix} \mathbf{Id}_{\R^{p}} \\ \mathbf{Id}_{\R^{p}}  \end{pmatrix} , \mathbf{0}_{\R^{2p}}\right)\right)\quad \text{and} \quad 
     \Phi^{\mathbf{u},\mathrm{h}}_{\epsilontilde,\epsilon} \coloneqq \left(\left(\mathbf{V}_\epsilontilde, \mathbf{0}_{\R^D}\right)\right) \odot\Phi^{\mathbf{u},\mathrm{rb}}_{\epsilontilde,\epsilon},
 \end{align*}
\end{remark}

\begin{remark}
It can be checked in the proof of Theorem \ref{thm:NNCoefficientApproximation}, specifically \eqref{eq:CLDef} and \eqref{eq:CMDef} that the constants $C^{\mathbf{u}}_L, C^{\mathbf{u}}_M$ depend on the constants $C_{\mathrm{coer}},C_{\mathrm{cont}},C_{\mathrm{rhs}}$ in the following way (recall that $ \frac{C_{\mathrm{coer}}}{2C_{\mathrm{cont}}} \leq \delta = \frac{C_{\mathrm{coer}}}{C_{\mathrm{coer}}+C_\mathrm{cont}}\leq \frac{1}{2}$):

\begin{itemize}
    \item $C^{\mathbf{u}}_L$ depends affine linearly on 
    
    $$
    \log^2_2\left(\frac{\log_2(\delta/2)}{\log_2(1-\delta)}\right), \quad   \log_2\left(\frac{1}{C_{\mathrm{coer}}+C_{\mathrm{cont}}}\right),\quad  \log_2\left(\max\left\{1,C_{\mathrm{rhs}}, 1/C_{\mathrm{coer}}\right\}\right).$$
    \item $C^{\mathbf{u}}_M$ depends affine linearly on 
    $$
     \log_2\left(\frac{1}{C_{\mathrm{coer}}+C_{\mathrm{cont}}}\right), \ \frac{\log_2(\delta/2)}{\log_2(1-\delta)}\cdot \log_2^3\left(\frac{\log_2(\delta/2)}{\log_2(1-\delta)} \right), \  \log_2\left(\max\left\{1,C_{\mathrm{rhs}}, 1/C_{\mathrm{coer}}\right\}\right).
    $$
\end{itemize}
\end{remark}

\begin{remark}
Theorem \ref{thm:NNCoefficientApproximation} guarantees the existence of two moderately sized NNs the realizations of which approximate the discretized solution maps:
\begin{align}\label{eq:approximatedfunctions}
    \Ycal\to \R^{D}\colon \quad y\mapsto \tilde{\mathbf{u}}_{y,\epsilontilde}^{\mathrm{h}}, \text{ and } \Ycal\to \R^{d(\epsilontilde)}\colon \quad y\mapsto \mathbf{u}_{y,\epsilontilde}^{\mathrm{rb}}.
\end{align}
Also of interest is the approximation of the parametrized solution of the PDE, i.e., the map $\Ycal \times \Omega \to \R \colon \quad  (y,x)\mapsto u_y(x)$, where $\Omega$ is the domain on which the PDE is defined.
Note that, if either the elements of the reduced basis or the elements of the high-fidelity basis can be very efficiently approximated by realizations of NNs, then the representation 
$$
    u_y(x) \approx \sum_{i= 1}^{d(\epsilontilde)} (\mathbf{u}_{y,\epsilontilde}^{\mathrm{rb}})_i \psi_i(x) = \sum_{i=1}^D (\tilde{\mathbf{u}}_{y,\epsilontilde}^{\mathrm{h}})_i \varphi_i(x)
$$
suggests that $(y,x)\mapsto u_y(x)$ can be approximated with essentially the cost of approximating the respective function in \eqref{eq:approximatedfunctions}. 
Many basis elements that are commonly used for the high-fidelity representation can indeed be approximated very efficiently by realizations of NNs, such as, e.g., polynomials, finite elements, or wavelets \cite{YAROTSKY2017103, he2018relu, OPS19_811, shaham2018provable}. 
\end{remark}

\subsection{Examples of Neural Network Approximation of Parametric Maps}\label{sec:Examples}

In this subsection, we apply Theorem \ref{thm:NNCoefficientApproximation} to a variety of concrete examples in which the approximation of the coefficient maps $\mathbf{u}_{\cdot,\epsilontilde}^{\mathrm{rb}},$ $\tilde{\mathbf{u}}_{\cdot,\epsilontilde}^{\mathrm{h}}$ can be approximated by comparatively small NNs. We show that the sizes of these NNs depend only on the size of associated reduced bases by verifying Assumption \ref{ass:ParameterStiffness} and Assumption \ref{ass:ParameterRHS}, respectively. We will discuss to what extent our results depend on the respective ambient dimensions $D,~p$ in Section \ref{sec:CurseDiscussion}.

We will state the following examples already in their variational formulation and note that they fulfill the requirements of Assumption \ref{ass:opeq}. We also remark that the presented examples represent only a small portion of problems to which our theory is applicable. %In both examples, the parameter-dependence in both cases is analytic.

\subsubsection{Example I: Diffusion Equation} \label{subsec:ExDiff}
We consider a special case of \cite[Chapter 2.3.1]{QuarteroniIntro} which can be interpreted as a generalized version of the heavily used example $-\mathrm{div}(a\nabla u)=f,$ where $a$ is a scalar field (see for instance \cite{nadav-shashua1,SchwabZech} and the references therein). 
Let $n\in \N,~\Omega\subset \R^n,$ be a Lipschitz domain and $\Hil\coloneqq H_0^1(\Omega)=\left\{u\in H^1(\Omega)\colon~u|_{\partial \Omega}=0\right\}$. 
We assume that the parameter set is given by a compact set $\mathcal{T}\subset L^{\infty}(\Omega,\R^{n\times n})$ such that for all $\mathbf{T}\in \mathcal{T}$ and almost all $\mathbf{x}\in \Omega$ the matrix $\mathbf{T}(\mathbf{x})$ is symmetric, positive definite with matrix norm that can be bounded from above and below independently of $\mathbf{T}$ and $\mathbf{x}$. 
As we have noted in Assumption \ref{ass:opeq}, we can assume that there exist some $(\mathbf{T}_i)_{i=0}^{\infty}\subset L^{\infty}(\Omega,\R^{n\times n})$ such that for every $\mathbf{T}\in \mathcal{T}$ there exist $(y_i(\mathbf{T}))_{i=1}^\infty\subset [-1,1]$ with $\mathbf{T}= \mathbf{T}_0+\sum_{i=1}^\infty y_i(\mathbf{T}) \mathbf{T}_i.$ 
We restrict ourselves to the case of finitely supported sequences $(y_i)_{i=1}^\infty.$ To be more precise, let $p\in \N$ be potentially very high but fixed, let $\Ycal\coloneqq [-1,1]^{p}$ and consider for $y\in \Ycal$ and some fixed $f\in \Hil^*$  the parametric PDE
\begin{align*}
        b_y(u_y,v)\coloneqq \int_{\Omega} \mathbf{T}_0\nabla u_y \nabla v~d\mathbf{x}+  \sum_{i=1}^{p} y_i \int_{\Omega} \mathbf{T}_i\nabla u_y \nabla v~d\mathbf{x} = f(v), \qquad \text{for all } v\in \Hil.
    \end{align*} 
    
Then, the parameter-dependency of the bilinear forms is linear, hence analytic whereas the parameter-dependency of the right-hand side is constant, hence also analytic, implying that $\overline{W}(S(\Ycal))$ decays exponentially fast. This in turn implies existence of small RBs $(\psi_i)_{i=1}^{d(\epsilontilde)}$ where $d(\epsilontilde)$ depends at most polylogarithmically on $1/\epsilontilde$. In this case, Assumption \ref{ass:ParameterStiffness} and Assumption \ref{ass:ParameterRHS} are trivially fulfilled: for $\epsilontilde>0,\epsilon>0$ we can construct one-layer NNs $\Phi^{\mathbf{B}}_{\epsilontilde,\epsilon}$ with $p$-dimensional input and $d(\epsilontilde)^2$-dimensional output as well as  $\Phi^{\mathbf{f}}_{\epsilontilde,\epsilon}$ with $p$-dimensional input and $d(\epsilontilde)$-dimensional output the ReLU-realizations of which exactly implement the maps $y\mapsto \mathbf{B}^{\mathrm{rb}}_{y,\epsilontilde}$ and $y\mapsto \mathbf{f}^{\mathrm{rb}}_{y,\epsilontilde},$ respectively.  

In conclusion, in this example, we have, for $\epsilontilde, \epsilon>0$,
\begin{align*}
B_L\left(\epsilontilde, \epsilon\right) = 1, \quad   
F_L\left(\epsilontilde, \epsilon\right) = 1, \quad   
B_M\left(\epsilontilde, \epsilon\right) \leq pd(\epsilontilde)^2, \quad    F_M\left(\epsilontilde, \epsilon\right) \leq pd(\epsilontilde).
\end{align*}
Theorem \ref{thm:NNCoefficientApproximation} hence implies the existence of a NN approximating $\mathbf{u}^{\mathrm{rb}}_{\cdot,\epsilontilde}$ up to error $\epsilon$ with a size that is linear in $p,$ polylogarithmic in $1/\epsilon$, and, up to a log factor, cubic in $d(\epsilontilde)$. Moreover, we have shown the existence of a NN approximating $\tilde{\mathbf{u}}^{\mathrm{h}}_{\cdot,\epsilontilde}$ with a size that is linear in $p,$ polylogarithmic in $1/\epsilon,$ linear in $D$ and, up to a log factor, cubic in $d(\epsilontilde).$ 

\subsubsection{Example II: Linear Elasticity Equation}
Let $\Omega\subset \R^3$ be a Lipschitz domain,~ $\Gamma_D,\Gamma_{N_1},\Gamma_{N_2},\Gamma_{N_3}\subset \partial\Omega,$ be disjoint such that $\Gamma_D\cup \Gamma_{N_1}\cup\Gamma_{N_2}\cup\Gamma_{N_3} = \partial\Omega,$ $\Hil\coloneqq [H_{\Gamma_D}^1(\Omega)]^3,$ where $H_{\Gamma_D}^1(\Omega)= \left\{u\in H^1(\Omega)\colon u|_{\Gamma_D}=0 \right\}.$ In variational formulation, this problem can be formulated as an affinely decomposed problem dependent on five parameters, i.e., $p=5.$ Let $\Ycal\coloneqq [\tilde{y}^{1,1},\tilde{y}^{2,1}]\times...\times [\tilde{y}^{1,5},\tilde{y}^{2,5}]\subset \R^5$ such that $[\tilde{y}^{1,2},\tilde{y}^{2,2}]\subset (-1, 1/2 )$ and for $y=(y_1,...,y_5)\in \Ycal$ we consider the problem

\begin{align*}
    b_y(u_y,v)=f_y(v), \quad \text{ for all } v\in \Hil, 
\end{align*}
where 
\begin{itemize}
    \item 
        $b_y(u_y,v) 
        \coloneqq \frac{y_1}{1+y_2} \int_{\Omega} \mathrm{trace}\left(\left(\nabla u_y + (\nabla(u_y)^T \right)\cdot \left(\nabla v + (\nabla v)^T \right)^T\right) d\mathbf{x} + \frac{y_1y_2}{1-2y_2} \int_{\Omega}  \mathrm{div}(u_y)\, \mathrm{div}(v)~d\mathbf{x}, $
    \item 
        $f_y(v)
        \coloneqq y_3\int_{\Gamma_1} \mathbf{n}\cdot v\, d\mathbf{x}+y_4\int_{\Gamma_2} \mathbf{n}\cdot v \, d\mathbf{x} +y_5\int_{\Gamma_3} \mathbf{n}\cdot v \, d\mathbf{x},$
    and where $\mathbf{n}$ denotes the outward unit normal on $\partial \Omega.$ 
\end{itemize}

The parameter-dependency of the right-hand side is linear (hence analytic), whereas the parameter-dependency of the bilinear forms is rational, hence (due to the choice of $\tilde{y}^{1,2},\tilde{y}^{2,2}$) also analytic and $\overline{W}_N(S(\Ycal))$ decays exponentially fast implying that we can choose $d(\epsilontilde)$ to depend polylogarithmically on $\epsilontilde$. It is now easy to see that Assumption \ref{ass:ParameterStiffness} and Assumption \ref{ass:ParameterRHS} are fulfilled with NNs the size of which is comparatively small: By \cite{TelgarskyRational}, for every $\epsilontilde,\epsilon>0$ we can find a NN with $\mathcal{O}(\log^2_2(1/\epsilon))$ layers and $\mathcal{O}(d(\epsilontilde)^2\log^3_2(1/\epsilon))$ non-zero weights the ReLU-realization of which approximates the map $y\mapsto \mathbf{B}_{y,\epsilontilde}^{\mathrm{rb}}$ up to an error of $\epsilon$. Moreover, there exists a one-layer NN $\Phi^{\mathbf{f}}_{\epsilontilde,\epsilon}$ with $p$-dimensional input and $d(\epsilontilde)$-dimensional output the ReLU-realization of which exactly implements the map $y\mapsto \mathbf{f}^{\mathrm{rb}}_{y,\epsilontilde}$. 
In other words, in these examples, for $\epsilontilde, \epsilon>0$,
\begin{align*}
B_L(\epsilontilde, \epsilon)  \in \mathcal{O}\left(\log_2^2(1/\epsilon) \right), \quad 
 F_L\left(\epsilontilde, \epsilon\right) = 1, \quad   
 B_M\left(\epsilontilde, \epsilon\right) \in \mathcal{O}\left(d(\epsilontilde)^2\log_2^3(1/\epsilon) \right), \quad    F_M\left(\epsilontilde, \epsilon\right) \leq 5 d(\epsilontilde).  
\end{align*}
Thus, Theorem \ref{thm:NNCoefficientApproximation} implies the existence of NNs approximating $\mathbf{u}^{\mathrm{rb}}_{\cdot,\epsilontilde}$ up to error $\epsilon$ with a size that is polylogarithmic in  $1/\epsilon$, and, up to a log factor, cubic in $d(\epsilontilde)$. Moreover, there exist NNs approximating $\tilde{\mathbf{u}}^{\mathrm{h}}_{\cdot,\epsilontilde}$ up to error $\epsilon$ with a size that is linear in $D,$ polylogarithmic in  $1/\epsilon$, and, up to a log factor, cubic in $d(\epsilontilde)$. 

For a more thorough discussion of this example (a special case of the linear elasticity equation which describes the displacement of some elastic structure under physical stress on its boundaries), we refer to \cite[Chapter 2.1.2, Chapter 2.3.2, Chapter 8.6]{QuarteroniIntro}.

\section{Discussion: Dependence of Approximation Rates on  Involved Dimensions} \label{sec:CurseDiscussion}

In this section, we will discuss our results in terms of the dependence on the involved dimensions. We would like to stress that the resulting approximation rates (which can be derived from Theorem \ref{thm:NNCoefficientApproximation}) differ significantly from and are often substantially better than alternative approaches. As described in Section \ref{sec:RedBase}, there are three central dimensions that describe the hardness of the problem. These are the dimension $D$ of the high-fidelity discretization space $U^{\mathrm{h}}$, the dimension $d(\epsilontilde)$ of the reduced basis space, and the dimension $p$ of the parameter space $\mathcal{Y}.$ 

\paragraph*{Dependence on $D$:}
Examples I and II above establish approximation rates that depend at most linearly on $D$, in particular, the dependence on $D$ is not coupled to the dependence on $\epsilon$. Another approach to solve these problems would be to directly solve the linear systems from the high-fidelity discretization. Without further assumptions on sparsity properties of the matrices, the resulting complexity would be $\mathcal{O}(D^3)$ plus the cost of assembling the high-fidelity stiffness matrices. Since $D \gg d(\epsilontilde)$, this is significantly worse than the approximation rate provided by Theorem \ref{thm:NNCoefficientApproximation}.

\paragraph*{Dependence on $d(\epsilontilde):$} If one assembles and solves the Galerkin scheme for a previously found reduced basis, one typically needs $\mathcal{O}\left(d(\epsilontilde)^3\right)$ operations. By building NNs emulating this method, we achieve  essentially the same approximation rate of $\mathcal{O}\left(d(\epsilontilde)^3\log_2(d(\epsilontilde))\cdot C(\epsilon)\right)$ where $C(\epsilon)$ depends polylogarithmically on the approximation accuracy $\epsilon.$ 

Note that, while having comparable complexity, the NN-based approach is more versatile than using a Galerkin scheme and can be applied even when the underlying PPDE is fully unknown as long as sufficiently many snapshots are available. 

%On the other hand, neural networks are not specifically constructed for solving parametric PDEs but still sufficiently versatile to detect the structure of the solution manifold stemming from a parametric PDE. Additionally, once the neural network approximating the solution map has been found, the calculation of the solution for a new input in the online phase is done by a simple forward pass. 

\paragraph*{Dependence on $p$:} 

We start by comparing our result to naive NN approximation results which are simply based on the smoothness properties of the map $y \mapsto \tilde{\mathbf{u}}_{\cdot,\epsilontilde}^{\mathrm{h}}$ without using its specific structure. For example, if these maps are analytic, then classical approximation rates with NNs (such as those provided by \cite[Theorem 1]{YAROTSKY2017103}, \cite[Theorem 3.1]{PetV2018OptApproxReLU} or \cite[Corollary 4.2]{ReLUSobolev}) promise approximations up to an error of $\epsilon$ with NNs $\Phi$ of size $M(\Phi) \leq c(p,n) D \epsilon^{-p/n}$ for arbitrary $n \in \N$ and a constant $c(p,n)$. In this case, the dependence on $D$ is again linear, but coupled with the potentially quickly growing term $\epsilon^{-p/n}$. Similarly, when approximating the map $y \mapsto \tilde{\mathbf{u}}_{\cdot,\epsilontilde}^{\mathrm{rb}},$ one would obtain an approximation rate of $\epsilon^{-p/n}.$ In addition, our approach is more flexible than the naive approach in the sense that Assumptions \ref{ass:ParameterStiffness} and \ref{ass:ParameterRHS} could even be satisfied if the map $y \mapsto \mathbf{B}^{\mathrm{rb}}_{y,\epsilontilde}$ is non-smooth.

Now we analyze the dependence of our result to $p$ in more detail. We recall from Theorem \ref{thm:NNCoefficientApproximation}, that in our approach the sizes of approximating networks to achieve an error of $\epsilon$ depend only polylogarithmically on $1/\epsilon$, (up to a log factor) cubically on $d(\epsilontilde)$, are independent from or at worst linear in $D$, and depend linearly on $B_{M}(\epsilontilde, \epsilon),$ $B_{M}(\epsilontilde, \epsilon),$ $F_M\left(\epsilontilde, \epsilon\right)$, $F_{L}(\epsilontilde, \epsilon)$, respectively. 
First of all, the dependence on $p$ materializes through the quantities  $B_{M}(\epsilontilde, \epsilon),$ $B_{L}(\epsilontilde, \epsilon),$ $F_M\left(\epsilontilde, \epsilon\right)$, $F_{L}(\epsilontilde, \epsilon)$ from Assumptions \ref{ass:ParameterStiffness} and \ref{ass:ParameterRHS}. We have seen that in both examples above, the associated weight quantities $B_{M}(\epsilontilde, \epsilon),$ $F_{M}(\epsilontilde, \epsilon)$ scale like $pd(\epsilontilde)^2\cdot \mathrm{polylog}(1/\epsilon),$ whereas the depth quantities $B_{L}(\epsilontilde, \epsilon),$ $F_{L}(\epsilontilde, \epsilon)$ scale polylogarithmically in $1/\epsilon.$ 
Combining this observation with the statement of Theorem \ref{thm:NNCoefficientApproximation}, we can conclude that the governing quantity in the obtained approximation rates is given by the dimension of the solution manifold $d(\epsilontilde)$, derived by bounds on the Kolmogorov $N$-width (and, consequently, the inner $N$-width).

For problems  of the type \eqref{eq:affineProblem}, where the involved maps $\theta_q$ are sufficiently smooth and the right-hand side is parameter-independent, one can show (see for instance \cite[Equation 3.17]{BachCohenKolmogorov} or \cite{ohlberger2015reduced} that $W_N(S(\Ycal))$ (and hence also $\overline{W}_N(S(\Ycal))$) scales like $e^{-cN^{Q_b}}$ for some $c>0$. This implies for the commonly studied case $Q_b=p$ (such as in Example I of Section \ref{subsec:ExDiff}) that the dimension $d(\epsilontilde)$ of the reduced basis space scales like $ \mathcal{O}(\log_2(1/\epsilontilde)^p).$ This bound (which is based on a Taylor expansion of the solution map) has been improved only in very special cases of Example I (see for instance \cite{BachCohenKolmogorov,CohenDahmenBachmayr}) for small parameter dimensions $p$. Hence, by Theorem \ref{thm:NNCoefficientApproximation}, the number of non-zero weights necessary to approximate the parameter-to-solution map $\tilde{\mathbf{u}}^{\mathrm{h}}_{y,\epsilontilde}$, can be upper bounded by $\mathcal{O}\left(p \log_2^{3 p}(1/\epsilontilde)\log_2(\log_2(1/\epsilontilde))\log_2^2(1/\epsilon)\log_2^2(\log_2(1/\epsilon)) + D\log_2(1/\epsilontilde)^p\right)$ and the number of layers by $\mathcal{O}\left(p\log^2_2(1/\epsilon)\log_2(1/\epsilontilde)\right).$
This implies that in our results there is a (mild form of a) curse of dimensionality which can only be circumvented if the sensitivity of the Kolmogorov $N$-width with regards to the parameter dimension $p$ can be reduced further.

\section*{Acknowledgments}
 M.R. would like to thank Ingo Gühring 
for fruitful discussions on the topic.

G. K. acknowledges partial support by the Bundesministerium
 f\"ur Bildung und Forschung (BMBF) through the Berlin Institute for the
Foundations of Learning and Data (BIFOLD), Project AP4, RTG DAEDALUS (RTG 2433),
Projects P1, P3, and P8, RTG BIOQIC (RTG 2260), Projects P4 and P9, and by the
Berlin Mathematics Research Center MATH+, Projects EF1-1 and EF1-4.
P.P. was supported by a DFG Research Fellowship ``Shearlet-based energy functionals for anisotropic phase-field methods". R.S. acknowledges partial support by the DFG through grant RTG DAEDALUS (RTG 2433), Project P14.

\small

%\bibliographystyle{abbrv}
%\bibliography{references.bib}

\normalsize
\appendix 

\section{Proofs of the Results from Section \ref{sec:NNCalc}} \label{app:NNCalculusProofs}

\subsection{Proof of Proposition \ref{prop:Multiplikation}}\label{app:ProofMultiplikation}

In this subsection, we will prove Proposition \ref{prop:Multiplikation}. As a preparation, we first prove the following special instance under which $M(\Phi^1 \conc \Phi^2)$ can be estimated by $\max\left\{ M(\Phi^1), M(\Phi^2)\right\}$.

\begin{lemma}\label{lem:TrivialConcEstimate}
Let $\Phi$ be a NN with $m$-dimensional output and $d$-dimensional input. If $\mathbf{a} \in \R^{1 \times m}$, then, for all $\ell = 1, \dots, L(\Phi)$,
$$
M_\ell(((\mathbf{a}, 0)) \conc \Phi) \leq M_\ell(\Phi).
$$
In particular, it holds that $M((\mathbf{a}, 0) \conc \Phi) \leq M(\Phi).$
Moreover, if $\mathbf{D} \in \R^{d \times n}$ such that, for every $k \leq d$ there is at most one $l_k\leq n$ such that $\mathbf{D}_{k,l_k} \neq 0$, then, for all $\ell = 1, \dots, L(\Phi)$,
$$
M_\ell(\Phi \conc ((\mathbf{D}, \mathbf{0}_{\R^d})) ) \leq M_\ell(\Phi).
$$
In particular, it holds that $M(\Phi \conc ((\mathbf{D}, \mathbf{0}_{\R^d})) ) \leq M(\Phi)$.
\end{lemma}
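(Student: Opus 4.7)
The plan is to unwind the definitions of concatenation and of the weight count $M_\ell$, and then to exploit the sparsity structure of $\mathbf{a}$ (just a row vector) and of $\mathbf{D}$ (at most one nonzero per row) to show that the affected layer cannot gain nonzero entries. Since the estimate is layer-wise, the bound on $M$ will follow by summation over $\ell$.

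For the first claim, I would write out $((\mathbf{a},0))\conc\Phi$ explicitly from the definition of $\conc$. With $L_1 = 1$, only the last layer of $\Phi$ is altered: it is replaced by $(\mathbf{a}\mathbf{A}_L,\mathbf{a}\mathbf{b}_L)$, while layers $1,\dots,L-1$ are copied verbatim. Hence $M_\ell(((\mathbf{a},0))\conc\Phi)=M_\ell(\Phi)$ for $\ell<L$. For $\ell=L$, I would observe that $\mathbf{a}\mathbf{A}_L$ is a row vector whose $j$-th entry is $\sum_i \mathbf{a}_i (\mathbf{A}_L)_{i,j}$, so a nonzero entry in column $j$ requires at least one nonzero entry in column $j$ of $\mathbf{A}_L$; therefore $\|\mathbf{a}\mathbf{A}_L\|_0$ is at most the number of nonzero columns of $\mathbf{A}_L$, which is $\leq \|\mathbf{A}_L\|_0$. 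Similarly $\mathbf{a}\mathbf{b}_L$ is a scalar, and it can be nonzero only if some entry of $\mathbf{b}_L$ is nonzero, so $\|\mathbf{a}\mathbf{b}_L\|_0\leq\|\mathbf{b}_L\|_0$. Adding gives $M_L(((\mathbf{a},0))\conc\Phi)\leq M_L(\Phi)$, and the global bound on $M$ follows.

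For the second claim, I would again unwind the concatenation: with $L_2=1$ and $\mathbf{b}_1^2=\mathbf{0}$, the product $\Phi\conc((\mathbf{D},\mathbf{0}_{\R^d}))$ modifies only the first layer, which becomes $(\mathbf{A}_1\mathbf{D},\mathbf{b}_1)$; all deeper layers are unchanged. So it suffices to show $\|\mathbf{A}_1\mathbf{D}\|_0\leq\|\mathbf{A}_1\|_0$. The key observation is that because each row $k$ of $\mathbf{D}$ has at most one nonzero entry at position $l_k$, column $j$ of $\mathbf{A}_1\mathbf{D}$ is a linear combination of only those columns of $\mathbf{A}_1$ indexed by $k$ with $l_k=j$; indeed $(\mathbf{A}_1\mathbf{D})_{\cdot,j}=\sum_{k:l_k=j}\mathbf{D}_{k,j}(\mathbf{A}_1)_{\cdot,k}$. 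Hence
\[
\|(\mathbf{A}_1\mathbf{D})_{\cdot,j}\|_0\leq\sum_{k:l_k=j}\|(\mathbf{A}_1)_{\cdot,k}\|_0,
\]
and summing over $j$ assigns each column index $k$ to at most one $j=l_k$, so $\|\mathbf{A}_1\mathbf{D}\|_0\leq\sum_k\|(\mathbf{A}_1)_{\cdot,k}\|_0=\|\mathbf{A}_1\|_0$.

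No step is really hard; the main point of care is the column-wise accounting in the second claim, where the at-most-one-nonzero-per-row structure of $\mathbf{D}$ guarantees that distinct output columns of $\mathbf{A}_1\mathbf{D}$ draw from disjoint sets of columns of $\mathbf{A}_1$, so no column of $\mathbf{A}_1$ is counted twice. The global inequalities $M(\cdot)\leq M(\Phi)$ then follow by summation over all layers.
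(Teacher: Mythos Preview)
Your proof is correct and follows essentially the same approach as the paper: both reduce to the inequalities $\|\mathbf{a}\mathbf{A}_L\|_0+\|\mathbf{a}\mathbf{b}_L\|_0\leq\|\mathbf{A}_L\|_0+\|\mathbf{b}_L\|_0$ and $\|\mathbf{A}_1\mathbf{D}\|_0\leq\|\mathbf{A}_1\|_0$, with the same column-counting argument (the paper phrases the second one via the transpose $\mathbf{D}^T\mathbf{A}_1^T$ and works row-wise, which is exactly your column-wise computation).
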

\begin{proof}
Let $\Phi = \big( (\mathbf{A}_1,\mathbf{b}_1), \dots, (\mathbf{A}_{L},\mathbf{b}_{L}) \big)$, and $\mathbf{a}, \mathbf{D}$ as in the statement of the lemma. Then the result follows if 
\begin{align}\label{eq:EstimateForSmallA}
\|\mathbf{a} \mathbf{A}_L\|_0 + \|\mathbf{a}\mathbf{b}_L\|_{0} \leq \|\mathbf{A}_L\|_0 + \|\mathbf{b}_L\|_{0} 
\end{align}
and 
\begin{align*}
\|\mathbf{A}_1 \mathbf{D}\|_0 \leq \|\mathbf{A}_1\|_0. 
\end{align*}
It is clear that $\|\mathbf{a} \mathbf{A}_L\|_0$ is less than the number of nonzero columns of $\mathbf{A}_L$ which is certainly bounded by $\|\mathbf{A}_L\|_0$. The same argument shows that $\|\mathbf{a}\mathbf{b}_L\|_{0} \leq \|\mathbf{b}_L\|_{0}$. This yields  \eqref{eq:EstimateForSmallA}. 

We have that for two vectors $\mathbf{p},\mathbf{q} \in \R^{k}$, $k \in \N$ and for all $\mu, \nu\in \R$
$$
\|\mu \mathbf{p} + \nu \mathbf{q} \|_{0} \leq I(\mu) \|\mathbf{p}\|_{0} + I(\nu)\|\mathbf{q} \|_{0},
$$
where $I(\gamma) = 0$ if $\gamma = 0$ and $I(\gamma) = 1$ otherwise. Also,
$$
\|\mathbf{A}_1 \mathbf{D}\|_0 = \left\|\mathbf{D}^T \mathbf{A}_1^T \right\|_0 = \sum_{l = 1}^n \left\|\left(\mathbf{D}^T \mathbf{A}_1^T\right)_{l, -}\right\|_0, 
$$
where, for a matrix $\mathbf{G}$, $\mathbf{G}_{l, -}$ denotes the $l$-th row of $\mathbf{G}$. Moreover, we have that for all $l \leq n$
$$
\left(\mathbf{D}^T \mathbf{A}_1^T\right)_{l, -} =  \sum_{k = 1}^d \left(\mathbf{D}^T\right)_{l, k} \left(\mathbf{A}_1^T\right)_{k, -} = \sum_{k = 1}^d \mathbf{D}_{k, l} \left(\mathbf{A}_1^T\right)_{k, -}.
$$
As a consequence, we obtain
\begin{align*}
\|\mathbf{A}_1 \mathbf{D}\|_0 &\leq  \sum_{l = 1}^n \left\| \sum_{k = 1}^d \mathbf{D}_{k, l} \left(\mathbf{A}_1^T\right)_{k, -}\right\|_0 \leq  \sum_{l = 1}^n \sum_{k = 1}^d I\left( \mathbf{D}_{k, l} \right) \left\| \left(\mathbf{A}_1^T\right)_{k, -}\right\|_0\\
&= \sum_{k = 1}^d I\left( \mathbf{D}_{k, l_k} \right) \left\| \left(\mathbf{A}_1^T\right)_{k, -}\right\|_0 \leq \|\mathbf{A}_1\|_0.
\end{align*}
\end{proof}

Now we are ready to prove Proposition \ref{prop:Multiplikation}.

\begin{proof}[Proof of Proposition \ref{prop:Multiplikation}]

Without loss of generality, assume that $Z\geq 1$.
By \cite[Lemma 6.2]{SchwabOption}, there exists a NN 
$\times^Z_{\epsilon}$ with input dimension $2$, output dimension $1$ such that for $\Phi_{\epsilon} \coloneqq \times^Z_{\epsilon}$
\begin{align}
   L\left(\Phi_{\epsilon}\right)&\leq 0.5 \log_2\left(\frac{n\sqrt{dl}}{\epsilon}\right)+\log_2(Z)+6, \label{eq:LayersMult}\\
    M\left(\Phi_{\epsilon}\right)&\leq 
    90 \cdot \left(\log_2\left(\frac{n\sqrt{dl}}{\epsilon}\right)+2\log_2(Z)+6\right),\label{eq:WeightMult}\\ 
    M_1\left(\Phi_{\epsilon}\right) &\leq 16, \text{ as well as } M_{L\left(\Phi_{\epsilon}\right)}\left(\Phi_{\epsilon}\right)\leq 3, \label{eq:WeightMultFirstAndLast}\\
\sup_{|a|,|b| \leq Z}\left| ab - \Realization^{\R^2}_{\varrho}\left(\Phi_{\epsilon}\right) (a,b) \right| &\leq \frac{\epsilon}{n\sqrt{dl}}. \label{eq:ErrorEstMult}
\end{align}

Since $\|\mathbf{A}\|_2,\|\mathbf{B}\|_2\leq Z$ we know that for every $i=1,...,d,~ k=1,...,n,~j=1,...,l$ we have that $|\mathbf{A}_{i,k}|,|\mathbf{B}_{k,j}|\leq Z$. We define, for $i\in \{1, \dots, d\}, k \in \{1, \dots, n\}, j \in \{1, \dots, l\}$, the matrix $\mathbf{D}_{i,k,j}$ such that, for all $\mathbf{A} \in \R^{d\times n}, \mathbf{B} \in \R^{n\times l}$
$$
    \mathbf{D}_{i,k,j}(\vect(\mathbf{A}),\vect(\mathbf{B})) = (\mathbf{A}_{i,k}, \mathbf{B}_{k,j}).
$$
Moreover, let
$$
    \Phi^Z_{i,k,j;\epsilon} \coloneqq \times^Z_{\epsilon} \conc  \left(\left(\mathbf{D}_{i,k,j},\mathbf{0}_{\R^2}\right)\right).
$$
We have, for all $i\in \{1, \dots, d\}, k \in \{1, \dots, n\}, j \in \{1, \dots, l\}$, that $L\left(\Phi^Z_{i,k,j;\epsilon}\right) = L\left(\times^Z_{\epsilon}\right)$ and by Lemma \ref{lem:TrivialConcEstimate} that $\Phi^Z_{i,k,j;\epsilon}$ satisfies \eqref{eq:LayersMult}, \eqref{eq:WeightMult}, \eqref{eq:WeightMultFirstAndLast} with $\Phi_{\epsilon} \coloneqq \Phi^Z_{i,k,j;\epsilon}$. Moreover, we have by \eqref{eq:ErrorEstMult}

\begin{align} \label{eq:ErrorMult}
    \sup_{(\vect(\mathbf{A}),\vect(\mathbf{B}))\in K^{Z}_{d,n,l}} \left|\mathbf{A}_{i,k}\mathbf{B}_{k,j}-\Realization^{K^Z_{d,n,l}}_\varrho \left(\Phi^Z_{i,j,k;\epsilon} \right)(\vect(\mathbf{A}),\vect(\mathbf{B})) \right| \leq \frac{\epsilon}{n\sqrt{dl}}.
\end{align}
%\todo[inline]{Hier weiter}
As a next step, we set, for $\mathbf{1}_{\R^n} \in \R^n$ being a vector with each entry equal to 1,
\[
\Phi^Z_{i,j;\epsilon}\coloneqq \left(\left( \mathbf{1}_{\R^n},0\right)\right)\conc \Paral\left(\Phi^Z_{i,1,j;\epsilon},...,\Phi^Z_{i,n,j;\epsilon}\right)\conc \left(\left(  \begin{pmatrix} \mathbf{Id}_{\R^{n(d+l)}} \\ \vdots \\ \mathbf{Id}_{\R^{n(d+l)}}  \end{pmatrix}, \mathbf{0}_{\R^{n^2(d+l)}}  \right)\right),
\] 
which by Lemma \ref{lem:size} is a NN with $n\cdot (d+l)$-dimensional input and $1$-dimensional output such that \eqref{eq:LayersMult} holds with $\Phi_{\epsilon} \coloneqq \Phi^Z_{i,j;\epsilon}$. Moreover, by Lemmas \ref{lem:TrivialConcEstimate} and \ref{lem:size} and by \eqref{eq:WeightMult} we have that 
\begin{align}\label{eq:WeightMult2}
    M\left(\Phi^Z_{i,j;\epsilon}\right) \leq M\left(\Paral\left(\Phi^Z_{i,1,j;\epsilon},...,\Phi^Z_{i,n,j;\epsilon}\right)\right) \leq 90 n \cdot \left(\log_2\left(\frac{n\sqrt{dl}}{\epsilon}\right)+2\log_2(Z)+6\right). 
\end{align}
Additionally, by Lemmas \ref{lem:size} and \ref{lem:TrivialConcEstimate} and \eqref{eq:WeightMultFirstAndLast}, we obtain
\begin{align*}
M_1\left(\Phi^Z_{i,j;\epsilon}\right) &\leq M_1\left(\Paral\left(\Phi^Z_{i,1,j;\epsilon},...,\Phi^Z_{i,n,j;\epsilon}\right)\right) \leq 16 n.
\end{align*}
and
\begin{align}
M_{L\left(\Phi^Z_{i,j;\epsilon}\right)}\left(\Phi^Z_{i,j;\epsilon}\right) &= M_{L\left(\Phi^Z_{i,j;\epsilon}\right)}\left(\Paral\left(\Phi^Z_{i,1,j;\epsilon},...,\Phi^Z_{i,n,j;\epsilon}\right)\right) \leq 2 n. \label{eq:WeightMultFirstAndLast2}
\end{align}
By construction it follows that
$$
\Realization^{K^Z_{d,n,l}}_{\varrho}\left(\Phi^Z_{i,j;\epsilon}\right)(\vect(\mathbf{A}),\vect(\mathbf{B})) =  \sum_{k=1}^n \Realization^{K^Z_{d,n,l}}_{\varrho}\left(\Phi^Z_{i,k,j;\epsilon}\right)(\vect(\mathbf{A}),\vect(\mathbf{B}))
$$
and hence we have, by \eqref{eq:ErrorMult},
\begin{align*}
    &\sup_{(\vect(\mathbf{A}),\vect(\mathbf{B}))\in K^Z_{d,n,l}}\left| \sum_{k=1}^n \mathbf{A}_{i,k} \mathbf{B}_{k,j}- \Realization^ {K^Z_{d,n,l}}_{\varrho}\left(\Phi^Z_{i,j;\epsilon}\right)(\vect(\mathbf{A}),\vect(\mathbf{B})) \right| \leq \frac{\epsilon}{\sqrt{dl}}.
\end{align*}
As a final step, we define $\mult\coloneqq \Paral\left(\Phi^Z_{1,1;\epsilon},...,\Phi^Z_{d,1;\epsilon},...,\Phi^Z_{1,l;\epsilon},...,\Phi^Z_{d,l;\epsilon}\right)\conc \left(\left(  \begin{pmatrix} \mathbf{Id}_{\R^{n(d+l)}} \\ \vdots \\ \mathbf{Id}_{\R^{n(d+l)}}  \end{pmatrix}, \mathbf{0}_{\R^{dln(d+l)}}  \right)\right)$.
Then, by Lemma \ref{lem:size}, we have that \eqref{eq:LayersMult} is satisfied for $\Phi_{\epsilon} \coloneqq \mult$. This yields (i) of the asserted statement. Moreover, invoking Lemma \ref{lem:size}, Lemma \ref{lem:TrivialConcEstimate} and \eqref{eq:WeightMult2} yields that 
\begin{align*}
    M\left(\mult\right) &\leq 90 dln \cdot \left(\log_2\left(\frac{n\sqrt{dl}}{\epsilon}\right)+2\log_2(Z)+6\right),
    \end{align*}
which yields (ii) of the result. Moreover, by Lemma \ref{lem:size} and \eqref{eq:WeightMultFirstAndLast2} it follows that 
\begin{align*}
    M_1\left(\mult\right) \leq 16 d l n \text{ and } M_{L\left(\mult\right)}\left(\mult\right) \leq 2 d l n, 
\end{align*}
completing the proof of (iii). By construction and using the fact that for any $\mathbf{N}\in \R^{d\times l}$ there holds
\[
    \|\mathbf{N}\|_2\leq \sqrt{dl} \max_{i,j}|\mathbf{N}_{i,j}|,
\]
we obtain that 
\begin{align} 
    &\sup_{(\vect(\mathbf{A}),\vect(\mathbf{B}))\in K^Z_{d,n,l}}\left\|\mathbf{A} \mathbf{B}- \mathbf{matr}\left(\Realization^{K^Z_{d,n,l}}_{\varrho}\left(\mult\right)(\vect(\mathbf{A}),\vect(\mathbf{B}))\right)\right\|_2 \nonumber\\
    &\leq \sqrt{dl} \sup_{(\vect(\mathbf{A}),\vect(\mathbf{B}))\in K^Z_{d,n,l}}\max_{i=1,...,d,~ j=1,...,l} \left| \sum_{k=1}^n \mathbf{A}_{i,k} \mathbf{B}_{k,j}- \Realization^{K^Z_{d,n,l}}_{\varrho}\left(\Phi^Z_{i,j;\epsilon}\right)(\vect(\mathbf{A}),\vect(\mathbf{B}))  \right| \leq \epsilon. \label{eq:ErrorEstMult3}
\end{align}
Equation \eqref{eq:ErrorEstMult3} establishes (iv) of the asserted result. Finally, we have for any $(\vect(\mathbf{A}),\vect(\mathbf{B}))\in K^Z_{d,n,l}$ that
\begin{align*}
&\left\| \mathbf{matr}\left(\mathrm{R}_{\varrho}^{K^Z_{d,n,l}}\left(\mult\right) (\vect(\mathbf{A}),\vect(\mathbf{B}))\right)\right\|_2 \\
&\leq  \left\| \mathbf{matr}\left(\mathrm{R}_{\varrho}^{K^Z_{d,n,l}}\left(\mult\right) (\vect(\mathbf{A}),\vect(\mathbf{B}))\right)-\mathbf{A} \mathbf{B}\right\|_2  +\|\mathbf{A} \mathbf{B}\|_2 \\ &\leq \epsilon+\|\mathbf{A}\|_2 \|\mathbf{B}\|_2\leq \epsilon+Z^2\leq 1+Z^2.
\end{align*}
This demonstrates that (v) holds and thereby finishes the proof.
\end{proof}

\subsection{Proof of Theorem \ref{thm:Inverse}} \label{app:InverseProof}

The objective of this subsection is to prove of Theorem \ref{thm:Inverse}. Towards this goal, we construct NNs which emulate the map $\mathbf{A}\mapsto \mathbf{A}^k$ for $k\in \N$ and square matrices $\mathbf{A}.$ This is done by heavily using Proposition \ref{prop:Multiplikation}. First of all, as a direct consequence of Proposition \ref{prop:Multiplikation} we can estimate the sizes of the emulation of the multiplication of two squared matrices. Indeed, there exists a universal constant $C_1>0$ such that for all $d \in \N$, $Z>0$, $\epsilon\in (0,1)$ 
\begin{enumerate}[(i)]
    \item $L\left(\Phi_{\mathrm{mult}; \epsilon}^{Z, d, d, d}\right)\leq C_1\cdot \left(\log_2\left(1/\epsilon\right) +\log_2\left(d\right)+\log_2\left(\max\left\{1,Z\right\}\right)\right)$,
    \item $M\left(\Phi_{\mathrm{mult}; \epsilon}^{Z, d, d, d}\right) \leq C_1\cdot \left(\log_2\left(1/\epsilon\right) +\log_2\left(d\right)+\log_2\left(\max\left\{1,Z\right\}\right)\right) d^3$,
    \item  $M_1\left(\Phi_{\mathrm{mult}; \epsilon}^{Z, d, d, d}\right) \leq C_1 d^3, \qquad \text{as well as} \qquad M_{L\left(\Phi_{\mathrm{mult}; \epsilon}^{Z, d, d, d}\right)}\left(\Phi_{\mathrm{mult}; \epsilon}^{Z, d, d, d}\right)  \leq C_1 d^3$,
    \item $\sup_{(\vect(\mathbf{A}),\vect(\mathbf{B}))\in K^Z_{d,d,d}}\left\|\mathbf{A} \mathbf{B}- \mathbf{matr}\left(\Realization_{\varrho}^{K^Z_{d,d,d}}\left(\Phi_{\mathrm{mult}; \epsilon}^{Z, d, d, d}\right)(\vect(\mathbf{A}),\vect(\mathbf{B}))\right)\right\|_2 \leq \epsilon$,
    \item for every $(\vect(\mathbf{A}),\vect(\mathbf{B}))\in  K^Z_{d,d,d}$ we have
    \[
    \left\| \mathbf{matr}\left(\mathrm{R}_{\varrho}^{K^Z_{d,d,d}}\left(\Phi_{\mathrm{mult}; \epsilon}^{Z, d, d, d}\right) (\vect(\mathbf{A}),\vect(\mathbf{B}))\right)\right\|_2\leq \epsilon+\|\mathbf{A}\|_2 \|\mathbf{B}\|_2\leq \epsilon+Z^2 \leq 1+Z^2.
    \]
\end{enumerate}

One consequence of the ability to emulate the multiplication of matrices is that we can also emulate the squaring of matrices. We make this precise in the following definition. 

\begin{definition}\label{def:SquareAndConstant}
 For $d \in \N$, $Z>0$, and $\epsilon\in (0,1)$ we define the NN 
 \[
 \Phi^{Z, d}_{2;\epsilon} \coloneqq  \Phi_{\mathrm{mult}; \epsilon}^{Z, d, d, d} \conc \left(\left(  \begin{pmatrix} \mathbf{Id}_{\R^{d^2}} \\ \mathbf{Id}_{\R^{d^2}}  \end{pmatrix} , \mathbf{0}_{\R^{2d^2}}\right)\right),
 \]  
 which has $d^2 $-dimensional input and $d^2 $-dimensional output. By Lemma \ref{lem:size} we have that there exists a constant $C_{\mathrm{sq}}>C_1$ such that for all $d \in \N$, $Z>0$, $\epsilon\in (0,1)$ 
\begin{enumerate}[(i)]
        \item $L\left( \Phi^{Z, d}_{2;\epsilon}\right)\leq C_{\mathrm{sq}}\cdot \left(\log_2(1/\epsilon)+\log_2(d)+\log_2\left(\max\left\{1,Z\right\}\right)\right),$
       \item $M\left( \Phi^{Z, d}_{2;\epsilon}\right) \leq  C_{\mathrm{sq}} d^3 \cdot \left(\log_2(1/\epsilon)+\log_2(d)+\log_2\left(\max\left\{1,Z\right\}\right)\right), $
       \item  $M_1\left( \Phi^{Z, d}_{2;\epsilon}\right) \leq C_{\mathrm{sq}} d^3,\qquad \text{as well as} \qquad M_{L\left( \Phi^{Z, d}_{2;\epsilon}\right)}\left( \Phi^{Z, d}_{2;\epsilon}\right)  \leq C_{\mathrm{sq}} d^3, $
        \item $\sup_{\vect(\mathbf{A})\in K^Z_{d}}\left\|\mathbf{A}^2- \mathbf{matr}\left(\Realization_{\varrho}^{K^Z_{d}}\left( \Phi^{Z, d}_{2;\epsilon}\right)(\vect(\mathbf{A}))\right)\right\|_2 \leq \epsilon, $
        \item for all $\vect(\mathbf{A})\in K^Z_{d}$ we have 
        \[
        \left\| \mathbf{matr}\left(\mathrm{R}_{\varrho}^{K^Z_{d}}\left( \Phi^{Z, d}_{2;\epsilon}\right) (\vect(\mathbf{A}))\right)\right\|_2\leq \epsilon+ \|\mathbf{A}\|^2 \leq \epsilon+Z^2\leq 1+Z^2.
        \]
\end{enumerate}
\end{definition}
Our next goal is to approximate the map $\mathbf{A}\mapsto \mathbf{A}^k$ for an arbitrary $k\in \N_0.$ We start with the case that $k$ is a power of 2 and for the moment we only consider the set of all matrices the norm of which is bounded by $1/2$.

\begin{proposition} \label{prop:Pow2}
Let $d\in\N,~j\in \N,~$ as well as $\epsilon\in \left(0, 1/4\right)$. Then there exists a NN $\Phi^{1/2, d}_{2^j;\epsilon}$ with $d^2$- dimensional input and $d^2 $-dimensional output with the following properties:
\begin{enumerate}[(i)]
    \item $L\left(\Phi^{1/2, d}_{2^j;\epsilon}\right)\leq C_{\mathrm{sq}} j\cdot\left( \log_2(1/\epsilon)+\log_2(d) \right) +2C_{\mathrm{sq}}\cdot(j-1)$,
    \item $M\left(\Phi^{1/2, d}_{2^j;\epsilon}\right) \leq  C_{\mathrm{sq}} j d^3\cdot \left( \log_2(1/\epsilon)+\log_2(d) \right) +4C_{\mathrm{sq}}\cdot(j-1)d^3$,
    \item $M_1\left(\Phi^{1/2, d}_{2^j;\epsilon}\right) \leq C_{\mathrm{sq}} d^3,\qquad \text{as well as} \qquad M_{L\left(\Phi^{1/2, d}_{2^j;\epsilon}\right)}\left(\Phi^{1/2, d}_{2^j;\epsilon}\right) \leq C_{\mathrm{sq}} d^3$,
    \item $\sup_{\vect(\mathbf{A})\in  K^{1/2}_d}\left\|\mathbf{A}^{2^j}- \mathbf{matr}\left(\Realization_{\varrho}^{K^{1/2}_d}\left(\Phi^{1/2, d}_{2^j;\epsilon}\right)(\vect(\mathbf{A}))\right)\right\|_2 \leq \epsilon$,
    \item for every $\vect(\mathbf{A})\in  K^{1/2}_d$ we have 
    \[
    \left\| \mathbf{matr}\left(\Realization_{\varrho}^{K^{1/2}_d}\left(\Phi^{1/2, d}_{2^j;\epsilon}\right)(\vect(\mathbf{A}))\right)\right\|_2 \leq \epsilon +\left\|\mathbf{A}^{2^j}\right\|_2 \leq \epsilon +\left\|\mathbf{A}\right\|^{2^j}_2 \leq  \frac{1}{4}+\left(\frac{1}{2}\right)^{2^j}\leq \frac{1}{2}.
    \]
    \end{enumerate}
\end{proposition}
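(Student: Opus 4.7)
The construction is the natural one: iterated squaring. Define
\[
\Phi^{1/2,d}_{2^j;\epsilon} \;\coloneqq\; \underbrace{\Phi^{1/2,d}_{2;\epsilon'} \odot \Phi^{1/2,d}_{2;\epsilon'} \odot \cdots \odot \Phi^{1/2,d}_{2;\epsilon'}}_{j \text{ factors}},
\]
where $\Phi^{1/2,d}_{2;\epsilon'}$ is the squaring network from Definition \ref{def:SquareAndConstant} and $\epsilon'$ is a constant fraction of $\epsilon$, say $\epsilon' \coloneqq \epsilon/4$. Since $\epsilon \leq 1/4$ gives $\epsilon' \leq 1/16$, property (v) of $\Phi^{1/2,d}_{2;\epsilon'}$ ensures that each intermediate output lies in $K^{1/2}_d$, so successive squaring networks are applicable. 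Properties (iv) and (v) will then be established by induction on $j$, and properties (i)--(iii) will follow directly from Lemma \ref{lem:size}(a) together with the size estimates of Definition \ref{def:SquareAndConstant}.

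\textbf{Error analysis.} Set $\mathbf{B}_0 \coloneqq \mathbf{A}$ and $\mathbf{B}_i \coloneqq \mathbf{matr}(\Realization^{K^{1/2}_d}_\varrho(\Phi^{1/2,d}_{2;\epsilon'})(\vect(\mathbf{B}_{i-1})))$, and put $e_i \coloneqq \|\mathbf{B}_i - \mathbf{A}^{2^i}\|_2$. Using the identity
\[
\mathbf{B}_{i-1}^2 - (\mathbf{A}^{2^{i-1}})^2 \;=\; \mathbf{B}_{i-1}(\mathbf{B}_{i-1} - \mathbf{A}^{2^{i-1}}) + (\mathbf{B}_{i-1} - \mathbf{A}^{2^{i-1}})\mathbf{A}^{2^{i-1}},
\]
together with property (iv) of $\Phi^{1/2,d}_{2;\epsilon'}$ for the freshly introduced squaring error, I obtain the recursion
\[
e_i \;\leq\; \epsilon' + e_{i-1}\bigl(\|\mathbf{B}_{i-1}\|_2 + \|\mathbf{A}^{2^{i-1}}\|_2\bigr).
\]
The key observation is that $\|\mathbf{A}^{2^{i-1}}\|_2 \leq (1/2)^{2^{i-1}}$ decays doubly-exponentially, so for $i\geq 2$ one has $\|\mathbf{A}^{2^{i-1}}\|_2 \leq 1/4$; combined with the inductive bound $\|\mathbf{B}_{i-1}\|_2 \leq (1/2)^{2^{i-1}} + e_{i-1} \leq 1/2$, this yields an amplification factor $\leq 3/4$ for every $i\geq 2$, while $e_1 \leq \epsilon'$. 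Iterating gives $e_j \leq \epsilon'\sum_{k\geq 0}(3/4)^k \leq 4\epsilon' = \epsilon$, proving (iv). Property (v) then follows by the triangle inequality and $\|\mathbf{A}^{2^j}\|_2 \leq (1/2)^{2^j}\leq 1/4$.

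\textbf{Size and depth.} Since $\epsilon' = \epsilon/4$, one has $\log_2(1/\epsilon') \leq \log_2(1/\epsilon) + 2$, so Definition \ref{def:SquareAndConstant} gives
\[
L(\Phi^{1/2,d}_{2;\epsilon'}) \leq C_{\mathrm{sq}}(\log_2(1/\epsilon)+\log_2(d)) + 2C_{\mathrm{sq}}, \qquad M(\Phi^{1/2,d}_{2;\epsilon'}) \leq C_{\mathrm{sq}} d^3(\log_2(1/\epsilon)+\log_2(d)) + 2C_{\mathrm{sq}} d^3.
\]
Repeated application of Lemma \ref{lem:size}(a)(i)--(ii) across the $j-1$ sparse concatenations yields bounds of the claimed form for $L$ and $M$, with the additive $2C_{\mathrm{sq}}(j-1)$ contribution absorbing the overhead from the constant shift $\log_2(1/\epsilon') - \log_2(1/\epsilon)$. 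Lemma \ref{lem:size}(a)(iii)--(iv) propagate the bounds $M_1, M_{L(\cdot)} \leq C_{\mathrm{sq}} d^3$ from the first and last squaring factor to the full concatenation, giving (iii).

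\textbf{Main obstacle.} The subtlety is the error analysis: a naive bound $e_i \leq \epsilon' + e_{i-1}$ would force $\epsilon' = \epsilon/j$ and introduce an unwanted $j\log_2(j)$ factor in the layer count. Avoiding this requires exploiting that $\|\mathbf{A}^{2^{i-1}}\|_2$ decays doubly-exponentially so that the error amplification factor is bounded by a fixed constant strictly less than $1$ from the second iteration onward, letting the errors sum geometrically rather than linearly. Once this observation is in place, everything else is bookkeeping with Lemma \ref{lem:size}.
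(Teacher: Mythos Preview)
Your approach is essentially the same as the paper's: iterated squaring with per-step accuracy a fixed fraction of $\epsilon$, exploiting the doubly-exponential decay of $\|\mathbf{A}^{2^{i}}\|_2$ so that the error amplification factor is strictly less than $1$ and the errors sum geometrically. The paper's inductive definition $\Phi^{1/2,d}_{2^{j+1};\epsilon} \coloneqq \Phi^{1,d}_{2;\epsilon/4} \odot \Phi^{1/2,d}_{2^j;\epsilon}$ unrolls to exactly the $j$-fold concatenation you write down, and the error analysis is the same computation organized as an induction rather than as a recursion on $e_i$.

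One minor bookkeeping slip: because all $j$ of your squaring factors use accuracy $\epsilon' = \epsilon/4$, each one carries the overhead $\log_2(1/\epsilon') - \log_2(1/\epsilon) = 2$, so the additive term in the depth bound comes out as $2C_{\mathrm{sq}} j$ rather than the $2C_{\mathrm{sq}}(j-1)$ you claim and the proposition requires (and similarly the weight bound is off by $2C_{\mathrm{sq}} d^3$). The paper obtains the sharper $(j-1)$ by taking the \emph{innermost} squaring at accuracy $\epsilon$ rather than $\epsilon/4$: at that first step there is no accumulated error yet, so no need to shrink. With that single adjustment your construction and bounds match the paper's exactly.
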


\begin{proof}
We show the statement by induction over $j\in \N$. 
For $j=1$, the statement follows by choosing $\Phi^{1/2, d}_{2;\epsilon}$ as in Definition \ref{def:SquareAndConstant}. 
Assume now, as induction hypothesis, that the claim holds for an arbitrary, but fixed $j\in \N$, i.e., there exists a NN $\Phi^{1/2, d}_{2^j;\epsilon}$ such that 
\begin{align}\label{eq:pow-1}
     \left\|\mathbf{matr}\!\left(\!\Realization_{\varrho}^{K_d^{1/2}}\!\left(\Phi^{1/2,d}_{2^{j};\epsilon}\right)\! (\vect(\mathbf{A}))\!\right)\! -\mathbf{A}^{2^{j}} \right\|_2 \leq \epsilon, \qquad \left\|\mathbf{matr}\left(\!\Realization_{\varrho}^{K_d^{1/2}}\!\left(\Phi^{1/2,d}_{2^{j};\epsilon}\right) (\vect(\mathbf{A}))\!\right)\! \right\|_2 \leq \epsilon+ \left(\frac{1}{2}\right)^{2^j}
\end{align}
and $\Phi^{1/2, d}_{2^j;\epsilon}$ satisfies (i),(ii),(iii). Now we define 
\[
\Phi^{1/2, d}_{2^{j+1};\epsilon} \coloneqq \Phi^{1, d}_{2;\frac{\epsilon}{4}}\odot \Phi^{1/2, d}_{2^{j};\epsilon}. 
\]
By the triangle inequality, we obtain for any $\vect(\mathbf{A})\in K_d^{1/2}$
\begin{align}
    &\left\|\mathbf{matr}\left(\Realization_{\varrho}^{K_d^{1/2}}\left(\Phi^{1/2,d}_{2^{j+1};\epsilon} \right)(\vect(\mathbf{A}))  \right)-\mathbf{A}^{2^{j+1}}\right\|_2 \nonumber \\ 
    &\leq \left\|\mathbf{matr}\left(\Realization_{\varrho}^{K_d^{1/2}}\left(\Phi^{1/2,d}_{2^{j+1};\epsilon}\right) (\vect(\mathbf{A})) \right)-\mathbf{A}^{2^{j}} \mathbf{matr}\left(\Realization_{\varrho}^{K_d^{1/2}}\left(\Phi^{1/2,d}_{2^{j};\epsilon}\right) (\vect(\mathbf{A}))\right)\right\|_2\nonumber\\
    &\qquad + \left\| \mathbf{A}^{2^{j}} \mathbf{matr}\left(\Realization_{\varrho}^{K_d^{1/2}}\left(\Phi^{1/2,d}_{2^{j};\epsilon}\right) (\vect(\mathbf{A}))\right) - \left(\mathbf{A}^{2^{j}}\right)^2  \right\|_2. \label{eq:estimate1111}
\end{align}
By construction of $\Phi^{1/2, d}_{2^{j+1};\epsilon}$, we know that 
$$
\left\|\mathbf{matr}\left(\Realization_{\varrho}^{K_d^{1/2}}\left(\Phi^{1/2,d}_{2^{j+1};\epsilon} \right)(\vect(\mathbf{A}))  \right) - \left(\mathbf{matr}\left(\Realization_{\varrho}^{K_d^{1/2}}\left(\Phi^{1/2, d}_{2^{j};\epsilon}\right) (\vect(\mathbf{A}))\right)\right)^2\right\|_2 \leq \frac{\epsilon}{4}.
$$
Therefore, using the triangle inequality and the fact that $\|\cdot \|_2$ is a submultiplicative operator norm, we derive that 
\begin{align}
&\left\|\mathbf{matr}\left(\Realization_{\varrho}^{K_d^{1/2}}\left(\Phi^{1/2,d}_{2^{j+1};\epsilon}\right) (\vect(\mathbf{A})) \right)-\mathbf{A}^{2^{j}} \mathbf{matr}\left(\Realization_{\varrho}^{K_d^{1/2}}\left(\Phi^{1/2,d}_{2^{j};\epsilon}\right) (\vect(\mathbf{A}))\right)\right\|_2\nonumber\\
&\leq \frac{\epsilon}{4} + \left\| \left(\mathbf{matr}\left(\Realization_{\varrho}^{K_d^{1/2}}\left(\Phi^{1/2,d}_{2^{j};\epsilon}\right) (\vect(\mathbf{A}))\right)\right)^2 - \mathbf{A}^{2^{j}} \mathbf{matr}\left(\Realization_{\varrho}^{K_d^{1/2}}\left(\Phi^{1/2,d}_{2^{j};\epsilon}\right) (\vect(\mathbf{A}))\right)\right\|_2\nonumber\\
& \leq \frac{\epsilon}{4} + \left\| \mathbf{matr}\left(\Realization_{\varrho}^{K_d^{1/2}}\left(\Phi^{1/2,d}_{2^{j};\epsilon}\right) (\vect(\mathbf{A}))\right) - \mathbf{A}^{2^{j}}\right\|_2  \left\|\mathbf{matr}\left(\Realization_{\varrho}^{K_d^{1/2}}\left(\Phi^{1/2,d}_{2^{j};\epsilon}\right) (\vect(\mathbf{A}))\right) \right\|_2\nonumber\\
&\leq \frac{\epsilon}{4} + \epsilon \cdot \left(\epsilon + \left(\frac{1}{2}\right)^{2^j}\right) \leq \frac{3}{4} \epsilon, \label{eq:estimate1576}
\end{align}
where the penultimate estimate follows by the induction hypothesis \eqref{eq:pow-1} and $\epsilon <1/4$. 
Hence, since $\|\cdot \|_2$ is a submultiplicative operator norm, we obtain 
\begin{align}
    \left\| \mathbf{A}^{2^{j}} \mathbf{matr}\left(\Realization_{\varrho}^{K_d^{1/2}}\left(\Phi^{1/2,d}_{2^{j};\epsilon}\right) (\vect(\mathbf{A}))\right) - \left(\mathbf{A}^{2^{j}}\right)^2  \right\|_2 \nonumber
    &\leq \left\| \mathbf{matr}\left(\Realization_{\varrho}^{K_d^{1/2}}\left(\Phi^{1/2,d}_{2^{j};\epsilon}\right) (\vect(\mathbf{A}))\right) - \mathbf{A}^{2^{j}} \right\|_2 \left\|\mathbf{A}^{2^{j}}\right\|_2 \\ &\leq \frac{\epsilon}{4}, \label{eq:estimate1613}
\end{align}
where we used $\left\|\mathbf{A}^{2^{j}}\right\|_2\leq 1/4$ and the induction hypothesis \eqref{eq:pow-1}. Applying \eqref{eq:estimate1613} and \eqref{eq:estimate1576} to \eqref{eq:estimate1111} yields 
\begin{align}
    \left\|\mathbf{matr}\left(\Realization_{\varrho}^{K_d^{1/2}}\left(\Phi^{1/2,d}_{2^{j+1};\epsilon}\right) (\vect(\mathbf{A}))  \right)-\mathbf{A}^{2^{j+1}}\right\|_2 \leq \epsilon.\label{eq:InductionStepFinished}
\end{align}
A direct consequence of \eqref{eq:InductionStepFinished} is that 
\begin{align}
    \left\|\mathbf{matr}\left(\Realization_{\varrho}^{K_d^{1/2}}\left(\Phi^{1/2, d}_{2^{j+1};\epsilon}\right) (\vect(\mathbf{A}))\right) \right\|_2 \leq \epsilon + \left\| \mathbf{A}^{2^{j+1}}\right\|_2 \leq \epsilon + \|\mathbf{A}\|^{2^{j+1}}_2.\label{eq:InductionStepFinished2}
\end{align}
The estimates \eqref{eq:InductionStepFinished} and \eqref{eq:InductionStepFinished2} complete the proof of the assertions (iv) and (v) of the proposition statement. Now we estimate the size of  $\Phi^{1/2, d}_{2^{j+1};\epsilon}$. By the induction hypothesis and Lemma \ref{lem:size}(a)(i), we obtain
\begin{align*}
    L\left(\Phi^{1/2, d}_{2^{j+1};\epsilon}\right)&= L\left(\Phi^{1, d}_{2;\frac{\epsilon}{4}}\right)+L\left(\Phi^{1/2, d}_{2^{j};\epsilon}\right) \\
    &\leq C_{\mathrm{sq}} \cdot \left(  \log_2(1/\epsilon) +\log_2(d) + \log_2(4)  + j \log_2(1/\epsilon) + 2\cdot(j-1) +j \log_2(d) \right) \\ &= C_{\mathrm{sq}} \cdot \left((j+1)\log_2(1/\epsilon) +(j+1)\log_2(d)+2j\right),
\end{align*}
which implies (i). Moreover, by the induction hypothesis and Lemma \ref{lem:size}(a)(ii), we conclude that 
\begin{align*}
    M\left(\Phi^{1/2,d}_{2^{j+1};\epsilon}\right) &\leq M\left(\Phi^{1,d}_{2;\frac{\epsilon}{4}}\right) + M\left(\Phi^{1/2, d}_{2^{j};\epsilon}\right) + M_1\left(\Phi^{1,d}_{2;\frac{\epsilon}{4}}\right) + M_{L\left(\Phi^{1/2, d}_{2^{j};\epsilon}\right)}\left(\Phi^{1/2,d}_{2^j;\epsilon}\right) \\
    &\leq C_{\mathrm{sq}} d^3\cdot\left( \log_2(1/\epsilon) +\log_2(d)+\log_2(4) + j \log_2(1/\epsilon) +j  \log_2(d) + 4\cdot(j-1) \right)+ 2C_{\mathrm{sq}}d^3 \\ 
    &= C_{\mathrm{sq}}  d^3\cdot\left( (j+1)\log_2(1/\epsilon) +(j+1)\log_2(d)+ 4j \right),
\end{align*}
implying (ii). Finally, it follows from Lemma \ref{lem:size}(a)(iii) in combination with the induction hypothesis as well Lemma  \ref{lem:size}(a)(iv) that 
\begin{align*}
    M_1\left(\Phi^{1/2, d}_{2^{j+1};\epsilon}\right) = M_1\left(\Phi^{1/2, d}_{2^{j};\epsilon}\right) \leq C_{\mathrm{sq}}   d^3, 
\end{align*}
as well as
\begin{align*}
    M_{L\left(\Phi^{1/2, d}_{2^{j+1};\epsilon}\right)}\left(\Phi^{1/2, d}_{2^{j+1};\epsilon}\right) = M_{L\left(\Phi^{1,d}_{2;\frac{\epsilon}{4}}\right)}\left(\Phi^{1,d}_{2;\frac{\epsilon}{4}}\right) \leq C_{\mathrm{sq}}  d^3,
\end{align*}
which finishes the proof.
\end{proof}

We proceed by demonstrating, how to build a NN that emulates the map $\mathbf{A}\mapsto \mathbf{A}^k$ for an arbitrary $k \in \N_0$. Again, for the moment we only consider the set of all matrices the norms of which are bounded by $1/2$. For the case of the set of all matrices the norms of which are bounded by an arbitrary $Z>0$, we refer to Corollary \ref{cor:pow}.

\begin{proposition}\label{prop:PowGeneral}
Let $d\in\N$, $k\in \N_0$, and $\epsilon\in \left(0,1/4\right)$. Then, there exists a NN $\Phi^{1/2, d}_{k;\epsilon}$ with $d^2$- dimensional input and $d^2$-dimensional output satisfying the following properties:
\begin{enumerate}[(i)]
\item \begin{align*}
        L\left(\Phi^{1/2, d}_{k;\epsilon}\right)&\leq  \left\lfloor \log_2\left(\max\{k,2\}\right)\right\rfloor  L\left( \Phi^{1,d}_{\mathrm{mult};\frac{\epsilon}{4}} \right)+ L\left(\Phi^{1/2, d}_{2^{\left\lfloor \log_2(\max\{k,2\})\right\rfloor};\epsilon}\right)\\
        &\leq  2C_{\mathrm{sq}} \left\lfloor \log_2\left(\max\{k,2\}\right) \right \rfloor \cdot \left(\log_2(1/\epsilon)+\log_2(d)+2 \right),
    \end{align*}
    \item $M\left(\Phi^{1/2, d}_{k;\epsilon}\right) \leq \frac{3}{2}C_{\mathrm{sq}}  d^3\cdot \left\lfloor \log_2\left(\max\{k,2\}\right)\right\rfloor \cdot \left(\left\lfloor\log_2\left(\max\{k,2\}\right)\right\rfloor +1\right)\cdot \left(\log_2(1/\epsilon)+\log_2(d)+4 \right)$,
    \item $M_1\left(\Phi^{1/2, d}_{k;\epsilon} \right) \leq C_{\mathrm{sq}}\cdot \left(\left\lfloor \log_2\left(\max\{k,2\}\right) \right\rfloor +1\right)   d^3, \qquad \text{as well as} \qquad M_{L\left(\Phi^{1/2, d}_{k;\epsilon}\right)}\left( \Phi^{1/2, d}_{k;\epsilon}\right)\leq C_{\mathrm{sq}}  d^3, $
    \item $\sup_{\vect(\mathbf{A})\in  K^{1/2}_d}\left\|\mathbf{A}^k- \mathbf{matr}\left(\Realization_{\varrho}^{K^{1/2}_d}\left(\Phi^{1/2, d}_{k;\epsilon}\right)(\vect(\mathbf{A}))\right)\right\|_2 \leq \epsilon$,
    \item for any $\vect(\mathbf{A})\in  K^{1/2}_d$ we have 
    \[
    \left\| \mathbf{matr}\left(\Realization_{\varrho}^{K^{1/2}_d}\left(\Phi^{1/2, d}_{k;\epsilon}\right)(\vect(\mathbf{A}))\right)\right\|_2 \leq \epsilon +\|\mathbf{A}^k\|_2 \leq \frac{1}{4}+\|\mathbf{A}\|_2^k \leq \frac{1}{4}+\left(\frac{1}{2}\right)^k.
    \]
    \end{enumerate}
\end{proposition}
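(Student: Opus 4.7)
The plan is to exploit the binary expansion of $k$: writing $k = \sum_{j=0}^J b_j 2^j$ with $J = \lfloor \log_2(\max\{k, 2\}) \rfloor$ and $b_j \in \{0, 1\}$, we have $\mathbf{A}^k = \prod_{j : b_j = 1} \mathbf{A}^{2^j}$. The network $\Phi^{1/2, d}_{k;\epsilon}$ will be assembled by (1) computing each required power $\mathbf{A}^{2^j}$ in parallel using $\Phi^{1/2, d}_{2^j; \epsilon/4}$ from Proposition \ref{prop:Pow2}, and (2) chaining their outputs through matrix-multiplication networks $\Phi^{1, d}_{\mathrm{mult}; \epsilon/4}$ from Proposition \ref{prop:Multiplikation}. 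The degenerate cases $k = 0$ and $k = 1$ are dispatched directly: for $k = 0$, take a one-layer NN with zero weight matrix and bias $\vect(\mathbf{Id}_{\R^d})$, padded with $\Phi^{\mathbf{Id}}_{d^2, L}$ from Lemma \ref{lem:ReLUidentity}; for $k = 1$, take $\Phi^{\mathbf{Id}}_{d^2, L}$ itself. Since $\lfloor \log_2(\max\{k, 2\})\rfloor = 1$ in both cases, the claimed bounds hold trivially.

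For $k \geq 2$, let $J = \lfloor \log_2 k \rfloor$ and enumerate the active indices $S = \{j : b_j = 1\} = \{j_1 < \cdots < j_s\}$ (with $j_s = J$ and $s \leq J + 1$); define $\Psi_{j} \coloneqq \Phi^{1/2, d}_{2^{j}; \epsilon/4}$ for $j \geq 1$ and $\Psi_0 \coloneqq \Phi^{\mathbf{Id}}_{d^2, L(\Phi^{1/2, d}_{2^J; \epsilon/4})}$ if $0 \in S$. I would parallelize $(\Psi_{j_1}, \ldots, \Psi_{j_s})$ via $\Paral$ (using the depth-equalization device $E_{L(\Psi_J)}$) to produce approximations $\tilde{\mathbf{A}}^{2^{j_1}}, \ldots, \tilde{\mathbf{A}}^{2^{j_s}}$ at a common depth, and then sparse-concatenate a chain of $s - 1 \leq J$ copies of $\Phi^{1, d}_{\mathrm{mult}; \epsilon/4}$, where the $m$-th multiplication consumes the running product $\tilde{P}_{m-1}$ together with the fresh factor $\tilde{\mathbf{A}}^{2^{j_m}}$ and produces $\tilde{P}_m$; the as-yet-unused power blocks are carried across each multiplication layer by identity subnetworks (Lemma \ref{lem:ReLUidentity}).

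The main technical obstacle is the error propagation through the chain. Writing $P_m \coloneqq \mathbf{A}^{\sum_{i \leq m} 2^{j_i}}$, the triangle inequality, submultiplicativity of $\|\cdot\|_2$, Proposition \ref{prop:Multiplikation}(iv), and Proposition \ref{prop:Pow2}(iv)--(v) yield
\[
\|\tilde{P}_m - P_m\|_2 \;\leq\; \frac{\epsilon}{4} \,+\, \|\tilde{P}_{m-1}\|_2 \cdot \frac{\epsilon}{4} \,+\, \|\tilde{P}_{m-1} - P_{m-1}\|_2 \cdot \|\mathbf{A}^{2^{j_m}}\|_2,
\]
with base case $\|\tilde{P}_1 - P_1\|_2 \leq \epsilon/4$. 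The crucial point is that $\|\mathbf{A}\|_2 \leq 1/2$ forces $\|\mathbf{A}^{2^{j_m}}\|_2 \leq 1/2$; combined with the inductive bound $\|\tilde{P}_{m-1}\|_2 \leq \|P_{m-1}\|_2 + \epsilon \leq 1/2 + 1/4 \leq 1$, this gives the contraction-type recursion $e_m \leq \epsilon/2 + e_{m-1}/2$, which solves to $e_m \leq \epsilon$ for all $m \leq s$. This yields (iv), and (v) follows from (iv) together with $\|\mathbf{A}^k\|_2 \leq \|\mathbf{A}\|_2^k$. The contraction is what allows using accuracy $\epsilon/4$ (rather than $\epsilon/(CJ)$) in each subnetwork and thereby keeps $\log_2(1/\epsilon)$ (without an additional $\log_2 J$) in the size bounds.

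The layer and weight bounds (i)--(iii) then follow routinely by applying Lemma \ref{lem:size} to the parallelization and the chain of sparse concatenations, together with the size estimates of Propositions \ref{prop:Pow2}(i)--(iii) and \ref{prop:Multiplikation}(i)--(iii). The depth is at most $L(\Psi_J) + (s - 1) \cdot L(\Phi^{1, d}_{\mathrm{mult}; \epsilon/4}) \leq L(\Phi^{1/2, d}_{2^J; \epsilon/4}) + J \cdot L(\Phi^{1, d}_{\mathrm{mult}; \epsilon/4})$, yielding (i). The weight count of the parallelized power block is $\sum_{j \in S,\, j \geq 1} M(\Phi^{1/2, d}_{2^j; \epsilon/4}) \leq C_{\mathrm{sq}} d^3 \cdot J(J+1)/2 \cdot (\log_2(1/\epsilon) + \log_2(d) + 6)$ by Proposition \ref{prop:Pow2}(ii), while the $s - 1 \leq J$ multiplication subnetworks contribute at most $J \cdot C_{\mathrm{sq}} d^3 (\log_2(1/\epsilon) + \log_2(d) + O(1))$ and the identity carry-throughs at most $O(J^2 d^2 (\log_2(1/\epsilon) + \log_2(d)))$ (dominated by the parallel block); together these fit within the stated $\tfrac{3}{2} C_{\mathrm{sq}} d^3 J(J+1)(\log_2(1/\epsilon) + \log_2(d) + 4)$ bound. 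Finally, (iii) follows from Lemma \ref{lem:size}(b)(iv) (first-layer weights of the parallelization, bounded via Proposition \ref{prop:Pow2}(iii)) and Lemma \ref{lem:size}(a)(iv) (last-layer weights of the chain, bounded via Proposition \ref{prop:Multiplikation}(iii)).
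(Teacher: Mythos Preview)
Your argument is correct, but it takes a genuinely different route from the paper's. The paper proceeds by \emph{induction on $k$}: for $k$ not a power of two it sets $j=\lfloor\log_2 k\rfloor$, $t=k-2^j$, and defines
\[
\Phi^{1/2,d}_{k;\epsilon}\coloneqq \Phi^{1,d,d,d}_{\mathrm{mult};\epsilon/4}\odot\Paral\bigl(\Phi^{1/2,d}_{2^{j};\epsilon},\,\Phi^{1/2,d}_{t;\epsilon}\bigr),
\]
invoking the induction hypothesis on $t<2^j$. Unrolled, this is exactly your product $\prod_{b_j=1}\mathbf{A}^{2^j}$, but the \emph{architecture} differs: the paper nests the parallelizations (so the power networks sit at staggered depths), whereas you parallelize all $\Psi_{j_i}$ at a common depth first and then chain the multiplications, carrying the unused blocks through with identity subnetworks. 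The paper's recursion makes the size bookkeeping cleaner---each inductive step adds one multiplication and one max in the depth, and the weight bound follows from a short telescoping sum---while your direct construction forces you to track the carry-through identity weights explicitly (the $O(J^2 d^2)$ term you mention). On the error side the two arguments also diverge: the paper gets a two-term bound at each step (using power networks at accuracy $\epsilon$, not $\epsilon/4$), whereas your chain needs the geometric contraction $e_m\le \epsilon/2 + e_{m-1}/2$; both land at the same final $\epsilon$.

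One small point: because you use accuracy $\epsilon/4$ in the repeated-squaring networks, the first (structural) inequality in~(i) as literally stated---with the term $L\bigl(\Phi^{1/2,d}_{2^{J};\epsilon}\bigr)$---is not quite what your construction yields; you get $L\bigl(\Phi^{1/2,d}_{2^{J};\epsilon/4}\bigr)$ instead. The second (numerical) inequality in~(i), and everything downstream, is unaffected.
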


\begin{proof}
We prove the result per induction over $k\in \N_{0}$. 
The cases $k=0$ and $k=1$ hold trivially by defining the NNs 
\[
    \Phi^{1/2, d}_{0;\epsilon}\coloneqq \left(\left(\mathbf{0}_{\R^{d^2}\times \R^{d^2}},\vect(\mathbf{Id}_{\R^{d}})\right)\right),\qquad \Phi^{1/2, d}_{1;\epsilon}\coloneqq \left(\left(\mathbf{Id}_{\R^{d^2}},\mathbf{0}_{\R^{d^2}}\right)\right).
\]
For the induction hypothesis, we claim that the result holds true for all $k' \leq k \in \N$. If $k$ is a power of two, then the result holds per Proposition \ref{prop:Pow2}, thus we can assume without loss of generality, that $k$ is not a power of two. We define $j \coloneqq \lfloor\log_2(k)\rfloor$ such that, for $t \coloneqq k - 2^{j}$, we have that $0 < t < 2^{j}$. This implies that $A^k= A^{2^j}  A^{t}$.
Hence, by Proposition \ref{prop:Pow2} and by the induction hypothesis, respectively, there exist a NN $\Phi^{1/2,d}_{2^j;\epsilon}$ satisfying (i)-(v) of Proposition \ref{prop:Pow2} and a NN $\Phi^{1/2,d}_{t;\epsilon}$ satisfying (i)-(v) of the statement of this proposition. We now define the NN 
\[
    \Phi^{1/2, d}_{k;\epsilon}\coloneqq \Phi^{1, d,d,d}_{\mathrm{mult};\frac{\epsilon}{4}}\odot \Paral\left(\Phi^{1/2, d}_{2^j;\epsilon},\Phi^{1/2, d}_{t;\epsilon}\right)\conc \left(\left(  \begin{pmatrix} \mathbf{Id}_{\R^{d^2}} \\ \mathbf{Id}_{\R^{d^2}}  \end{pmatrix} , \mathbf{0}_{\R^{2d^2}}\right)\right).
\]
By construction and Lemma \ref{lem:size}(a)(iv), we first observe that 
\begin{align*}
    M_{L\left(\Phi^{1/2,d}_{k;\epsilon} \right)}\left(\Phi^{1/2,d}_{k;\epsilon} \right) = M_{L\left( \Phi^{1,d,d,d}_{\mathrm{mult};\frac{\epsilon}{4}} \right)}\left( \Phi^{1,d,d,d}_{\mathrm{mult};\frac{\epsilon}{4}} \right) \leq C_{\mathrm{sq}}  d^3.
\end{align*}
Moreover, we obtain by the induction hypothesis as well as Lemma \ref{lem:size}(a)(iii) in combination with Lemma \ref{lem:size}(b)(iv) that 
\begin{align*}
    M_1\left( \Phi^{1/2,d}_{k;\epsilon} \right) &= M_1\left(\Paral\left(\Phi^{1/2,d}_{2^j;\epsilon},\Phi^{1/2,d}_{t;\epsilon}\right) \right) = M_1\left(\Phi^{1/2,d}_{2^j;\epsilon} \right) + M_{1}\left(\Phi^{1/2,d}_{t;\epsilon} \right) \\
    &\leq C_{\mathrm{sq}}  d^3+ (j+1)  C_{\mathrm{sq}}  d^3 = (j+2)  C_{\mathrm{sq}}  d^3.
\end{align*}
This shows (iii). To show (iv), we perform a similar estimate as the one following \eqref{eq:estimate1111}. By the triangle inequality,
\begin{align}
    &\left\|\mathbf{matr}\left(\Realization_{\varrho}^{K_d^{1/2}}\left(\Phi^{1/2,d}_{k;\epsilon}\right)  (\vect(\mathbf{A})) \right)-\mathbf{A}^{k}\right\|_2 \nonumber\\ 
    &\leq \left\|\mathbf{matr}\left(\Realization_{\varrho}^{K_d^{1/2}}\left(\Phi^{1/2,d}_{k;\epsilon}\right) (\vect(\mathbf{A}))  \right)- \mathbf{A}^{2^j} \mathbf{matr}\left(\Realization_{\varrho}^{K_d^{1/2}}\left(\Phi^{1/2,d}_{t;\epsilon}\right) (\vect(\mathbf{A}))\right)\right\|_2 \nonumber \\
    &\qquad + \left\| \mathbf{A}^{2^j} \mathbf{matr}\left(\Realization_{\varrho}^{K_d^{1/2}}\left(\Phi^{1/2,d}_{t;\epsilon}\right) (\vect(\mathbf{A}))\right) - \mathbf{A}^{2^{j}}  \mathbf{A}^{t}  \right\|_2.\label{eq:anotherEstimate1241}
\end{align}
By the construction of $\Phi^{1/2, d}_{k;\epsilon}$ and the Proposition \ref{prop:Multiplikation}, we conclude that 
\begin{align*}
    &\bigg\|\mathbf{matr}\left(\Realization_{\varrho}^{K_d^{1/2}}\left(\Phi^{1/2,d}_{k;\epsilon}\right) (\vect(\mathbf{A}))  \right) \\ &\quad - \mathbf{matr}\left(\Realization_{\varrho}^{K_d^{1/2}}\left(\Phi^{1/2,d}_{2^j;\epsilon}\right) (\vect(\mathbf{A}))\right) \mathbf{matr}\left(\Realization_{\varrho}^{K_d^{1/2}}\left(\Phi^{1/2,d}_{t;\epsilon}\right) (\vect(\mathbf{A}))\right)\bigg\|_2 \\ &\qquad \leq \frac{\epsilon}{4}.
\end{align*}
Hence, using \eqref{eq:anotherEstimate1241}, we can estimate
\begin{align*}
    &\left\|\mathbf{matr}\left(\Realization_{\varrho}^{K_d^{1/2}}\left(\Phi^{1/2,d}_{k;\epsilon} (\vect(\mathbf{A}))\right)  \right)-\mathbf{A}^{k}\right\|_2 \nonumber\\ 
    &\leq \frac{\epsilon}{4} + \left\|\mathbf{matr}\left(\Realization_{\varrho}^{K_d^{1/2}}\left(\Phi^{1/2,d}_{2^j;\epsilon}\right) (\vect(\mathbf{A}))\right) \mathbf{matr}\left(\Realization_{\varrho}^{K_d^{1/2}}\left(\Phi^{1/2,d}_{t;\epsilon}\right) (\vect(\mathbf{A}))\right)\right.\\
    &\qquad -   \left. \mathbf{A}^{2^j} \mathbf{matr}\left(\Realization_{\varrho}^{K_d^{1/2}}\left(\Phi^{1/2,d}_{t;\epsilon}\right) (\vect(\mathbf{A}))\right)  \right\|_2\\
    &\qquad \qquad + \left\|\mathbf{A}^{2^{j}}  \mathbf{matr}\left(\Realization_{\varrho}^{K_d^{1/2}}\left(\Phi^{1/2,d}_{t;\epsilon}\right) (\vect(\mathbf{A}))\right) - \mathbf{A}^{k} \right\|_2\\
     &\leq \frac{\epsilon}{4} + \left\|\mathbf{matr}\left(\Realization_{\varrho}^{K_d^{1/2}}\left(\Phi^{1/2,d}_{t;\epsilon}\right) (\vect(\mathbf{A}))\right) \right\|_2   \left\|\mathbf{matr}\left(\Realization_{\varrho}^{K_d^{1/2}}\left(\Phi^{1/2,d}_{2^{j};\epsilon} \right)(\vect(\mathbf{A}))\right)- \mathbf{A}^{2^j}  \right\|_2   \\ 
    & \qquad +  \left\| \mathbf{A}^{2^j}\right\|_2  \left\|\mathbf{matr}\left(\Realization_{\varrho}^{K_d^{1/2}}\left(\Phi^{1/2,d}_{t;\epsilon}\right) (\vect(\mathbf{A}))\right) -\mathbf{A}^{t}\right\|_2 \eqqcolon \frac{\epsilon}{4} + \mathrm{I}+\mathrm{II}.
    \end{align*}
    We now consider two cases: If $t=1,$ then we know by the construction of $\Phi^{1/2, d}_{1;\epsilon}$ that $\mathrm{II}=0$. Thus 
\begin{align*}
    \frac{\epsilon}{4}+\mathrm{I}+\mathrm{II} = \frac{\epsilon}{4} + \mathrm{I}\leq \frac{\epsilon}{4}+\|\mathbf{A}\|_2  \epsilon \leq \frac{3\epsilon}{4}\leq \epsilon.
\end{align*}
If $t\geq 2$, then 
\begin{align*}
    \frac{\epsilon}{4}+\mathrm{I}+\mathrm{II}&\leq \frac{\epsilon}{4}+ \left(\epsilon + \|\mathbf{A}\|^{t}+\|\mathbf{A}\|^{2^j} \right)   \epsilon \leq \frac{\epsilon}{4} +  \left(\frac{1}{4}+  \left(\frac{1}{2}\right)^{t}+ \left(\frac{1}{2} \right)^{2^j} \right)  \epsilon \leq  \frac{\epsilon}{4}+\frac{3\epsilon}{4} = \epsilon,
\end{align*}
where we have used that $\left(\frac{1}{2}\right)^{t}\leq \frac{1}{4}$ for $t \geq 2$. This shows (iv). In addition, by an application of the triangle inequality, we have that 
\[
\left\|\mathbf{matr}\left(\Realization_{\varrho}^{K_d^{1/2}}\left(\Phi^{1/2, d}_{k;\epsilon}\right) (\vect(\mathbf{A}))\right) \right\|_2 \leq \epsilon + \left\| \mathbf{A}^{k} \right\|_2 \leq \epsilon + \|\mathbf{A}\|_2^{k} \leq \frac{1}{4}+\left(\frac{1}{2}\right)^k.
\] 
This shows (v). Now we analyze the size of $\Phi^{1/2,d}_{k;\epsilon}$. We have by Lemma \ref{lem:size}(a)(i)  in combination with Lemma \ref{lem:size}(b)(i) and by the induction hypothesis that 
\begin{align*}
    L\left( \Phi^{1/2,d}_{k;\epsilon}\right) &\leq L\left(\Phi^{1,d,d,d}_{\mathrm{mult};\frac{\epsilon}{4}} \right) + \max\left\{L\left(\Phi^{1/2,d}_{2^j;\epsilon}\right), L\left(\Phi^{1/2,d}_{t;\epsilon} \right) \right\} \\
    &\leq L\left(\Phi^{1,d,d,d}_{\mathrm{mult};\frac{\epsilon}{4}} \right) + \max\left\{L\left(\Phi^{1/2,d}_{2^j;\epsilon}\right), (j-1)  L\left(\Phi^{1,d,d,d}_{\mathrm{mult};\frac{\epsilon}{4}}\right) +L\left(\Phi^{1/2,d}_{2^{j-1};\epsilon}  \right) \right\} \\
    &\leq L\left(\Phi^{1,d,d,d}_{\mathrm{mult};\frac{\epsilon}{4}} \right) + \max\left\{(j-1)  L\left(\Phi^{1,d,d,d}_{\mathrm{mult};\frac{\epsilon}{4}}\right)+L\left(\Phi^{1/2,d}_{2^j;\epsilon}\right), (j-1)  L\left(\Phi^{1,d,d,d}_{\mathrm{mult};\frac{\epsilon}{4}}\right) +L\left(\Phi^{1/2,d}_{2^{j-1};\epsilon}  \right) \right\} \\
    &\leq j  L\left(\Phi^{1,d,d,d}_{\mathrm{mult};\frac{\epsilon}{4}} \right) + L\left(\Phi^{1/2,d}_{2^{j};\epsilon}   \right) \\
    &\leq C_{\mathrm{sq}}  j\cdot \left(\log_2(1/\epsilon)+\log_2(d)+ 2\right) +C_{\mathrm{sq}}  j\cdot\left( \log_2(1/\epsilon)+\log_2(d) \right) +2C_{\mathrm{sq}}\cdot (j-1) \\
    &\leq 2  C_{\mathrm{sq}}  j\cdot \left(\log_2(1/\epsilon)+\log_2(d) +2 \right),
\end{align*}
which implies (i). Finally, we address the number of non-zero weights of the resulting NN. We first observe that, by Lemma \ref{lem:size}(a)(ii),
\begin{align*}
    M\left(\Phi^{1/2,d}_{k;\epsilon} \right) &\leq \left(M\left(\Phi^{1,d,d,d}_{\mathrm{mult};\frac{\epsilon}{4}} \right) +M_1\left( \Phi^{1,d,d,d}_{\mathrm{mult};\frac{\epsilon}{4}}\right)\right)+ M\left( \Paral\left(\Phi^{1/2,d}_{2^j;\epsilon}, \Phi^{1/2,d}_{t;\epsilon}\right)\right) \\
    & \qquad +  M_{L\left(\Paral\left(\Phi^{1/2,d}_{2^j;\epsilon}, \Phi^{1/2,d}_{t;\epsilon}\right)\right)}\left(\Paral\left(\Phi^{1/2,d}_{2^j;\epsilon}, \Phi^{1/2,d}_{t;\epsilon}\right)\right)\\
    &\eqqcolon \mathrm{I'}+\mathrm{II'}(a)+\mathrm{II'}(b).
\end{align*}
Then, by the properties of the NN $\Phi^{1,d,d,d}_{\mathrm{mult};\frac{\epsilon}{4}}$, we obtain 
\begin{align*}
    \mathrm{I'}=M\left(\Phi^{1, d,d,d}_{\mathrm{mult};\frac{\epsilon}{4}}\right) + M_1\left(\Phi^{1,d,d,d}_{\mathrm{mult};\frac{\epsilon}{4}}\right)&\leq C_{\mathrm{sq}}  d^3\cdot \left(\log_2(1/\epsilon)+\log_2(d)+2 \right) + C_{\mathrm{sq}}  d^3 \\ &= C_{\mathrm{sq}}  d^3\cdot \left(\log_2(1/\epsilon)+\log_2(d)+3 \right).
\end{align*}
Next, we estimate
\begin{align*}
     \mathrm{II'}(a)+\mathrm{II'}(b)=M\left( \Paral\left(\Phi^{1/2, d}_{2^j;\epsilon}, \Phi^{1/2, d}_{t;\epsilon}\right)\right) + M_{L\left(\Paral\left(\Phi^{1/2, d}_{2^j;\epsilon}, \Phi^{1/2, d}_{t;\epsilon}\right)\right)}\left(\Paral\left(\Phi^{1/2, d}_{2^j;\epsilon}, \Phi^{1/2, d}_{t;\epsilon}\right)\right).
\end{align*}
Without loss of generality we assume that $L\coloneqq L\left(\Phi^{1/2, d}_{t;\epsilon}\right) - L\left(\Phi^{1/2, d}_{2^j;\epsilon}\right) > 0.$ The other cases follow similarly. We have that $L\leq 2C_{\mathrm{sq}}  j\cdot \left(\log_2(1/\epsilon)+\log_2(d)+2\right)$ and, by the definition of the parallelization of two NNs with a different number of layers that  
\begin{align*}
    \mathrm{II'}(a) &= M\left(\Paral\left(\Phi^{1/2,d}_{2^j;\epsilon}, \Phi^{1/2,d}_{t;\epsilon} \right) \right)\\
    &= M\left(\Paral\left(\Phi^{\identity}_{d^2,L} \odot \Phi^{1/2,d}_{2^j;\epsilon}, \Phi^{1/2,d}_{t;\epsilon} \right) \right) \\
    &=M\left(\Phi^{\identity}_{d^2,L} \odot \Phi^{1/2,d}_{2^j;\epsilon}\right) + M\left(\Phi^{1/2,d}_{t;\epsilon} \right) \\
    &\leq M\left(\Phi^{\identity}_{d^2,L}\right)  +M_1\left( \Phi^{\identity}_{d^2,L}\right) + M_{L\left(\Phi^{1/2,d}_{2^j;\epsilon} \right)}\left(\Phi^{1/2,d}_{2^j;\epsilon}  \right) +  M\left(\Phi^{1/2,d}_{2^j;\epsilon} \right) +M\left(\Phi^{1/2,d}_{t;\epsilon} \right)\\
    &\leq 2d^2(L+1) + C_{\mathrm{sq}}  d^3 + M\left(\Phi^{1/2,d}_{t;\epsilon}\right) +M\left(\Phi^{1/2,d}_{2^j;\epsilon} \right),
\end{align*}
where we have used the definition of the parallelization for the first two equalities,  Lemma  \ref{lem:size}(b)(iii) for the third equality, Lemma  \ref{lem:size}(a)(ii) for the fourth inequality as well as the properties of $\Phi^{\identity}_{d^2,L}$ in combination with Proposition \ref{prop:Pow2}(iii) for the last inequality. Moreover, by the definition of the parallelization of two NNs with different numbers of layers, we conclude that 
\begin{align*}
    \mathrm{II'}(b)=M_{L\left(\Paral\left(\Phi^{1/2, d}_{2^j;\epsilon}, \Phi^{1/2, d}_{t;\epsilon}\right)\right)}\left(\Paral\left(\Phi^{1/2,d}_{2^j;\epsilon}, \Phi^{1/2,d}_{d;\epsilon}\right)\right) 
    &\leq d^2+C_{\mathrm{sq}}   d^3.  
\end{align*}
Combining the estimates on $\mathrm{I'}$, $\mathrm{II'}(a)$, and $\mathrm{II'}(b)$, we obtain by using the induction hypothesis that \begin{align*}
    M\left(\Phi^{1/2,d}_{k;\epsilon} \right) &\leq C_{\mathrm{sq}}  d^3\cdot \left(\log_2(1/\epsilon) + \log_2(d)+3 \right) + 2  d^2 \cdot (L+1)+d^2+C_{\mathrm{sq}}  d^3 + M\left(\Phi^{1/2,d}_{t;\epsilon}\right) + M\left(\Phi^{1/2,d}_{2^j;\epsilon}\right) \\
    &\leq C_{\mathrm{sq}}  d^3\cdot \left(\log_2(1/\epsilon)+\log_2(d)+4 \right) + 2  d^2 \cdot (L+2) + M\left(\Phi^{1/2,d}_{t;\epsilon}\right) + M\left(\Phi^{1/2,d}_{2^j;\epsilon}\right) \\
    &\leq C_{\mathrm{sq}} \cdot  (j+1)  d^3\cdot \left(\log_2(1/\epsilon)+\log_2(d)+4 \right) + 2  d^2 \cdot  (L+2)+M\left(\Phi^{1/2,d}_{t;\epsilon}\right) \\
    &\leq C_{\mathrm{sq}}\cdot (j+1)  d^3\cdot \left(\log_2(1/\epsilon)+\log_2(d)+4 \right) +2C_{\mathrm{sq}}  j   d^2\cdot \left(\log_2(1/\epsilon)+\log_2(d)+2 \right)\\ &\qquad+4d^2+M\left(\Phi^{1/2,d}_{t;\epsilon}\right) \\
    &\leq 3C_{\mathrm{sq}}\cdot(j+1)  d^3\cdot \left(\log_2(1/\epsilon)+\log_2(d)+4\right) + M\left(\Phi^{1/2,d}_{t;\epsilon}\right) \\
    &\leq 3C_{\mathrm{sq}}  d^3\cdot \left(j+1+ \frac{j\cdot(j+1)}{2}\right) \cdot \left( \log_2(1/\epsilon) +\log_2(d)+4)\right) \\
    &= \frac{3}{2}  C_{\mathrm{sq}}\cdot (j+1)\cdot(j+2)  d^3\cdot \left(\log_2(1/\epsilon)+\log_2(d)+4 \right).
\end{align*}
\end{proof}

Proposition \ref{prop:PowGeneral} only provides a construction of a NN the ReLU-realization of which emulates a power of a matrix $\mathbf{A}$, under the assumption that $\|\mathbf{A}\|_2\leq 1/2$. We remove this restriction in the following corollary by presenting a construction of a NN $\Phi^{Z, d}_{k;\epsilon}$ the ReLU-realization of which approximates the map $\mathbf{A}\mapsto \mathbf{A}^k,$ on the set of all matrices $\mathbf{A}$ the norms of which are bounded by an arbitrary $Z>0$.
\begin{corollary}\label{cor:pow}
 There exists a universal constant $C_{\mathrm{pow}}>C_{\mathrm{sq}}$ such that for all $Z>0,$ $d\in \N$ and $k\in \N_0,$ 
there exists some NN  $\Phi^{Z,d}_{k;\epsilon}$ with the following properties:
 \begin{enumerate}[(i)]
    \item $L\left(\Phi^{Z, d}_{k;\epsilon}\right) \leq C_{\mathrm{pow}} \log_2\left(\max\{k,2\}\right) \cdot \left( \log_2(1/\epsilon)+\log_2(d)+k \log_2\left(\max\left\{1,Z\right\} \right) \right)
    $,
    \item $M\left(\Phi^{Z,d}_{k;\epsilon}\right) \leq C_{\mathrm{pow}}\log_2^2\left(\max\{k,2\}\right) d^3 \cdot \left( \log_2(1/\epsilon)+\log_2(d)+k \log_2\left(\max\left\{1,Z\right\} \right) \right)$,
    \item $M_1\left(\Phi^{Z,d}_{k;\epsilon} \right) \leq C_{\mathrm{pow}} \log_2\left(\max\{k,2\}\right)  d^3, \qquad \text{as well as} \qquad  M_{L\left(\Phi^{Z,d}_{k;\epsilon}\right)}\left( \Phi^{Z}_{k;\epsilon}\right)\leq C_{\mathrm{pow}} d^3$,
    \item $\sup_{\vect(\mathbf{A})\in  K^{Z}_d}\left\|\mathbf{A}^k- \mathbf{matr}\left(\Realization_{\varrho}^{K^{Z}_d}\left(\Phi^{Z,d}_{k;\epsilon}\right)(\vect(\mathbf{A}))\right)\right\|_2 \leq \epsilon$,
    \item for any $\vect(\mathbf{A})\in  K^{Z}_d$ we have 
    \[
    \left\| \mathbf{matr}\left(\Realization_{\varrho}^{K^{Z}_d}\left(\Phi^{Z,d}_{k;\epsilon}\right)(\vect(\mathbf{A}))\right)\right\|_2 \leq \epsilon + \|\mathbf{A}^k\|_2 \leq \epsilon +\|\mathbf{A}\|_2^k.
    \]
    \end{enumerate}
\end{corollary}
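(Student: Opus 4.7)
The plan is to reduce the general bound $\|\mathbf{A}\|_2 \leq Z$ to the already handled case $\|\mathbf{A}\|_2 \leq 1/2$ of Proposition \ref{prop:PowGeneral} by an affine rescaling that is absorbed into the first and last layers of the network. Set $c \coloneqq \max\{1, 2Z\}$, so that whenever $\vect(\mathbf{A}) \in K^Z_d$ the rescaled matrix $\tilde{\mathbf{A}} \coloneqq \mathbf{A}/c$ satisfies $\|\tilde{\mathbf{A}}\|_2 \leq 1/2$, and $\mathbf{A}^k = c^k \tilde{\mathbf{A}}^k$. If $Z \leq 1/2$ we simply take $\Phi^{Z,d}_{k;\epsilon} \coloneqq \Phi^{1/2,d}_{k;\epsilon}$ and the claim follows verbatim from Proposition \ref{prop:PowGeneral}, so from here on assume $Z > 1/2$ and hence $c = 2Z$.

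Choose the working accuracy $\epsilon' \coloneqq \min\{\epsilon/c^k,\, 1/8\}$ and let $\Psi \coloneqq \Phi^{1/2,d}_{k;\epsilon'} = ((\mathbf{A}_1,\mathbf{b}_1),\dots,(\mathbf{A}_L,\mathbf{b}_L))$ be the network supplied by Proposition \ref{prop:PowGeneral}. I would then define
\[
  \Phi^{Z,d}_{k;\epsilon} \coloneqq \bigl( (c^{-1}\mathbf{A}_1,\mathbf{b}_1),\, (\mathbf{A}_2,\mathbf{b}_2),\,\dots,\,(\mathbf{A}_{L-1},\mathbf{b}_{L-1}),\, (c^k\mathbf{A}_L,\, c^k\mathbf{b}_L)\bigr).
\]
Rescaling a scalar-times-matrix and a scalar-times-vector preserves the sparsity pattern, so $M_\ell(\Phi^{Z,d}_{k;\epsilon}) = M_\ell(\Psi)$ for every $\ell$, and in particular $M$, $M_1$, $M_L$, and $L$ are identical to those of $\Psi$. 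Moreover, since $\varrho$ is positively homogeneous, one checks inductively through the layers that
\[
  \Realization_\varrho^{K^Z_d}\bigl(\Phi^{Z,d}_{k;\epsilon}\bigr)(\vect(\mathbf{A})) \;=\; c^k\,\Realization_\varrho^{K^{1/2}_d}(\Psi)(\vect(\mathbf{A}/c)).
\]
This identity immediately yields (iv) and (v): by Proposition \ref{prop:PowGeneral}(iv)
\[
  \bigl\|\mathbf{A}^k - \mathbf{matr}(\Realization_\varrho(\Phi^{Z,d}_{k;\epsilon})(\vect(\mathbf{A})))\bigr\|_2 = c^k \bigl\|\tilde{\mathbf{A}}^k - \mathbf{matr}(\Realization_\varrho(\Psi)(\vect(\tilde{\mathbf{A}})))\bigr\|_2 \leq c^k \epsilon' \leq \epsilon,
\]
and analogously the norm bound $c^k(\epsilon' + \|\tilde{\mathbf{A}}\|_2^k) \leq \epsilon + \|\mathbf{A}\|_2^k$ gives (v).

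For the size estimates (i)--(iii), observe that $\log_2(1/\epsilon') \leq \log_2(1/\epsilon) + k\log_2(c) + 3$ and $\log_2(c) \leq 1 + \log_2(\max\{1,Z\})$. Substituting into the bounds of Proposition \ref{prop:PowGeneral}(i)--(iii) and absorbing absolute constants into a single $C_{\mathrm{pow}} > C_{\mathrm{sq}}$ produces precisely the polylogarithmic-in-$\epsilon$, cubic-in-$d$, linear-in-$k\log_2(Z)$ estimates of the corollary; the statements (iii) are unchanged because the rescaling does not affect sparsity. The only real point of care — and the one I expect to require the most attention in the write-up — is the honest verification that the scalar rescalings can indeed be absorbed into $\mathbf{A}_1$ and $(\mathbf{A}_L,\mathbf{b}_L)$ without touching intermediate layers (which relies on $\varrho$'s positive homogeneity, together with the fact that the first layer acts linearly on the input before the nonlinearity is applied), and the bookkeeping needed to roll the extra additive $k\log_2(c)$ term inside the bounds into the stated form with the single universal constant $C_{\mathrm{pow}}$.
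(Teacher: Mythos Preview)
Your proposal is correct and follows essentially the same route as the paper's proof: rescale the first layer by $1/(2Z)$ and the last layer by $(2Z)^k$, then invoke Proposition~\ref{prop:PowGeneral} with a suitably shrunk accuracy. You are in fact slightly more careful than the paper, explicitly handling the case $Z\leq 1/2$ and capping $\epsilon'$ to stay within the hypothesis $\epsilon'\in(0,1/4)$ of Proposition~\ref{prop:PowGeneral}; one minor remark is that positive homogeneity of $\varrho$ is not actually needed here, since the scalar rescalings sit entirely before the first nonlinearity and after the last affine map.
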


\begin{proof}
Let $\left((\mathbf{A}_1,\mathbf{b}_1),...,(\mathbf{A}_L,\mathbf{b}_L) \right) \coloneqq \Phi^{1/2,d}_{k;\frac{\epsilon}{2\max\left\{1,Z^k\right\}}}$ according to Proposition \ref{prop:PowGeneral}. Then the NN 
\[
    \Phi^{Z,d}_{k;\epsilon} \coloneqq \left(\left(\frac{1}{2Z}\mathbf{A}_1 ,\mathbf{b}_1\right), (\mathbf{A}_2,\mathbf{b}_2),...,(\mathbf{A}_{L-1},\mathbf{b}_{L-1}), \left( 2Z^k \mathbf{A}_L,2Z^k \mathbf{b}_L\right) \right)
\]
fulfills all of the desired properties.
\end{proof}

We have seen how to construct a NN that takes a matrix as an input and computes a power of this matrix. With this tool at hand, we are now ready to prove Theorem \ref{thm:Inverse}. 

\begin{proof}[Proof of Theorem \ref{thm:Inverse}]
By the properties of the partial sums of the Neumann series, for $m\in \N$ and every $\vect(\mathbf{A})\in K_d^{1-\delta},$ we have that  
\begin{align*}
    \left\|\left(\mathbf{Id}_{\R^{d}}-\mathbf{A}\right)^{-1}- \sum_{k=0}^{m}\mathbf{A}^k\right\|_2 &=\left\|\left(\mathbf{Id}_{\R^{d}}-\mathbf{A}\right)^{-1}  \mathbf{A}^{m+1}\right\|_2 \leq \left\|\left(\mathbf{Id}_{\R^{d}}-\mathbf{A}\right)^{-1}\right\|_2  \|\mathbf{A}\|_2^{m+1} \\
    &\leq \frac{1}{1-(1-\delta)}\cdot (1-\delta)^{m+1} = \frac{(1-\delta)^{m+1}}{\delta}.
\end{align*}
Hence, for 
\[
m(\epsilon,\delta) = \left\lceil\log_{1-\delta}(2)  \log_2\left(\frac{\epsilon\delta}{2}\right)\right\rceil = \left\lceil \frac{\log_2(\epsilon)+\log_2(\delta)-1}{\log_{2}(1-\delta)}\right\rceil \geq \frac{\log_2(\epsilon)+\log_2(\delta)-1}{\log_{2}(1-\delta)}
\]
we obtain
\begin{align*}
    \left\|\left(\mathbf{Id}_{\R^{d}}-\mathbf{A}\right)^{-1}- \sum_{k=0}^{m(\epsilon,\delta)}\mathbf{A}^k\right\|_2 \leq \frac{\epsilon}{2}.
\end{align*}

Let now
 \begin{align*}
    &\left((\mathbf{A}_1,\mathbf{b}_1),...,(\mathbf{A}_L,\mathbf{b}_L) \right) \\
    &\coloneqq \left(\left(\left(\mathbf{Id}_{\R^{d^2}}|...|\mathbf{Id}_{\R^{d^2}}\right),\mathbf{0}_{\R^{d^2}} \right)\right) \odot \Paral\left(\Phi^{1,d}_{1;\frac{\epsilon}{2\left(m(\epsilon,\delta)-1\right)}},..., \Phi^{1,d}_{ m(\epsilon,\delta);\frac{\epsilon}{2\left(m(\epsilon,\delta)-1\right)}}\right)\conc \left(\left(  \begin{pmatrix} \mathbf{Id}_{\R^{d^2}} \\ \vdots \\ \mathbf{Id}_{\R^{d^2}}  \end{pmatrix} , \mathbf{0}_{\R^{2m(\epsilon,\delta)d^2}}\right)\right),    
 \end{align*}
where $\left(\mathbf{Id}_{\R^{d^2}}|...|\mathbf{Id}_{\R^{d^2}}\right)\in \R^{ d^2\times m(\epsilon,\delta)\cdot d^2}$. Then we set
\[
\Phi^{1-\delta,d}_{\mathrm{inv};\epsilon} \coloneqq \left((\mathbf{A}_1,\mathbf{b}_1),...,\left(\mathbf{A}_L,\mathbf{b}_L+\vect\left(\mathbf{Id}_{\R^{d}}\right)\right) \right). \]
We have for any $\vect(\mathbf{A})\in K^{1-\delta}_{d}$
\begin{align*}
     & \left\|\left(\mathbf{Id}_{\R^{d}}-\mathbf{A} \right)^{-1} - \mathbf{matr}\left( \Realization_\varrho^{K^{1-\delta}_{d}}\left(\Phi^{1-\delta,d}_{\mathrm{inv};\epsilon}\right)(\vect(\mathbf{A}))\right)   \right\|_2 \\ 
     &\leq \left\|\left(\mathbf{Id}_{\R^{d}}-\mathbf{A}\right)^{-1}- \sum_{k=0}^{m(\epsilon,\delta)}\mathbf{A}^k\right\|_2 +  \left\|  \sum_{k=0}^{m(\epsilon,\delta)}\mathbf{A}^k - \mathbf{matr}\left( \Realization_\varrho^{K^{1-\delta}_{d}}\left(\Phi^{1-\delta,d}_{\mathrm{inv};\epsilon}\right)(\vect(\mathbf{A}))\right)  \right\|_2 \\ 
     &\leq \frac{\epsilon}{2} + \sum_{k=2}^{m(\epsilon,\delta)} \left\|\mathbf{A}^k- \mathbf{matr}\left(\Realization_\varrho^{K^{1-\delta}_{d}}\left(\Phi^{1,d}_{k;\frac{\epsilon}{2\left(m(\epsilon,\delta)-1\right)}}\right)(\vect(\mathbf{A})) \right) \right\|_2 \\
     &\leq \frac{\epsilon}{2} + \left(m(\epsilon,\delta)-1\right)  \frac{\epsilon}{2(m(\epsilon,\delta)-1)}  = \epsilon,
\end{align*}
where we have used that 
\begin{align*}
    \left\| \mathbf{A}-\mathbf{matr}\left(\Realization_\varrho^{K^{1-\delta}_{d}}\left(\Phi^{1, d}_{1;\frac{\epsilon}{2\left(m(\epsilon,\delta)-1\right)}}\right)(\vect(\mathbf{A})) \right) \right\|_2 = 0.
\end{align*}
This completes the proof of (iii). Moreover, (iv) is a direct consequence of (iii). Now we analyze the size of the resulting NN. First of all, we have by Lemma \ref{lem:size}(b)(i) and Corollary \ref{cor:pow} that
\begin{align*}
    L\left(\Phi^{1-\delta, d}_{\mathrm{inv};\epsilon} \right) &= \max_{k=1,...,m(\epsilon,\delta)} L\left(\Phi^{1, d}_{k;\frac{\epsilon}{2\left(m(\epsilon,\delta)-1\right)}}\right) \\
&\leq C_{\mathrm{pow}}  \log_2\left(m(\epsilon,\delta)-1 \right)\cdot  \left(\log_2\left( 1/\epsilon\right)+1+\log_2\left(m(\epsilon,\delta)-1 \right)+\log_2(d)\right) \\
&\leq C_{\mathrm{pow}}  \log_2\left(\frac{\log_2\left(0.5  \epsilon\delta\right)}{\log_{2}(1-\delta)} \right)\cdot  \left(\log_2\left(1/ \epsilon\right)+1+\log_2\left(\frac{\log_2\left(0.5  \epsilon\delta\right)}{\log_{2}(1-\delta)} \right)+\log_2(d)\right), 
\end{align*}  
which implies (i). Moreover, by Lemma \ref{lem:size}(b)(ii), Corollary \ref{cor:pow} and the monotonicity of the logarithm, we obtain

\begin{align*}
M\left(\Phi^{1-\delta, d}_{\mathrm{inv};\epsilon}\right) &\leq 3\cdot \left(\sum_{k=1}^{m(\epsilon,\delta)} M\left(\Phi^{1,d}_{k;\frac{\epsilon}{2\left(m(\epsilon,\delta)-1\right)}} \right) \right)\\ 
&\qquad+4C_{\mathrm{pow}}  m(\epsilon,\delta) d^2   \log_2\left(m(\epsilon,\delta)\right)\cdot  \left(\log_2\left(1/ \epsilon\right)+1+\log_2\left(m(\epsilon,\delta) \right)+\log_2(d)\right)\\
&\leq 3 C_{\mathrm{pow}}\cdot \left(\sum_{k=1}^{m(\epsilon,\delta)}\log_2^2(\max\{k,2\})\right)   d^3  \cdot \left(\log_2(1/\epsilon)+1+\log_2\left(m(\epsilon,\delta)\right)+ \log_2(d) \right) \\
&\qquad +5  m(\epsilon,\delta) d^2  C_{\mathrm{pow}}  \log_2\left(m(\epsilon,\delta)\right)\cdot  \left(\log_2\left(1/ \epsilon\right)+1+\log_2\left(m(\epsilon,\delta) \right)+\log_2(d)\right) \eqqcolon \mathrm{I}.
\end{align*}

Since $\sum_{k=1}^{m(\epsilon,\delta)}\log_2^2(\max\{k,2\})\leq m(\epsilon,\delta)\log_2^2(m(\epsilon,\delta)),$ we obtain for some constant  $C_{\mathrm{inv}}>C_{\mathrm{pow}}$ that 
\begin{align*}
    \mathrm{I} \leq C_{\mathrm{inv}} m(\epsilon,\delta) \log_2^2(m(\epsilon,\delta))d^3\cdot \left(\log_2(1/\epsilon) + \log_2\left(m(\epsilon,\delta)\right) + \log_2(d) \right).
\end{align*}
This completes the proof.
\end{proof}

\section{Proof of Theorem \ref{thm:NNCoefficientApproximation}} \label{app:PDENNProofs}
We start by establishing a bound on $ \left\|\mathbf{Id}_{\R^{d(\epsilontilde)}}-\alpha \mathbf{B}_{y,\epsilontilde}^{\mathrm{rb}} \right\|_2.$

\begin{proposition}\label{prop:IndependentAlpha}
For any  $\alpha \in \left(0, {1}/{C_{\mathrm{cont}}}\right)$ and $\delta\coloneqq \alpha C_{\mathrm{coer}}\in (0, 1)$ there holds
\begin{align*}
    \left\|\mathbf{Id}_{\R^{d(\epsilontilde)}}-\alpha \mathbf{B}_{y,\epsilontilde}^{\mathrm{rb}} \right\|_2 \leq 1-\delta <1, \quad \text{ for all } y\in \Ycal,~\epsilontilde>0.
\end{align*}
\begin{proof}
    Since $ \mathbf{B}_{y,\epsilontilde}^{\mathrm{rb}}$ is symmetric, there holds that 
    \begin{align*}
         \left\|\mathbf{Id}_{\R^{d(\epsilontilde)}}-\alpha \mathbf{B}_{y,\epsilontilde}^{\mathrm{rb}} \right\|_2 = \max_{\mu \in \sigma \left(\mathbf{B}_{y,\epsilontilde}^{\mathrm{rb}}\right)} \left|1-\alpha \mu \right| 
         \leq \max_{\mu \in [C_{\mathrm{coer}},C_{\mathrm{cont}}]}  \left| 1-\alpha \mu \right| 
         =1-\alpha C_{\mathrm{coer}} = 1-\delta<1,
    \end{align*}
for all $y\in \Ycal,~\epsilontilde>0.$
\end{proof}
\end{proposition}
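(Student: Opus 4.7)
The plan is to reduce the spectral-norm estimate on $\mathbf{Id}_{\R^{d(\epsilontilde)}} - \alpha \mathbf{B}_{y,\epsilontilde}^{\mathrm{rb}}$ to a one-variable estimate on the spectrum of $\mathbf{B}_{y,\epsilontilde}^{\mathrm{rb}}$, exploiting that the hypotheses of Assumption \ref{ass:opeq} force $\mathbf{B}_{y,\epsilontilde}^{\mathrm{rb}}$ to be symmetric positive definite with a uniformly controlled spectrum.

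First, I would observe that the symmetry hypothesis on $b_y$ in Assumption \ref{ass:opeq} gives $\mathbf{B}_{y,\epsilontilde}^{\mathrm{rb}} = \mathbf{V}_\epsilontilde^T \mathbf{B}_{y,\epsilontilde}^{\mathrm{h}} \mathbf{V}_\epsilontilde$ symmetric, and hence so is $\mathbf{Id}_{\R^{d(\epsilontilde)}} - \alpha \mathbf{B}_{y,\epsilontilde}^{\mathrm{rb}}$. Because a real symmetric matrix has spectral norm equal to its largest eigenvalue in absolute value, it suffices to bound $\max_{\mu \in \sigma(\mathbf{B}_{y,\epsilontilde}^{\mathrm{rb}})} |1 - \alpha \mu|$.

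Next, I would invoke the bounds \eqref{eq:NormStiffness}: $C_{\mathrm{coer}} \leq \|\mathbf{B}_{y,\epsilontilde}^{\mathrm{rb}}\|_2 \leq C_{\mathrm{cont}}$ and $1/C_{\mathrm{cont}} \leq \|(\mathbf{B}_{y,\epsilontilde}^{\mathrm{rb}})^{-1}\|_2 \leq 1/C_{\mathrm{coer}}$. Combined with symmetry (and positive-definiteness inherited from coercivity), these imply
\[
    \sigma\!\left(\mathbf{B}_{y,\epsilontilde}^{\mathrm{rb}}\right) \subset [C_{\mathrm{coer}}, C_{\mathrm{cont}}] \quad \text{for all } y \in \Ycal,\ \epsilontilde > 0.
\]
Therefore the spectrum of $\mathbf{Id}_{\R^{d(\epsilontilde)}} - \alpha \mathbf{B}_{y,\epsilontilde}^{\mathrm{rb}}$ lies in the scalar image $\{1 - \alpha \mu : \mu \in [C_{\mathrm{coer}}, C_{\mathrm{cont}}]\}$, and the estimate becomes a one-dimensional affine maximization on this interval.

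Finally, I would carry out this scalar step: the assumption $\alpha < 1/C_{\mathrm{cont}}$ yields $\alpha \mu \leq \alpha C_{\mathrm{cont}} = \delta < 1$ for every $\mu$ in the interval, so $|1 - \alpha \mu| = 1 - \alpha \mu$, and the maximum on $[C_{\mathrm{coer}}, C_{\mathrm{cont}}]$ is attained at the appropriate endpoint and bounded by $1 - \delta$. Since the eigenvalue bounds from \eqref{eq:NormStiffness} are uniform in $y$ and $\epsilontilde$, the conclusion holds uniformly. There is no real obstacle here: the proof is essentially a packaging of three ingredients (symmetry of $b_y$, the a priori spectral bound \eqref{eq:NormStiffness}, and monotonicity of $\mu \mapsto 1 - \alpha \mu$) that have all been prepared in Section \ref{sec:RedBase}.
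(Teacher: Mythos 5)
Your route is exactly the paper's: use symmetry of $b_y$ to get $\bigl\|\mathbf{Id}_{\R^{d(\epsilontilde)}}-\alpha \mathbf{B}_{y,\epsilontilde}^{\mathrm{rb}}\bigr\|_2=\max_{\mu\in\sigma(\mathbf{B}_{y,\epsilontilde}^{\mathrm{rb}})}|1-\alpha\mu|$, localize the spectrum in $[C_{\mathrm{coer}},C_{\mathrm{cont}}]$ via the uniform bounds behind \eqref{eq:NormStiffness}, and finish with a one-variable estimate; no new ideas are needed beyond what Section \ref{sec:RedBase} provides, and the uniformity in $y$ and $\epsilontilde$ comes for free.

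One caution about the final scalar step, which you leave slightly vague (``attained at the appropriate endpoint and bounded by $1-\delta$''): since $\alpha\mu<1$ on the whole interval, $|1-\alpha\mu|=1-\alpha\mu$ is \emph{decreasing} in $\mu$, so the maximum over $[C_{\mathrm{coer}},C_{\mathrm{cont}}]$ sits at the \emph{left} endpoint and equals $1-\alpha C_{\mathrm{coer}}$, not $1-\alpha C_{\mathrm{cont}}=1-\delta$. Because $C_{\mathrm{coer}}\leq C_{\mathrm{cont}}$, the bound $1-\delta$ with $\delta=\alpha C_{\mathrm{cont}}$ is in general \emph{not} what this argument delivers; the proposition is obtained with $\delta\coloneqq\alpha C_{\mathrm{coer}}$ instead. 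The paper's own proof contains the same slip (it asserts the maximum equals $1-\alpha C_{\mathrm{cont}}$), and the later constructions only use that $\delta$ is a fixed positive constant independent of $y$ and $d(\epsilontilde)$, so the correction is harmless downstream; but as written, your last line asserts an inequality that fails whenever $C_{\mathrm{coer}}<C_{\mathrm{cont}}$, so you should state which endpoint maximizes and adjust $\delta$ accordingly.
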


With an approximation to the parameter-dependent stiffness matrices with respect to a RB, due to Assumption \ref{ass:ParameterStiffness}, we can next state a construction of a NN the ReLU-realization of which approximates the map $y\mapsto  \left(\mathbf{B}_{y,\epsilontilde}^{\mathrm{rb}}\right)^{-1}$. As a first step, we observe the following remark.

\begin{remark}\label{rem:ParameterStiffness}
It is not hard to see that if $ \left((\mathbf{A}^1_{\epsilontilde,\epsilon},\mathbf{b}^1_{\epsilontilde,\epsilon}),...,(\mathbf{A}^L_{\epsilontilde,\epsilon},\mathbf{b}^L_{\epsilontilde,\epsilon})\right)\coloneqq \Phi^{\mathbf{B}}_{\epsilontilde,\epsilon}$ is the NN of Assumption \ref{ass:ParameterStiffness}, then for 
\[
\Phi^{\mathbf{B},\mathbf{Id}}_{\epsilontilde,\epsilon}\coloneqq \left((\mathbf{A}^1_{\epsilontilde,\epsilon},\mathbf{b}^1_{\epsilontilde,\epsilon}),...,(- \mathbf{A}^L_{\epsilontilde,\epsilon},- \mathbf{b}^L_{\epsilontilde,\epsilon}+\vect\left(\mathbf{Id}_{\R^{d(\epsilontilde)}}) \right)\right)
\] 
we have that 
\begin{align*}
    \sup_{y\in \Ycal} \left\| \mathbf{Id}_{\R^{d(\epsilontilde)}} - \alpha \mathbf{B}^{\mathrm{rb}}_{y,\epsilontilde} -\mathbf{matr}\left(\Realization^\Ycal_{\varrho} \left(\Phi^{\mathbf{B},\mathbf{Id}}_{\epsilontilde,\epsilon} \right)(y) \right)  \right\|_2 \leq \epsilon,
\end{align*}
as well as $M\left(\Phi^{\mathbf{B},\mathbf{Id}}_{\epsilontilde,\epsilon}\right) \leq B_{M}(\epsilontilde, \epsilon) + d(\epsilontilde)^2$ and $L\left(\Phi^{\mathbf{B},\mathbf{Id}}_{\epsilontilde,\epsilon}\right) = B_{L}\left(\epsilontilde, \epsilon\right)$.
\end{remark}

Now we present the construction of the NN emulating $y\mapsto  \left(\mathbf{B}_{y,\epsilontilde}^{\mathrm{rb}}\right)^{-1}$.

\begin{proposition}\label{prop:InverseStiffnessNN}
Let $\epsilontilde\geq \epsilonhat,\epsilon\in \left(0,  \alpha/4 \cdot \min\{1,C_{\mathrm{coer}}\} \right)$ and $\epsilon'\coloneqq  3/8 \cdot \epsilon \alpha C_{\mathrm{coer}}^2 <\epsilon$. Assume that Assumption \ref{ass:ParameterStiffness} holds. We define
\begin{align*}
    \Phi^{\mathbf{B}}_{\mathrm{inv};\epsilontilde,\epsilon} \coloneqq \left(\left(\alpha\mathbf{Id}_{\R^{d(\epsilontilde)}},\mathbf{0}_{\R^{d(\epsilontilde)}}  \right) \right)  \conc \Phi^{1-\delta/2,d(\epsilontilde)}_{\mathrm{inv};\frac{\epsilon}{2\alpha}} \odot \Phi^{\mathbf{B},\mathbf{Id}}_{\epsilontilde,\epsilon'},
\end{align*}
which has $p$-dimensional input and $d(\epsilontilde)^2$- dimensional output. 

Then, there exists a constant $C_B=C_B(C_{\mathrm{coer}},C_{\mathrm{cont}} ) > 0$ such that 
\begin{enumerate}[(i)]
\item
$
L\left( \Phi^{\mathbf{B}}_{\mathrm{inv};\epsilontilde,\epsilon} \right) 
 \leq C_{B} \log_2(\log_2(1/\epsilon))\big(\log_2(1/\epsilon) + \log_2(\log_2(1/\epsilon))+\log_2(d(\epsilontilde))\big) + B_L(\epsilontilde, \epsilon'),
$
\item 
$
M\left( \Phi^{\mathbf{B}}_{\mathrm{inv};\epsilontilde,\epsilon} \right)
\leq C_{B} \log_2(1/\epsilon)  \log_2^2(\log_2(1/\epsilon)) d(\epsilontilde)^3\cdot \big(\log_2(1/\epsilon)+\log_2(\log_2(1/\epsilon))+\log_2(d(\epsilontilde)) \big) + 2 B_M\left(\epsilontilde,\epsilon'\right),
$ 
\item $
    \sup_{y\in \Ycal} \left\|\left(\mathbf{B}_{y,\epsilontilde}^{\mathrm{rb}}\right)^{-1} -\mathbf{matr}\left(\Realization_\varrho^\Ycal\left(\Phi^{\mathbf{B}}_{\mathrm{inv};\epsilontilde,\epsilon} \right)(y)\right) \right\|_2 \leq \epsilon$,
\item $
    \sup_{y\in \Ycal} \left\|\mathbf{G}^{1/2}\mathbf{V}_\epsilontilde \cdot \left(\left(\mathbf{B}_{y,\epsilontilde}^{\mathrm{rb}}\right)^{-1} -\mathbf{matr}\left(\Realization_\varrho^\Ycal\left(\Phi^{\mathbf{B}}_{\mathrm{inv};\epsilontilde,\epsilon} \right)(y)\right)\right) \right\|_2 \leq \epsilon$,
\item $\sup_{y\in \Ycal} \left\|\mathbf{matr}\left(\Realization_\varrho^\Ycal\left(\Phi^{\mathbf{B}}_{\mathrm{inv};\epsilontilde,\epsilon} \right)(y)\right) \right\|_2 \leq \epsilon + \frac{1}{C_{\mathrm{coer}}},$
\item $
    \sup_{y\in \Ycal} \left\|\mathbf{G}^{1/2}\mathbf{V}_\epsilontilde\mathbf{matr}\left(\Realization_\varrho^\Ycal\left(\Phi^{\mathbf{B}}_{\mathrm{inv};\epsilontilde,\epsilon} \right)(y)\right) \right\|_2 \leq \epsilon + \frac{1}{C_{\mathrm{coer}}}.
$ 
\end{enumerate}

\end{proposition}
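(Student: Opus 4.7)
The plan is to exploit the algebraic identity $(\mathbf{B}^{\mathrm{rb}}_{y,\epsilontilde})^{-1} = \alpha\cdot(\mathbf{Id}_{\R^{d(\epsilontilde)}} - (\mathbf{Id}_{\R^{d(\epsilontilde)}} - \alpha \mathbf{B}^{\mathrm{rb}}_{y,\epsilontilde}))^{-1}$, so that, modulo the scalar $\alpha$ and the shift to the identity, approximate inversion of $\mathbf{B}^{\mathrm{rb}}_{y,\epsilontilde}$ reduces to an application of $\Phi^{1-\delta/2,d(\epsilontilde)}_{\mathrm{inv};\epsilon/(2\alpha)}$ from Theorem \ref{thm:Inverse}. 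First I would set $\mathbf{N}(y)\coloneqq \mathbf{Id}_{\R^{d(\epsilontilde)}} - \alpha \mathbf{B}^{\mathrm{rb}}_{y,\epsilontilde}$ and $\mathbf{M}(y)\coloneqq \mathbf{matr}(\Realization^\Ycal_\varrho(\Phi^{\mathbf{B},\mathbf{Id}}_{\epsilontilde,\epsilon'})(y))$. By Proposition \ref{prop:IndependentAlpha} we have $\|\mathbf{N}(y)\|_2\leq 1-\delta$, and by Remark \ref{rem:ParameterStiffness} we have $\|\mathbf{M}(y)-\mathbf{N}(y)\|_2\leq \epsilon'$. A short computation using the restriction $\epsilon < \alpha/4\cdot\min\{1,C_{\mathrm{coer}}\}$, the definition $\epsilon' = (3/8)\epsilon\alpha C_{\mathrm{coer}}^2$, and $\delta = \alpha C_{\mathrm{cont}}$ will show $\epsilon'\leq \delta/2$, which guarantees $\|\mathbf{M}(y)\|_2\leq 1-\delta/2$, so $\vect(\mathbf{M}(y))\in K^{1-\delta/2}_{d(\epsilontilde)}$, i.e.\ the input falls in the domain on which Theorem \ref{thm:Inverse} gives guarantees.

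For the accuracy claim (iii), I would split the error with the triangle inequality into the inversion error of $\Phi^{1-\delta/2,d(\epsilontilde)}_{\mathrm{inv};\epsilon/(2\alpha)}$ applied to $\mathbf{M}(y)$, bounded by $\epsilon/(2\alpha)$ by Theorem \ref{thm:Inverse}, and a perturbation term $\|(\mathbf{Id}-\mathbf{M}(y))^{-1} - (\mathbf{Id}-\mathbf{N}(y))^{-1}\|_2$. The latter is controlled via the standard resolvent identity
\[
(\mathbf{Id}-\mathbf{M})^{-1} - (\mathbf{Id}-\mathbf{N})^{-1} = (\mathbf{Id}-\mathbf{M})^{-1}(\mathbf{M}-\mathbf{N})(\mathbf{Id}-\mathbf{N})^{-1},
\]
giving a bound of the form $(2/\delta)\cdot 1/(\alpha C_{\mathrm{coer}}) \cdot \epsilon'$ using $\|(\mathbf{Id}-\mathbf{N}(y))^{-1}\|_2 = \alpha^{-1}\|(\mathbf{B}^{\mathrm{rb}}_{y,\epsilontilde})^{-1}\|_2\leq 1/(\alpha C_{\mathrm{coer}})$ from \eqref{eq:NormStiffness}. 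Multiplication by the outer $\alpha$ and substitution of $\epsilon'$ reduces the sum to $\epsilon/2 + \epsilon/2 = \epsilon$; this constant-chase is the point that must be executed carefully, and it is the main technical obstacle. Item (v) is then immediate from (iii) and the triangle inequality combined with $\|(\mathbf{B}^{\mathrm{rb}}_{y,\epsilontilde})^{-1}\|_2\leq 1/C_{\mathrm{coer}}$, and items (iv) and (vi) follow from (iii) and (v) using $\|\mathbf{G}^{1/2}\mathbf{V}_\epsilontilde\|_2 = 1$ from \eqref{eq:NormTrafo}.

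For the complexity bounds (i) and (ii), I would invoke Lemma \ref{lem:size}(a) on the sparse concatenation and the ordinary concatenation with the scalar outer layer, together with the estimates from Theorem \ref{thm:Inverse}. Specifically, $L(\Phi^{\mathbf{B}}_{\mathrm{inv};\epsilontilde,\epsilon}) \leq L(\Phi^{1-\delta/2,d(\epsilontilde)}_{\mathrm{inv};\epsilon/(2\alpha)}) + L(\Phi^{\mathbf{B},\mathbf{Id}}_{\epsilontilde,\epsilon'})$ (the leading $(\alpha \mathbf{Id},\mathbf{0})$ layer is absorbed into the final layer of the inversion network), and, using $\delta/2 = \alpha C_{\mathrm{cont}}/2$ independent of $\epsilon$ and $d(\epsilontilde)$, the quantity $m(\epsilon/(2\alpha),\delta/2)$ from Theorem \ref{thm:Inverse} is $\mathcal{O}(\log_2(1/\epsilon))$. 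Plugging this into the log-factor bounds in Theorem \ref{thm:Inverse} yields (i); for (ii), the same combination with Lemma \ref{lem:size}(a)(ii) and Remark \ref{rem:ParameterStiffness} gives the desired $d(\epsilontilde)^3$-times-polylog bound plus an additive $2B_M(\epsilontilde,\epsilon')$ contribution from $\Phi^{\mathbf{B},\mathbf{Id}}_{\epsilontilde,\epsilon'}$, after absorbing the $d(\epsilontilde)^2$ term from Remark \ref{rem:ParameterStiffness} into the universal constant $C_B$.
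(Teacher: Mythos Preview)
Your plan mirrors the paper's proof almost exactly: same algebraic identity, same splitting via the resolvent identity, same application of Theorem~\ref{thm:Inverse} to the shifted matrix, and the same bookkeeping via Lemma~\ref{lem:size} and Remark~\ref{rem:ParameterStiffness} for the complexity bounds. The one place where your argument does not close is the bound on $\|(\mathbf{Id}-\mathbf{M}(y))^{-1}\|_2$. Using only $\|\mathbf{M}(y)\|_2\le 1-\delta/2$ and a Neumann estimate yields $\|(\mathbf{Id}-\mathbf{M}(y))^{-1}\|_2\le 2/\delta$, and after multiplying by $\alpha$ and inserting $\epsilon'=\tfrac{3}{8}\epsilon\alpha C_{\mathrm{coer}}^2$ the perturbation term becomes
\[
\alpha\cdot\frac{2}{\delta}\cdot\frac{1}{\alpha C_{\mathrm{coer}}}\cdot\epsilon'
=\frac{3\epsilon\,\alpha C_{\mathrm{coer}}}{4\delta},
\]
which is $\tfrac{3}{4}\epsilon\,C_{\mathrm{coer}}/C_{\mathrm{cont}}$ (with the paper's $\delta=\alpha C_{\mathrm{cont}}$) and in general only $\le\tfrac{3}{4}\epsilon$, not $\epsilon/2$. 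So your claimed total $\epsilon/2+\epsilon/2$ is not what your estimate delivers; you would end up with as much as $\tfrac{5}{4}\epsilon$.

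The paper fixes this by bounding the inverse of the perturbed matrix more sharply. Instead of a Neumann bound, it shows directly that the minimum singular value of $\mathbf{matr}\bigl(\Realization_\varrho^\Ycal(\Phi^{\mathbf{B}}_{\epsilontilde,\epsilon'})(y)\bigr)$ is at least $\tfrac{3}{4}\alpha C_{\mathrm{coer}}$ (via the reverse triangle inequality applied to $\alpha\mathbf{B}^{\mathrm{rb}}_{y,\epsilontilde}$ and its perturbation by $\epsilon'\le\tfrac{1}{4}\alpha C_{\mathrm{coer}}$), hence $\|(\mathbf{Id}-\mathbf{M}(y))^{-1}\|_2\le \tfrac{4}{3\alpha C_{\mathrm{coer}}}$. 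Plugging this sharper constant into the resolvent-identity bound gives exactly $\epsilon/(2\alpha)$ for the perturbation term, which after the outer factor $\alpha$ is $\epsilon/2$, matching the $\epsilon/2$ from Theorem~\ref{thm:Inverse}. In short, replace your Neumann estimate $2/\delta$ by the singular-value estimate $4/(3\alpha C_{\mathrm{coer}})$ and the rest of your argument goes through verbatim. (You should also record that $\epsilon/(2\alpha)<1/4$, needed to invoke Theorem~\ref{thm:Inverse}; this follows from $\epsilon<\alpha/4$.)
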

\begin{proof}
First of all, for all $y\in \Ycal$ the matrix  $\mathbf{matr}\left(\Realization^{\Ycal}_{\varrho}\left(\Phi^{\mathbf{B}}_{\epsilontilde,\epsilon'}\right)(y) \right)$ is invertible. This can be deduced from the fact that 
\begin{align}\label{eq:NeumannEstimate003}
    \left\|\alpha  \mathbf{B}^{\mathrm{rb}}_{y,\epsilontilde}- \mathbf{matr}\left( \Realization^{\Ycal}_{\varrho}\left(\Phi^{\mathbf{B}}_{\epsilontilde,\epsilon'}\right)(y)\right)\right\|_2 \leq \epsilon'<\epsilon \leq  \frac{\alpha \min \{1,C_{\mathrm{coer}}\}}{4} \leq  \frac{\alpha C_{\mathrm{coer}}}{4}. 
\end{align}
Indeed, we estimate
\begin{align*}
&\min_{\mathbf{z} \in \R^{d(\epsilontilde)}\setminus \{0\}} \frac{\left|\mathbf{matr}\left( \Realization^{\Ycal}_{\varrho}\left(\Phi^{\mathbf{B}}_{\epsilontilde,\epsilon'}\right)(y)\right)\mathbf{z} \right|}{|\mathbf{z}|} \\
\text{\footnotesize [Reverse triangle inequality]} \quad 
&\quad \geq \min_{\mathbf{z} \in \R^{d(\epsilontilde)}\setminus \{0\}} \frac{\left|\alpha \mathbf{B}^{\mathrm{rb}}_{y,\epsilontilde} \mathbf{z}\right|}{|\mathbf{z}|} -  \max_{\mathbf{z} \in \R^{d(\epsilontilde)}\setminus \{0\}}\frac{\left|\alpha \mathbf{B}^{\mathrm{rb}}_{y,\epsilontilde} \mathbf{z}  - \mathbf{matr}\left( \Realization^{\Ycal}_{\varrho}\left(\Phi^{\mathbf{B}}_{\epsilontilde,\epsilon'}\right)(y)\right)\mathbf{z}\right|}{|\mathbf{z}|}\\
\text{\footnotesize [Definition of $\|.\|_2$]} \quad
& \quad \geq \left(\max_{\mathbf{z} \in \R^{d(\epsilontilde)}\setminus \{0\}} \frac{|\mathbf{z}|}{\left|\alpha \mathbf{B}^{\mathrm{rb}}_{y,\epsilontilde} \mathbf{z}\right|}\right)^{-1} - \left\|\alpha \mathbf{B}^{\mathrm{rb}}_{y,\epsilontilde} - \mathbf{matr}\left( \Realization^{\Ycal}_{\varrho}\left(\Phi^{\mathbf{B}}_{\epsilontilde,\epsilon'}\right)(y)\right)\right\|_2\\
\text{\footnotesize [Set $\tilde{\mathbf{z}} \coloneqq (\alpha \mathbf{B}^{\mathrm{rb}}_{y,\epsilontilde})\mathbf{z}$]} \quad 
 & \quad \geq \left(\max_{\tilde{\mathbf{z}} \in \R^{d(\epsilontilde)}\setminus \{0\}} \frac{|(\alpha \mathbf{B}^{\mathrm{rb}}_{y,\epsilontilde})^{-1} \tilde{\mathbf{z}}|}{\left|\tilde{\mathbf{z}}\right|}\right)^{-1} - \left\|\alpha \mathbf{B}^{\mathrm{rb}}_{y,\epsilontilde} - \mathbf{matr}\left( \Realization^{\Ycal}_{\varrho}\left(\Phi^{\mathbf{B}}_{\epsilontilde,\epsilon'}\right)(y)\right)\right\|_2\\
\text{\footnotesize [Definition of $\|\cdot\|_2$]} \quad 
& \quad  \geq \left \|\left(\alpha \mathbf{B}^{\mathrm{rb}}_{y,\epsilontilde}\right)^{-1}\right\|_2^{-1} - \left\|\alpha \mathbf{B}^{\mathrm{rb}}_{y,\epsilontilde} - \mathbf{matr}\left( \Realization^{\Ycal}_{\varrho}\left(\Phi^{\mathbf{B}}_{\epsilontilde,\epsilon'}\right)(y)\right)\right\|_2\\
\text{\footnotesize [By Equations \eqref{eq:NeumannEstimate003} and \eqref{eq:NormStiffness}]}\quad &\quad  \geq \alpha C_{\mathrm{coer}} - \frac{\alpha C_{\mathrm{coer}}}{4} \geq  \frac{3}{4} \alpha C_{\mathrm{coer}}.
\end{align*}
Thus it follows that 
\begin{align} \label{eq:NormInverseStiffness}
    \left\|\left(\mathbf{matr}\left( \Realization^{\Ycal}_{\varrho}\left(\Phi^{\mathbf{B}}_{\epsilontilde,\epsilon'}\right)(y)\right)\right)^{-1} \right\|_2 \leq \frac{4 }{3}\frac{1}{C_{\mathrm{coer}}\alpha}.
\end{align}
Then
\begin{align*}
    &\left\| \frac{1}{\alpha}\left(\mathbf{B}_{y,\epsilontilde}^{\mathrm{rb}} \right)^{-1} -\mathbf{matr}\left(\Realization_\varrho^\Ycal\left(  \Phi^{1-\delta/2,d(\epsilontilde)}_{\mathrm{inv};\frac{\epsilon}{2\alpha}} \odot \Phi^{\mathbf{B},\mathbf{Id}}_{\epsilontilde,\epsilon'}\right)(y) \right)\right\|_2 
    \\ 
    &\quad \leq \left\| \frac{1}{\alpha}\left(\mathbf{B}_{y,\epsilontilde}^{\mathrm{rb}} \right)^{-1} -\left(\mathbf{matr}\left(\Realization_\varrho^\Ycal\left( \Phi^{\mathbf{B}}_{\epsilontilde,\epsilon'}\right)(y) \right)\right)^{-1}\right\|_2 
    \\
    &\quad\quad + \left\| \left(\mathbf{matr}\left(\Realization_\varrho^\Ycal\left( \Phi^{\mathbf{B}}_{\epsilontilde,\epsilon'}\right)(y) \right)\right)^{-1} -\mathbf{matr}\left(\Realization_\varrho^\Ycal\left( \Phi^{1-\delta/2,d(\epsilontilde)}_{\mathrm{inv};\frac{\epsilon}{2\alpha}} \odot \Phi^{\mathbf{B},\mathbf{Id}}_{\epsilontilde,\epsilon'}\right)(y) \right)\right\|_2 \eqqcolon \mathrm{I}+\mathrm{II}.
\end{align*}
Due to the fact that for two invertible matrices $\mathbf{M},\mathbf{N},$  
\begin{align*}
    \left\|\mathbf{M}^{-1}-\mathbf{N}^{-1} \right\|_2 = \left\| \mathbf{M}^{-1}(\mathbf{N}-\mathbf{M})\mathbf{N}^{-1}\right\|_2 \leq \|\mathbf{M}-\mathbf{N}\|_2 \|\mathbf{M}^{-1}\|_2\|\mathbf{N}^{-1}\|_2,  
\end{align*}
we obtain
\begin{align*}
    \mathrm{I} &\leq \left\|\alpha\mathbf{B}^{\mathrm{rb}}_{y,\epsilontilde}- \mathbf{matr}\left(\Realization_\varrho^\Ycal\left( \Phi^{\mathbf{B}}_{\epsilontilde,\epsilon'}\right)(y) \right)\right\|_2   \left\|\left(\alpha\mathbf{B}_{y,\epsilontilde}^{\mathrm{rb}}\right)^{-1} \right\|_2  \left\| \left(\mathbf{matr}\left(\Realization_\varrho^\Ycal\left( \Phi^{\mathbf{B}}_{\epsilontilde,\epsilon'}\right)(y) \right)\right)^{-1}\right\|_2 
    \\
    &\leq \frac{3 }{8} \epsilon\alpha C_{\mathrm{coer}}^2   \frac{1}{\alpha C_{\mathrm{coer}}}   \frac{4}{3}\frac{1}{C_{\mathrm{coer}}\alpha} = \frac{\epsilon}{2\alpha},
\end{align*}
where we have used Assumption \ref{ass:ParameterStiffness}, Equation \eqref{eq:NormStiffness} and Equation \eqref{eq:NormInverseStiffness}.
Now we turn our attention to estimating II. First, observe that for every $y\in \Ycal$  by the triangle inequality and Remark \ref{rem:ParameterStiffness}, that
\begin{align*}
     \left\|  \mathbf{matr}\left(\Realization_{\varrho}^{\Ycal}\left(\Phi^{\mathbf{B},\mathbf{Id}}_{\epsilontilde,\epsilon'}\right)(y)\right)\right\|_2 &\leq \left\|  \mathbf{matr}\left(\Realization_{\varrho}^{\Ycal}\left(\Phi^{\mathbf{B},\mathbf{Id}}_{\epsilontilde,\epsilon'}\right)(y)\right)-\left(\mathbf{Id}_{\R^{d(\epsilontilde)}}-\alpha\mathbf{B}_{y,\epsilontilde}^{\mathrm{rb}}\right)\right\|_2 + \left\|\mathbf{Id}_{\R^{d(\epsilontilde)}}-\alpha\mathbf{B}_{y,\epsilontilde}^{\mathrm{rb}}\right\|_2 \\
    &\leq  \epsilon'+1-\delta \leq 1-\delta+\frac{\alpha C_{\mathrm{coer}}}{4} \leq 1-\delta+ \frac{\alpha C_{\mathrm{cont}}}{4} \leq 1-\delta+\frac{\delta}{2}=1-\frac{\delta}{2}.
\end{align*}
Moreover, have that $\epsilon/(2\alpha) \leq \alpha/(8\alpha)< 1/4.$ Hence, by Theorem \ref{thm:Inverse}, we obtain that $  \mathrm{II} \leq {\epsilon}/{2\alpha }.$ Putting everything together yields
\begin{align*}
    \sup_{y\in \Ycal} \left\|\frac{1}{\alpha}\left(\mathbf{B}_{y,\epsilontilde}^{\mathrm{rb}} \right)^{-1} -\mathbf{matr}\left(\Realization_\varrho^\Ycal\left(  \Phi^{1-\delta/2,d(\epsilontilde)}_{\mathrm{inv};\frac{\epsilon}{2\alpha}} \odot \Phi^{\mathbf{B},\mathbf{Id}}_{\epsilontilde,\epsilon'}\right)(y) \right)\right\|_2 \leq \mathrm{I}+\mathrm{II} \leq \frac{\epsilon}{\alpha}.
\end{align*}
Finally, by construction we can conclude that
\begin{align*}
    \sup_{y\in \Ycal} \left\|\left(\mathbf{B}_{y,\epsilontilde}^{\mathrm{rb}} \right)^{-1}-   \mathbf{matr}\left(\Realization^\Ycal_{\varrho}\left(\Phi^{\mathbf{B}}_{\mathrm{inv};\epsilontilde,\epsilon}\right)(y)\right)\right\|_2 \leq \epsilon.
\end{align*}
This implies (iii) of the assertion. Now, by Equation \eqref{eq:NormTrafo} we obtain
\begin{align*}
    \sup_{y\in \Ycal}\left\|\mathbf{G}^{1/2}\mathbf{V}_\epsilontilde\left(\mathbf{B}_{y,\epsilontilde}^{\mathrm{rb}}\right)^{-1} -\mathbf{G}^{1/2}\mathbf{V}_\epsilontilde\mathbf{matr}\left(\Realization_\varrho^\Ycal\left(\Phi^{\mathbf{B}}_{\mathrm{inv};\epsilontilde,\epsilon} \right)(y)\right) \right\|_2 \leq  \left\|\mathbf{G}^{1/2}\mathbf{V}_\epsilontilde \right\|_2  \epsilon=\epsilon,
\end{align*}
completing the proof of (iv).
Finally, for all $y\in \Ycal$ we estimate 
\begin{align*}
    &\left\|\mathbf{G}^{1/2}\mathbf{V}_\epsilontilde\mathbf{matr}\left(\Realization_\varrho^\Ycal\left(\Phi^{\mathbf{B}}_{\mathrm{inv};\epsilontilde,\epsilon} \right)(y)\right) \right\|_2 \\ & \leq   \left\|\mathbf{G}^{1/2}\mathbf{V}_\epsilontilde\cdot\left(\left(\mathbf{B}_{y,\epsilontilde}^{\mathrm{rb}}\right)^{-1} -\mathbf{matr}\left(\Realization_\varrho^\Ycal\left(\Phi^{\mathbf{B}}_{\mathrm{inv};\epsilontilde,\epsilon} \right)(y)\right)\right) \right\|_2+ \left\|\mathbf{G}^{1/2}\mathbf{V}_\epsilontilde\left(\mathbf{B}_{y,\epsilontilde}^{\mathrm{rb}}\right)^{-1} \right\|_2 \\
    &\leq \epsilon + \frac{1}{C_{\mathrm{coer}}}.
\end{align*}
This yields (vi). A minor modification of the calculation above yields (v). 
At last, we show (i) and (ii). First of all, it is clear that $L\left(\Phi^{\mathbf{B}}_{\mathrm{inv};\epsilontilde,\epsilon}\right) = L\left(  \Phi^{1-\delta/2,d(\epsilontilde)}_{\mathrm{inv};\frac{\epsilon}{2\alpha}} \odot \Phi^{\mathbf{B},\mathbf{Id}}_{\epsilontilde,\epsilon'} \right)$ and $M\left(\Phi^{\mathbf{B}}_{\mathrm{inv};\epsilontilde,\epsilon}\right) = M\left(  \Phi^{1-\delta/2,d(\epsilontilde)}_{\mathrm{inv};\frac{\epsilon}{2\alpha}} \odot \Phi^{\mathbf{B},\mathbf{Id}}_{\epsilontilde,\epsilon'} \right).$  Moreover, by Lemma \ref{lem:size}(a)(i) in combination with Theorem \ref{thm:Inverse} (i) we have
\begin{align*}
    L\left( \Phi^{\mathbf{B}}_{\mathrm{inv};\epsilontilde,\epsilon} \right) &\leq  C_{\mathrm{inv}} \log_2\left(m\left(\epsilon/(2\alpha),\delta/2\right) \right) \cdot \left(\log_2\left(2\alpha/\epsilon\right)+\log_2\left(m\left(\epsilon/(2\alpha),\delta/2\right) \right)+\log_2(d(\epsilontilde))\right)+ B_L(\epsilontilde,\epsilon')
\end{align*}
and, by Lemma \ref{lem:size}(a)(ii) in combination with Theorem \ref{thm:Inverse}(ii), we obtain
\begin{align*}
    &M\left( \Phi^{\mathbf{B}}_{\mathrm{inv};\epsilontilde,\epsilon} \right) \\ &\quad \leq 2C_{\mathrm{inv}}  m(\epsilon/(2\alpha),\delta/2)\log_2^2\left(m(\epsilon/(2\alpha),\delta/2) \right)   d(\epsilontilde)^3\cdot \left(\log_2\left(2\alpha/\epsilon\right)+ \log_2\left(m(\epsilon/(2\alpha),\delta/2)\right) +\log_2(d(\epsilontilde)) \right)  \\& \quad +2d(\epsilontilde)^2 + 2B_M(\epsilontilde, \epsilon').
\end{align*}

In addition, by using the definition of $m(\epsilon,\delta)$ in the statement of Theorem \ref{thm:Inverse}, for some constant $\tilde{C}>0$ there holds $m\left(\epsilon/(2\alpha),\delta/2\right) \leq \tilde{C}\log_2(1/\epsilon).$ 
Hence, the claim follows for a suitably chosen constant $C_B = C_B(C_{\mathrm{coer}},C_{\mathrm{cont}}) > 0$.
\end{proof}

\subsection{Proof of Theorem \ref{thm:NNCoefficientApproximation}}
We start with proving (i) by deducing the estimate for $\Phi^{\mathbf{u},\mathrm{h}}_{\epsilontilde,\epsilon}$. The estimate for $\Phi^{\mathbf{u},\mathrm{rb}}_{\epsilontilde,\epsilon}$ follows in a similar, but simpler way. For $y\in \Ycal$, we have that 
\begin{align*}
    &\left|\tilde{\mathbf{u}}^{\mathrm{h}}_{y,\epsilontilde} - \Realization_\varrho^\Ycal\left( \Phi^{\mathbf{u},\mathrm{h}}_{\epsilontilde,\epsilon}\right)(y) \right|_{\mathbf{G}} 
    \\ &=\left|\mathbf{G}^{1/2}\cdot\left(\mathbf{V}_\epsilontilde\left(\mathbf{B}_{y,\epsilontilde}^{\mathrm{rb}}\right)^{-1}\mathbf{f}_{y,\epsilontilde}^{\mathrm{rb}} - \Realization_\varrho^\Ycal\left( \Phi^{\mathbf{u},\mathrm{h}}_{\epsilontilde,\epsilon}\right)(y) \right)\right| \\
    & \leq \left|\mathbf{G}^{1/2}\mathbf{V}_\epsilontilde\cdot\left(\left(\mathbf{B}_{y,\epsilontilde}^{\mathrm{rb}}\right)^{-1}\mathbf{f}_{y,\epsilontilde}^{\mathrm{rb}} -\left(\mathbf{B}_{y,\epsilontilde}^{\mathrm{rb}}\right)^{-1} \Realization_\varrho^\Ycal\left(\Phi^{\mathbf{f}}_{\epsilontilde,\epsilon''} \right)(y)\right)\right| \\
    &\quad +\left|\mathbf{G}^{1/2}\mathbf{V}_\epsilontilde\cdot \left(\left(\mathbf{B}_{y,\epsilontilde}^{\mathrm{rb}}\right)^{-1} \Realization_\varrho^\Ycal\left(\Phi^{\mathbf{f}}_{\epsilontilde,\epsilon''} \right)(y) - \mathbf{matr}\left(\Realization_\varrho^\Ycal\left(\Phi^{\mathbf{B}}_{\mathrm{inv};\epsilontilde,\epsilon'} \right)(y) \right)  \Realization_\varrho^\Ycal\left(\Phi^{\mathbf{f}}_{\epsilontilde,\epsilon''} \right)(y)\right)\right| \\
    &\quad+ \left|\mathbf{G}^{1/2}\cdot \left(\mathbf{V}_\epsilontilde\mathbf{matr}\left(\Realization_\varrho^\Ycal\left(\Phi^{\mathbf{B}}_{\mathrm{inv};\epsilontilde,\epsilon'} \right)(y) \right)  \Realization_\varrho^\Ycal\left(\Phi^{\mathbf{f}}_{\epsilontilde,\epsilon''} \right)(y) - \Realization_\varrho^\Ycal\left( \Phi^{\mathbf{u},\mathrm{h}}_{\epsilontilde,\epsilon}\right)(y) \right)\right| \eqqcolon \mathrm{I} + \mathrm{II} + \mathrm{III}.
\end{align*}
We now estimate $\mathrm{I},\mathrm{II},\mathrm{III}$ separately. By Equation \eqref{eq:NormTrafo}, Equation \eqref{eq:NormStiffness}, Assumption \ref{ass:ParameterRHS}, and the definition of $\epsilon''$ there holds for $y\in \Ycal$ that 
\begin{align*}
    \mathrm{I} &\leq \left\|\mathbf{G}^{1/2}\mathbf{V}_\epsilontilde \right\|_2   \left\|\left(\mathbf{B}_{y,\epsilontilde}^{\mathrm{rb}}\right)^{-1} \right\|_2   \left|\mathbf{f}_{y,\epsilontilde}^{\mathrm{rb}}- \Realization_\varrho^\Ycal\left(\Phi^{\mathbf{f}}_{\epsilontilde,\epsilon''} \right)(y) \right| \leq\frac{1}{C_{\mathrm{coer}}}  \frac{\epsilon  C_{\mathrm{coer}}}{3} = \frac{\epsilon}{3}.
\end{align*}
We proceed with estimating II. 
It is not hard to see from Assumption \ref{ass:ParameterRHS} that 
\begin{align}\label{rem:ParameterRHS}
    \sup_{y\in \Ycal} \left|  \Realization^{\Ycal}_{\varrho}\left(\Phi^{\mathbf{f}}_{\epsilontilde,\epsilon}\right)(y)\right|\leq \epsilon+C_{\mathrm{rhs}}.
\end{align}
By definition, $\epsilon' = \epsilon/\max\{6,C_{\mathrm{rhs}}\} \leq \epsilon.$ 
Hence, by Assumption \ref{ass:ParameterStiffness} and \eqref{rem:ParameterRHS} in combination with Proposition \ref{prop:InverseStiffnessNN} (i), 
we obtain
\begin{align*}
    \mathrm{II} &\leq \left\|\mathbf{G}^{1/2}\mathbf{V}_\epsilontilde \cdot \left(\left(\mathbf{B}_{y,\epsilontilde}^{\mathrm{rb}}\right)^{-1} - \mathbf{matr}\left(\Realization_\varrho^\Ycal\left(\Phi^{\mathbf{B}}_{\mathrm{inv};\epsilontilde,\epsilon'} \right)(y) \right)\right) \right\|_2 
      \left|\Realization_\varrho^\Ycal\left(\Phi^{\mathbf{f}}_{\epsilontilde,\epsilon''} \right)(y) \right|
     \leq  \epsilon'\cdot\left(C_{\mathrm{rhs}}+\frac{\epsilon\cdot C_{\mathrm{coer}}}{3} \right) \\
     &\leq \frac{\epsilon}{\max\{6,C_{\mathrm{rhs}}\}}  C_{\mathrm{rhs}} + \frac{\epsilon  C_{\mathrm{coer}}}{\max\{6,C_{\mathrm{rhs}}\}}  \frac{\epsilon}{3} \leq \frac{2\epsilon}{6}=\frac{\epsilon}{3},
\end{align*}
where we have used that $C_{\mathrm{coer}}  \epsilon <C_{\mathrm{coer}}  {\alpha}/{4}<1.$ Finally, we estimate III.
Per construction, we have that 
$$
\Realization_\varrho^\Ycal \left(\Phi^{\mathbf{u},\mathrm{h}}_{\epsilontilde,\epsilon}\right)(y) = \mathbf{V}_\epsilontilde  \Realization_\varrho^\Ycal\left(\Phi^{\kappa,d(\epsilontilde),d(\epsilontilde),1}_{\mathrm{mult};\frac{\epsilon}{3}} \odot \Paral\left(\Phi^{\mathbf{B}}_{\mathrm{inv};\epsilontilde,\epsilon'},\Phi^{\mathbf{f}}_{\epsilontilde,\epsilon''} \right)\right)(y,y).
$$
Moreover, we have by Proposition \ref{prop:InverseStiffnessNN}(v)
\begin{align*}
    \left\|\mathbf{matr}\left(\Realization_\varrho^\Ycal\left(\Phi^{\mathbf{B}}_{\mathrm{inv};\epsilontilde,\epsilon'} \right)(y) \right) \right\|_2 \leq \epsilon + \frac{1}{C_{\mathrm{coer}}} \leq 1+\frac{1}{C_{\mathrm{coer}}}\leq \kappa
\end{align*}
and by \eqref{rem:ParameterRHS} that
\begin{align*}
     \left|\Realization_\varrho^\Ycal\left(\Phi^{\mathbf{f}}_{\epsilontilde,\epsilon''} \right)(y) \right| \leq \epsilon'' + C_{\mathrm{rhs}} \leq \epsilon   C_{\mathrm{coer}}+  C_{\mathrm{rhs}} \leq 1+ C_{\mathrm{rhs}} \leq \kappa.
\end{align*}
Hence, by the choice of $\kappa$ and Proposition \ref{prop:Multiplikation} we conclude that $\mathrm{III} \leq {\epsilon}/{3}$. Combining the estimates on $\mathrm{I}, \mathrm{II}$, and $\mathrm{III}$ yields (i) and using (i) implies (v). Now we estimate the size of the NNs. We start with proving (ii). First of all, we have by the definition of $\Phi^{\mathbf{u}, \mathrm{rb}}_{\epsilontilde,\epsilon}$ and $\Phi^{\mathbf{u}, \mathrm{h}}_{\epsilontilde,\epsilon}$ as well as Lemma \ref{lem:size}(a)(i) in combination with Proposition \ref{prop:Multiplikation} that
\begin{align}
L\left(\Phi^{\mathbf{u}, \mathrm{rb}}_{\epsilontilde,\epsilon}\right)&< L\left(\Phi^{\mathbf{u},\mathrm{h}}_{\epsilontilde,\epsilon}\right) \leq 1 + L\left( \Phi^{\kappa,d(\epsilontilde),d(\epsilontilde),1}_{\mathrm{mult};\frac{\epsilon}{3}}\right)+ L\left(\Paral\left(\Phi^{\mathbf{B}}_{\mathrm{inv};\epsilontilde,\epsilon'},\Phi^{\mathbf{f}}_{\epsilontilde,\epsilon''} \right)\right)\nonumber\\
&\leq 1 +C_{\mathrm{mult}}\cdot \left(\log_2(3/\epsilon)+3/2 \log_2(d(\epsilontilde))+\log_2(\kappa)\right)+\max\left\{L\left(\Phi^{\mathbf{B}}_{\mathrm{inv};\epsilontilde,\epsilon'} \right), F_L\left(\epsilontilde,\epsilon''\right) \right\}\nonumber\\
& \leq C^{\mathbf{u}}_L \max\left\{ \log_2(\log_2(1/\epsilon))\left(\log_2(1/\epsilon)+ \log_2(\log_2(1/\epsilon))+\log_2(d(\epsilontilde))\right) + B_L(\epsilontilde, \epsilon'''), F_L\left(\epsilontilde,\epsilon''\right)\right\} \label{eq:CLDef}
\end{align}
where we applied Proposition \ref{prop:InverseStiffnessNN}(i) and chose a suitable constant $$C^{\mathbf{u}}_L = C^{\mathbf{u}}_L(\kappa, \epsilon', C_B) = C^{\mathbf{u}}_L(C_{\mathrm{rhs}}, C_{\mathrm{coer}},C_{\mathrm{cont}}) >0.$$ 

We now note that if we establish (iii), then (iv) follows immediately by Lemma \ref{lem:size}(a)(ii).
Thus, we proceed with proving (iii). First of all, by Lemma \ref{lem:size}(a)(ii) in combination with Proposition \ref{prop:Multiplikation} we have 
\begin{align*}
    M\left(\Phi^{\mathbf{u}, \mathrm{rb}}_{\epsilontilde,\epsilon}\right) &\leq 2M\left( \Phi^{\kappa,d(\epsilontilde),d(\epsilontilde),1}_{\mathrm{mult};\frac{\epsilon}{3}}\right)+ 2M\left(\Paral\left(\Phi^{\mathbf{B}}_{\mathrm{inv};\epsilontilde,\epsilon'},\Phi^{\mathbf{f}}_{\epsilontilde,\epsilon''} \right)\right)\\
&\leq 2C_{\mathrm{mult}}d(\epsilontilde)^2 \cdot \left(\log_2(3/\epsilon)+3/2 \log_2(d(\epsilontilde))+\log_2(\kappa)\right)+ 2M\left(\Paral\left(\Phi^{\mathbf{B}}_{\mathrm{inv};\epsilontilde,\epsilon'},\Phi^{\mathbf{f}}_{\epsilontilde,\epsilon''} \right) \right).
\end{align*}
Next, by Lemma \ref{lem:size}(b)(ii) in combination with Proposition \ref{prop:InverseStiffnessNN} as well as Assumption \ref{ass:ParameterStiffness} and Assumption \ref{ass:ParameterRHS} we have that 
\begin{align}
    &M\left(\Paral\left(\Phi^{\mathbf{B}}_{\mathrm{inv};\epsilontilde,\epsilon'},\Phi^{\mathbf{f}}_{\epsilontilde,\epsilon''} \right)\right)\nonumber\\ 
    &\leq M\left( \Phi^{\mathbf{B}}_{\mathrm{inv};\epsilontilde,\epsilon'}\right) + M\left(\Phi^{\mathbf{f}}_{\epsilontilde,\epsilon''} \right) \nonumber\\ 
    &\quad+ 8d(\epsilontilde)^2 \max\left\{C^{\mathbf{u}}_L \log_2(\log_2(1/\epsilon'))\left(\log_2(1/\epsilon') + \log_2(\log_2(1/\epsilon'))+\log_2(d(\epsilontilde))\right) + B_L(\epsilontilde, \epsilon'''), F_L\left(\epsilontilde,\epsilon''\right)\right\}  \nonumber\\
    &\leq C_{B}  \log_2(1/\epsilon')  \log_2^2(\log_2(1/\epsilon'))   d(\epsilontilde)^3\cdot \left(\log_2(1/\epsilon')+\log_2(\log_2(1/\epsilon'))+\log_2(d(\epsilontilde)) \right)  \nonumber \\ 
    &\quad+ 8d(\epsilontilde)^2 \max\left\{C^{\mathbf{u}}_L \log_2(\log_2(1/\epsilon'))\left(\log_2(1/\epsilon') + \log_2(\log_2(1/\epsilon'))+\log_2(d(\epsilontilde))\right) + B_L(\epsilontilde, \epsilon'''),\nonumber F_L\left(\epsilontilde,\epsilon''\right)\right\}  
    \\ 
    &\quad + 2B_M\left(\epsilontilde,\epsilon'''\right) + F_M\left(\epsilontilde,\epsilon'' \right) \nonumber
    \\
    &\leq C^{\mathbf{u}}_M   d(\epsilontilde)^2 \cdot \bigg(d(\epsilontilde)\log_2(1/\epsilon)  \log_2^2(\log_2(1/\epsilon))\big(\log_2(1/\epsilon)+\log_2(\log_2(1/\epsilon)) + \log_2(d(\epsilontilde))\big)... \nonumber 
    \\  &\qquad \qquad \qquad \qquad \qquad \qquad \qquad \qquad \qquad  ...+ B_L(\epsilontilde,\epsilon''') + F_L\left(\epsilontilde,\epsilon''\right)\bigg) \quad   +2B_M(\epsilontilde,\epsilon''') +F_M\left(\epsilontilde,\epsilon''\right), \label{eq:CMDef}
\end{align}
for a suitably chosen constant $C^{\mathbf{u}}_M 
= C^{\mathbf{u}}_M(\epsilon', C_B, C^{\mathbf{u}}_L) = C^{\mathbf{u}}_L(C_{\mathrm{rhs}}, C_{\mathrm{coer}},C_{\mathrm{cont}})> 0$.
This shows the claim.

\end{document}